\documentclass{elsarticle}
\makeatletter
\def\ps@pprintTitle{%
 \let\@oddhead\@empty
 \let\@evenhead\@empty
 \def\@oddfoot{\centerline{\thepage}}%
 \let\@evenfoot\@oddfoot}
\makeatother

\usepackage{hyperref}
\usepackage{times}
\usepackage{amssymb}
\usepackage{epsfig}
\usepackage{graphicx}
\usepackage{amsmath}
\usepackage{amsthm}
\usepackage{xypic}

\usepackage{bbm}
\usepackage{multirow}
\allowdisplaybreaks

\newtheorem{defi}{Definition}
\newtheorem{thm}{Theorem}
\newtheorem{proposition}{Proposition}

\newtheorem{lemma}{Lemma}
\newtheorem{remark}{Remark}
\usepackage
{todonotes}

\bibliographystyle{abbrv}

\begin{document}

\begin{frontmatter}

\title{Comparing Curves in Homogeneous Spaces}

\author{Zhe Su\quad Eric Klassen\quad Martin Bauer}
\address{Florida State University, Department of Mathematics, Tallahassee, FL}
\ead{zsu@math.fsu.edu, klassen@math.fsu.edu, bauer@math.fsu.edu}

\begin{abstract}
Of concern is the study of the space of curves in homogeneous spaces. Motivated by applications in shape analysis we identify two curves if they only 
differ by their parametrization and/or a rigid motion. For curves in Euclidean space the Square-Root-Velocity-Function (SRVF) allows to define and efficiently compute a distance on this infinite dimensional quotient space. In this article we present a generalization of the SRVF to curves in homogeneous spaces.
We prove that, under mild conditions on the curves, there always exist optimal reparametrizations realizing the quotient distance and demonstrate the efficiency of our framework in selected numerical examples.
\end{abstract}

\begin{keyword}
elastic metric, homogeneous spaces, SRVF, shape analysis, curves
\MSC[2010] 00-01\sep  99-00
\end{keyword}

\end{frontmatter}


\section{Introduction}

Comparing shapes of curves is a topic of intrinsic interest and, in addition, it is of relevance in many applications in the broad area of shape analysis \cite{Younes2010,SrKl2016,BaBrMi2014}. Usually the notion of ``shape" means comparing curves without regard to rigid motions or reparametrizations. Thus, it implies modding out the space of parametrized curves by the group of rigid motions, and/or the group of reparametrizations. We might be interested in curves in a flat Euclidean space (for example, the outline of an image in a photograph), or we might be interested in curves that lie on a space that is itself curved (for example, hurricane tracks on the surface of the earth or paths of positive definite symmetric matrices in brain connectivity analysis). This paper is primarily concerned with the second of these two cases.

To outline our approach to this problem, let $\mathcal{P}([0,1], M)$ denote the set of para\-metrized curves in a Riemannian manifold $M$. Thinking of $\mathcal{P}([0,1], M)$ as an infinite dimensional manifold, we wish to equip it with a Riemannian metric that is invariant under the group of rigid motions of $M$, and under the group of reparametrizations. In this way, we can induce a metric on the quotient of $\mathcal{P}([0,1], M)$ by either, or both, of these groups. This will allow us to quantify difference between  shapes of curves by calculating the length of the shortest geodesic joining them in the quotient space. We can also perform statistical analyses on sets of curves by using  techniques of non-linear statistics on this quotient manifold. 

For the case $M=\mathbb R^n$ several metrics have been defined satisfying the required invariances, see, e.g., \cite{BaBrMi2014,MiMu2007,SrKlJoJe2011,Sh2008,YeMe2005,MeYeSu2008} and the references therein. The main goal of this paper is to take a particularly useful one of these metrics, the elastic metric associated with the ``square root velocity function" (SRVF), and generalize it to curves in a homogeneous manifold $M$. (A homogeneous manifold is a quotient of a Lie group by a compact subgroup.) \\

\noindent
{\bf Previous work on curves in $\mathbb R^n$}
In \cite{MiMu2006,MiMu2005,BaBrHaMi2012}, Michor and Mumford showed that the simplest reparametrization invariant $L^2$-metric on $\mathcal{P}([0,1], \mathbb R^n)$ is an inadequate choice for shape analysis as it results in vanishing geodesic distance, i.e., for any two curves $c_1,c_2\in\mathcal{P}([0,1], \mathbb R^n)$ there exist paths of arbitrarily short length connecting them. Subsequently it has been shown in \cite{MiMu2007} that this degeneracy can be overcome by adding higher order derivatives in the definition of the metric, yielding to the class of reparametrization invariant Sobolev metrics. While this class of metrics allows one to prove strong theoretical results \cite{BrMiMu2014}, it can be difficult to calculate the corresponding minimizing geodesics and thus obtain the distance function on the shape space of curves. (See also the recent article on  a numerical framework for general second order Sobolev metrics \cite{BauerAppl2}.)

For planar curves (i.e., $M=\mathbb R^2$), Younes et al. \cite{YoMiShMu2008,Yo1998} consider a specific first order Sobolev metric, that gives rise to an efficient method for calculating geodesics in the space of parametrized curves. Their methods are, however, very specific to $\mathbb R^2$. 

In \cite{MiSrJo2007}, Mio et al. considered a family of ``elastic metrics" on the space of planar curves. Intuitively, this family allows one to attach different weights to perturbations in the tangent direction (``stretching") and in the normal direction (``bending"). A precise formula for this metric is given by 
\begin{equation}
G_c(v_1, v_2)=\int_0^1a^2\langle D_sv_1^\bot,D_sv_2^\bot\rangle+b^2\langle D_sv_1^\top,D_sv_2^\top\rangle ds,
\end{equation}
where $c:[0,1]\to\mathbb R^2$ is a parametrized curve, $v_1$ and $v_2$ are vector fields along this curve, $D_s$ and $ds$ denote differentiation and integration with respect to arc-length, and $D_sv_1^\bot$ (resp. $D_sv_1^\top$) denotes the component of $D_sv_1$ that is normal (resp. tangent) to the tangent vector $c'$ of the curve. For the case $a=b$, this metric is precisely the one studied by Younes et al. 

In \cite{SrKlJoJe2011}, Srivastava et al. found, analogous to the transformation of \cite{YoMiShMu2008}, an efficient representation of the elastic metric with parameter values $a=1$ and $b=\frac12$. In contrast to the work  \cite{YoMiShMu2008}
their framework is valid for curves with values in arbitrary $\mathbb R^n$. This method, known as the square root velocity function, has proved extremely successful for computations and has been used in numerous applications in shape analysis, see \cite{SrKl2016} and the references therein. The SRVF method has several important properties:
\begin{enumerate}
\item The metric is extended to the space of all absolutely continuous curves, a much larger space of curves than smooth immersions.
\item The space of open parametrized curves is metrically and geodesically complete, and there are explicit formulas to compute geodesics.
\item As a consequence of 2, modding out by the reparametrization group can be implemented efficiently using, e.g., a dynamic programming algorithm. 
\item Geodesics (in the sense of metric spaces) have been shown to exist in the space of unparametrized curves, if one places mild restrictions on the curves, that is if both curves are $C^1$, see \cite{Bru2016}, or if at least one of the two curves is piecewise linear, see \cite{LaRoKl2015}.
\end{enumerate} 
Recently a generalization of the SRVF for a larger range of the parameters $a$ and $b$ has been introduced in \cite{BaBrMaMi2014}. \\

\noindent
{\bf Previous work on extending the SRV-framework to general manifolds:}
Because of the efficiency of the SRVF in analyzing curves in $\mathbb R^n$, several papers have been written generalizing this framework to curves in general Riemannian manifolds \cite{SuKuKlSr2014, ZhSuKlLeSr2015,LeArBa2015}, in Lie groups \cite{CeEsSch2015} and in homogeneous spaces \cite{CeEiEsSc2017,CeEiEsSc2017a,SuKlBa2017a}. In \cite{SuKuKlSr2014}, all tangent vectors of curves are parallel transported along minimal geodesics to the tangent space at a fixed reference point in the manifold. This method is computational effective, but it introduces distortions for curves that venture far away from the reference point and, as a result, the metric is not invariant under the group of isometries of the underlying manifold. Methods in \cite{ZhSuKlLeSr2015,LeArBa2015} are different adaptations of the SRVF for curves with values in manifolds. These methods avoid the arbitrariness and distortion resulting from the choice of a reference point and they are invariant under the isometries of the manifold; however, they have great computational costs. \\

\noindent
{\bf Contributions of this paper:}
In this paper, we generalize the SRVF to curves with values in a homogeneous space $M=G/K$, where $G$ is a Lie group and $K$ is a compact Lie subgroup. 
Our metric is both computationally efficient
and  invariant under the isometry group of the manifold $M$. In particular it avoids the distortion and arbitrariness  of the reference point in \cite{SuKuKlSr2014}. 
Independently of the present work, Celledoni et al. \cite{CeEiEsSc2017,CeEiEsSc2017a} defined a framework for comparing curves in a homogeneous space that is similar to the method given in the present paper, inasmuch as it extends the definition of the SRVF using the Lie group structure of $G$. However, their method applies only to sets of curves that all start at the same point in $M$, which is a severe limitation for applications in shape analysis. In the present work, we use a topological twisting construction to define the SRVF on the space of all absolutely continuous curves in a homogeneous space, not just those starting at a specific point. Compared to previous attempts, our approach has the advantage that it yields explicit formulas for geodesics and geodesic distance, which makes the matching between curves computationally efficient. 

While the class of homogeneous spaces is a very restricted class of manifolds,  it should be noted that the manifolds arising in applications very often fall into this class. Examples of homogeneous spaces include Euclidean spaces, spheres, hyperbolic spaces, Grassmannians, spaces of positive definite symmetric matrices, as well as all Lie groups. 

Our approach is based on first defining the SRVF for curves with values in Lie groups and then putting a metric on the space of parametrized curves in $M$ based on their horizontal lifts to curves in the Lie group $G$. We left translate (instead of parallel transporting) the tangent vectors of these curves to the Lie algebra of $G$. This avoids the distortion resulting from the choice of reference points.
Furthermore, assuming mild conditions on the curves, we prove the existence of optimal reparametrizations both for curves in Lie groups and in homogeneous spaces, thereby generalizing the corresponding results 
\cite{Bru2016,LaRoKl2015} for curves in $\mathbb R^n$. We then present the implementation of our method and show selected examples demonstrating both the
effectiveness of our method and the influence of the curved ambient space.  

The method presented in this paper was originally introduced in an earlier conference paper  by the same authors \cite{SuKlBa2017a}, along with some implementation results. The current paper expands on this earlier paper, giving proofs (which were mostly omitted in the conference paper) of the central theoretical results underlying the method, as well as giving more illustrative implementation results.

%
%

\section{The SRVF for the space of curves with values in a Lie group}

In this section, we will focus on the space of absolutely continuous curves with values in a finite dimensional Lie group. We first recall the definition of an absolutely continuous curve in a smooth manifold:
\begin{defi}
	Let $(N,\mathcal{K})$ be a finite dimensional smooth manifold. A curve $\beta: [0,1]\to N$ is called \textbf{absolutely continuous} if for every local chart $(U, \phi)$ and every closed subset $[a, b]\subset \beta^{-1}(U)$, $\phi\circ\beta: [a,b]\to \mathbb R^N$ is absolutely continuous.
\end{defi}
A function $f:[0,1]\to\mathbb R^N$ is absolutely continuous if and only if $f$ has a derivative a.e., the derivative $f’$ is Lebesgue integrable and $f(t)-f(0) = \int_0^tf'(u)du$, see \cite{Fo1999}. By definition, it is easy to see that $\beta'$ exists a.e. on $N$ and that the length $\operatorname{L}(\beta) = \int_I\|\beta'\|_{\mathcal{K}}dt$ is well defined and finite. In the following let $I=[0,1]$. We denote the space of all absolutely continuous curves with values in $N$ by $AC(I, N)$. In this article N will be either a Lie group or a homogeneous space. For more information on absolutely continuous curves with values in manifolds we refer to the recent article \cite{Schmeding2016} and the references therein.

\subsection{Parametrized curves with values in a Lie group}
Let $G$ be a finite dimensional Lie group. We assume that $G$ is equipped with a left invariant Riemannian metric $\langle\cdot,\cdot \rangle^G$. Denote by $\mathfrak{g} =T_eG$ the Lie algebra of $G$. Following the square root velocity framework (SRVF) introduced for curves in $\mathbb R^N$ by Srivastava et al. in \cite{SrKlJoJe2011}, we define the map 
\begin{align}\label{eq.Q.map}
\begin{cases}
Q: AC(I, G)\to G\times L^2(I,\mathfrak{g})\\
Q(\alpha)=(\alpha(0), q_{\alpha}),
\end{cases}
\end{align}
where 
\begin{align}\label{eq.q.map}
q_{\alpha}(t)=\left\{  \begin{array}{lcr}
\dfrac{L_{\alpha(t)^{-1}}\alpha'(t)}{\sqrt{\|\alpha'(t)\|}} &\alpha'(t)\neq 0\\
0       &\alpha'(t)=0
\end{array} \right.
\end{align}
In this definition, we use $L_{\alpha(t)^{-1}}$ to denote the left translation applied to elements of $G$, and also to tangent vectors. The norm $\|\cdot\|$ is induced by the left invariant metric $\langle\cdot,\cdot \rangle^G$ on $G$ and $\alpha(t)^{-1}$ is the inverse element of $\alpha(t)$ in $G$.
Note that $\alpha\mapsto q_\alpha$ is a map from $AC(I, G)\to L^2(I, \mathfrak{g})$. In most of this paper, without causing confusion, we will simply write $q$ instead of $q_{\alpha}$.
It is easy to see that this map $Q$ is well defined and we have the following proposition. 
\begin{proposition}\label{pro.QBijection}
	The map $Q:AC(I, G)\to G\times L^2(I,\mathfrak{g})$ is a bijection.
\end{proposition}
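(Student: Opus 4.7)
The plan is to establish injectivity and surjectivity separately, using the fact that $Q$ essentially encodes $\alpha$ as the solution of a Carathéodory-type ODE on $G$ with initial value $\alpha(0)$.

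For injectivity, I would first observe that, because the metric is left invariant, $\|q_\alpha(t)\|_\mathfrak{g} = \sqrt{\|\alpha'(t)\|}$ whenever $\alpha'(t)\neq 0$ and $q_\alpha(t)=0$ otherwise, so in either case
\begin{equation*}
\alpha'(t) = dL_{\alpha(t)}\bigl(q_\alpha(t)\,\|q_\alpha(t)\|_\mathfrak{g}\bigr) \quad \text{a.e. in } I.
\end{equation*}
Thus $\alpha$ satisfies a time-dependent left-invariant ODE whose right-hand side is determined entirely by $q_\alpha$. Given $\alpha(0)$ and $q_\alpha$, uniqueness of absolutely continuous solutions of such ODEs (see e.g.\ \cite{Schmeding2016}) then forces $\alpha$ to be uniquely determined, so $Q$ is injective.

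For surjectivity, given $(g_0,q)\in G\times L^2(I,\mathfrak{g})$, define $v(t):=q(t)\,\|q(t)\|_\mathfrak{g}$. Cauchy–Schwarz yields
\begin{equation*}
\int_0^1 \|v(t)\|_\mathfrak{g}\,dt = \int_0^1 \|q(t)\|_\mathfrak{g}^2\,dt <\infty,
\end{equation*}
so $v\in L^1(I,\mathfrak{g})$. I would then solve the Carathéodory ODE $\alpha'(t)=dL_{\alpha(t)}(v(t))$ with $\alpha(0)=g_0$. Existence and uniqueness of an absolutely continuous solution on all of $[0,1]$ for such left-invariant $L^1$-driven ODEs is standard; one proves it by pulling back to a right-invariant frame (or using a chart around the identity and left translating) to reduce to the classical Carathéodory theorem in $\mathbb R^{\dim G}$, with the crucial point that the ODE is linear in left-trivialized coordinates and hence the $L^1$-norm of $v$ controls the growth, preventing finite-time blow-up. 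Having obtained such $\alpha\in AC(I,G)$, one reverses the computation in the injectivity step to check that $q_\alpha=q$ a.e.\ and $\alpha(0)=g_0$, i.e., $Q(\alpha)=(g_0,q)$.

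The main obstacle is really the surjectivity: one has to be careful that the ODE driven by an $L^1$ vector field still admits a global absolutely continuous solution on a manifold. Fortunately, left invariance makes the problem essentially linear after trivialization, so global existence on $[0,1]$ follows without extra hypotheses; the only mild bookkeeping is verifying that the solution produced by the Carathéodory theorem, a priori an absolutely continuous curve in a chart, assembles into an element of $AC(I,G)$ in the sense of the definition above. Injectivity, by contrast, is an almost immediate consequence of the uniqueness half of the same ODE theory.
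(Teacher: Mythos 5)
Your proof is correct and follows essentially the same route as the paper: the preimage of $(\alpha_0,q)$ is characterized as the unique absolutely continuous solution of the left-invariant ODE $\alpha'=L_{\alpha}(\|q\|q)$ with $\|q\|q\in L^1(I,\mathfrak{g})$, the paper invoking Gl\"ockner's $L^1$-regularity of finite-dimensional Lie groups where you sketch the direct chart-by-chart Carath\'eodory argument that the paper itself mentions as an alternative. The one imprecision is your claim that the equation is \emph{linear} in left-trivialized coordinates (in a chart it is control-affine, $\dot x = f(x)v(t)$ with $f$ smooth, not linear); the correct reason there is no finite-time blow-up is that left-invariance lets you translate and concatenate local solutions whose arc length is controlled by $\int_0^1\|v\|\,dt$.
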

\begin{proof}
	Given $(\alpha_0, q)\in G\times L^2(I,\mathfrak{g})$, the preimage $\alpha$ under $Q$ is a solution of the following initial value problem:
	\begin{equation}\label{ODE}
	\begin{cases}
	\alpha'=L_{\alpha}(\|q(t)\|q(t))\\
	\alpha(0)=\alpha_0.
	\end{cases}
	\end{equation}
	In the case of $G=\mathfrak{g}=\mathbb{R}^N$, the existence and uniqueness of such a solution was proven by Robinson in \cite{Rob2012}. In the case of any finite dimensional Lie group $G$, let $(\alpha_0, q)\in G\times L^2([0,1],\mathfrak{g})$. Then $\|q(t)\|q(t)\in L^1(I, \mathfrak{g})$. By \cite[Theorem~C]{Glo2015}, $G$ is $L^1$-regular, which means that there is a unique absolutely continuous curve $\eta$ with values in $G$ such that
	\begin{equation}
	\begin{cases}
	\eta'(t) = L_{\eta(t)}(\|q(t)\|q(t))\\
	\eta(0) = e.
	\end{cases}
	\end{equation}
	This result is a special case of a corresponding result for infinite dimensional Lie groups.
	Let $\alpha(t) = \alpha_0\eta(t)$. Then $\alpha\in AC(I, G)$ and it is the unique solution of the initial value problem \eqref{ODE}. Therefore, $Q$ is a bijection.
\end{proof}
This Proposition also can be proved directly by considering local charts and using Carath\'{e}odory's existence theorem \cite[Theorem ~5.1]{Ha1980} and uniqueness theorem \cite[Theorem ~5.3]{Ha1980}.

Note that $G\times L^2(I,\mathfrak{g})$ is a smooth manifold and it has a natural product metric given by
\begin{equation}\label{eq.metric.ProductSpace}
	\langle(x_1, v_1), (x_2, v_2)\rangle^P_{(y, u)} = \langle x_1, x_2\rangle^G +\int_I\langle v_1, v_2\rangle^Gdt
\end{equation}
where $(y, u)\in G\times L^2(I,\mathfrak{g})$ and $(x_1, v_1), (x_2, v_2)\in T_{(y,u)}\left(G\times L^2(I, \mathfrak{g})\right)$.

\begin{remark}
	Since $Q$ is a bijection, $AC(I,G)$ can be equipped with a smooth structure such that $Q$ is a diffeomorphism. We can then consider the Riemannian metric on $AC(I, G)$ obtained by pulling back the metric from $AC(I,G)$ using $Q$.  However, it is worth noting that if $AC(I, G)$ is equipped with its ``standard" smooth structure, then $Q$ is not differentiable at any curve $\alpha$ with $\alpha'=0$ on a set of positive measure, and thus does not induce a smooth Riemannian metric on $AC(I,G)$; see \cite{Bru2016} for more details.
\end{remark}

Let $\alpha_1, \alpha_2\in AC(I, G)$,	$Q(\alpha_1)=(\alpha_1(0),q_1)$ and	$Q(\alpha_2)=(\alpha_2(0),q_2)$.
The distance function on $AC(I,G)$ is of the form:
\begin{align}\label{eq.disACG}
d(\alpha_1, \alpha_2)=\sqrt{d_{G}^2(\alpha_1(0), \alpha_2(0))+\|q_1-q_2\|^2_{L^2}},
\end{align}
where $d_{G}$ is the geodesic distance on $G$, and $\|\cdot\|_{L^2}$ refers to the $L^2$ norm. The right hand side of this equation is the geodesic distance of $Q(\alpha_1)$ and $Q(\alpha_2)$ on the product space $G\times L^2(I, G)$.

Consider the monoid $\tilde{\Gamma}$ of reparametrizations, where 
\begin{equation}
	\tilde{\Gamma} = \{\gamma: I\to I, \gamma\ \text{is abs. cont.}, \gamma(0)= 0,\gamma(1)= 1, \gamma'\geq 0\ a.e.\}.
\end{equation}
This monoid $\tilde{\Gamma}$ is the closure of the reparametrization group $\Gamma$ in $AC(I, \mathbb R)$, where
\begin{equation}
	\Gamma = \{\gamma: I\to I, \gamma\ \text{is abs. cont.}, \gamma(0)= 0,\gamma(1)= 1, \gamma'> 0\ a.e.\}.
\end{equation}
(The closure is with respect to the SRVF metric; see, e.g, \cite{SrKl2016}).
The semigroup $\tilde{\Gamma}$ acts on $AC(I, G)$ by right composition. We can consider in addition the action of $G$ on $AC(I, G)$ by left multiplication. Given $g\in G,\,\gamma\in\Gamma$, the corresponding actions of $G$ and $\Gamma$ on the product space $G\times L^2(I,\mathfrak{g})$ are as follows:
\begin{align}
g\bullet(\alpha_0, q) &= (g\alpha_0, q)\\
(\alpha_0, q)\star\gamma &=\left(\alpha_0,\  q\circ\gamma\sqrt{\gamma'}\right),
\end{align}
where $(\alpha_0, q)\in G\times L^2(I, \mathfrak{g})$. It is clear that these two actions commute with each other.
We have the following proposition.
\begin{proposition}
	The distance function \eqref{eq.disACG} on $AC(I,G)$ is invariant under the actions of $G$ and under the action of $\tilde{\Gamma}$. 
\end{proposition}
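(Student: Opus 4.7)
The plan is to first derive explicit formulas for how the map $Q$ transforms under the two actions, and then read off invariance of the distance directly from \eqref{eq.disACG} by showing the product metric on $G\times L^2(I,\mathfrak g)$ is preserved. Concretely, I would first show that $Q(g\alpha) = g\bullet Q(\alpha)$ and $Q(\alpha\circ\gamma) = Q(\alpha)\star\gamma$; once this is in hand, invariance reduces to invariance of each of the two summands inside the square root.

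For the $G$-action, I would set $\beta=g\alpha$ and use the chain rule (equivalently, the fact that left translation is a smooth map) to get $\beta'(t)=L_g\alpha'(t)$ a.e. Left invariance of $\langle\cdot,\cdot\rangle^G$ immediately gives $\|\beta'(t)\|=\|\alpha'(t)\|$, and the composition $L_{\beta(t)^{-1}}L_g = L_{\alpha(t)^{-1}g^{-1}}L_g = L_{\alpha(t)^{-1}}$ shows $q_\beta=q_\alpha$. Hence $Q(g\alpha)=(g\alpha(0),q_\alpha)$. Plugging into \eqref{eq.disACG}, the $L^2$-term is unchanged and the first term is invariant by left invariance of $d_G$.

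For the $\tilde\Gamma$-action, I would set $\beta=\alpha\circ\gamma$. The starting point is the chain rule for absolutely continuous compositions with a monotone $\gamma$, giving $\beta'(t)=\gamma'(t)\alpha'(\gamma(t))$ a.e.\ (this holds because $\gamma'\ge 0$ and $\alpha$ is absolutely continuous; it is the same fact used to verify that $AC(I,G)$ is closed under reparametrization). Using $\gamma'(t)\ge 0$ one gets $\|\beta'(t)\|=\gamma'(t)\|\alpha'(\gamma(t))\|$, and then the normalization in \eqref{eq.q.map} yields
\begin{equation*}
q_\beta(t)=\frac{L_{\alpha(\gamma(t))^{-1}}\bigl(\gamma'(t)\alpha'(\gamma(t))\bigr)}{\sqrt{\gamma'(t)\|\alpha'(\gamma(t))\|}}=\sqrt{\gamma'(t)}\,q_\alpha(\gamma(t)),
\end{equation*}
with both sides zero wherever $\gamma'(t)=0$ or $\alpha'(\gamma(t))=0$. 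This is exactly the formula for $\star$, so $Q(\alpha\circ\gamma)=Q(\alpha)\star\gamma$. Since $\beta_i(0)=\alpha_i(0)$, the $d_G$-term of \eqref{eq.disACG} is untouched, and invariance of the $L^2$-term follows from
\begin{equation*}
\|q_{\alpha_1\circ\gamma}-q_{\alpha_2\circ\gamma}\|_{L^2}^{2}=\int_0^1 \gamma'(t)\,\|q_1(\gamma(t))-q_2(\gamma(t))\|^2\,dt=\int_0^1\|q_1(u)-q_2(u)\|^2\,du,
\end{equation*}
i.e.\ the change-of-variables formula under $u=\gamma(t)$.

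The only genuinely nontrivial step I anticipate is the last change of variables, because a general $\gamma\in\tilde\Gamma$ is only absolutely continuous, weakly monotone, and can have intervals of constancy, so it is not a diffeomorphism. The justification is the standard substitution theorem for absolutely continuous monotone functions (equivalently, the area formula in dimension one), which guarantees $\int_0^1 (f\circ\gamma)\,\gamma'\,dt=\int_0^1 f\,du$ for any nonnegative Borel $f$; I would cite this and remark that the ambiguity of $q$ on the null set $\{\alpha'=0\}$ is irrelevant since $q$ is an $L^2$ equivalence class. Everything else is a direct computation from left invariance of $\langle\cdot,\cdot\rangle^G$ and the chain rule.
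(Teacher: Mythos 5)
Your proof is correct and follows essentially the same route as the paper: reduce invariance to the two summands in \eqref{eq.disACG}, use left invariance of the metric for the $d_G$-term, and use $q_{\alpha\circ\gamma}=\sqrt{\gamma'}\,(q_\alpha\circ\gamma)$ together with the change of variables $u=\gamma(t)$ for the $L^2$-term. You are somewhat more careful than the paper, which simply asserts the equivariance of $Q$ under both actions and does not comment on the substitution rule for a merely absolutely continuous, weakly monotone $\gamma\in\tilde\Gamma$; your explicit justification of these points is a welcome addition but not a different argument.
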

\begin{proof}
	Since the metric on $G$ is left invariant, it is easy to see that $G$ acts by isometries. Let $\alpha_1, \alpha_2\in AC(I, G),\ \gamma\in\tilde{\Gamma}$, we have
	\begin{align}
		d(\alpha_1\circ\gamma, \alpha_2\circ\gamma) &= \sqrt{d_{G}^2(\alpha_1(0), \alpha_2(0))+\|q_1\star\gamma-q_2\star\gamma\|_{L^2}^2}\notag\\
		& = \sqrt{d_{G}^2(\alpha_1(0), \alpha_2(0))+\|(q_1\circ\gamma-q_2\circ\gamma)\sqrt{\gamma'}\|_{L^2}^2}\notag\\
		& = \sqrt{d_{G}^2(\alpha_1(0), \alpha_2(0))+\|q_1-q_2\|^2_{L^2}}\notag\\
		& = d(\alpha_1, \alpha_2).
	\end{align}
	Thus $\tilde{\Gamma}$ also acts on $AC(I, G)$ by isometries.
\end{proof}

We now give an interpretation of the metric on $AC(I, G)$. Let 
\begin{equation}
	\delta^l: AC(I, G)\to L^1(I, \mathfrak{g}),\quad \delta^l(\alpha) = \alpha^{-1}\alpha',
\end{equation}
which is called the {\it left logarithmic derivative}, see \cite{Mi2008}.
\begin{proposition}
    Let $\alpha$ be an absolutely continuous curve with values in $G$ that has non-vanishing derivatives a.e. and let $u, v\in T_{\alpha}AC(I, G)$. The pullback metric $\mathcal{G}$ on $AC(I, G)$ at $\alpha$ is given by
    \begin{align}\label{eq.metric.AC.G.G}
    \mathcal{G}_{\alpha}(u, v)=\langle u(0),v(0)\rangle^G
    +\int \langle D_s u^N, D_s v^N\rangle^G+\frac14\langle D_s u^T, D_sv^T\rangle^Gds,
    \end{align}
    where $D_s(u)=\frac{1}{\|\alpha'\|}\delta^l_{*\alpha}(u)$, $D_su^T=\langle D_su,
    \frac{\delta^l(\alpha)}{\|\alpha'\|}\rangle^G\left(\frac{\delta^l(\alpha)}{\|\alpha'\|}\right)$, $D_su^N=D_su-D_su^T$ and we integrate with respect to the arclength $ds=\|\alpha'(t)\|dt$.  
\end{proposition}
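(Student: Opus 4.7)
The plan is to compute $Q_{*\alpha}$ explicitly and then pull back the product metric \eqref{eq.metric.ProductSpace} through this differential. Since $Q(\alpha) = (\alpha(0), q_\alpha)$, we have $Q_{*\alpha}(u) = (u(0), Dq_\alpha(u))$, so the first component immediately contributes the boundary term $\langle u(0), v(0)\rangle^G$. The real work is to compute $Dq_\alpha(u)$ and then integrate $\langle Dq_\alpha(u), Dq_\alpha(v)\rangle^G$ against $dt$ on $I$.

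Write $\phi = \delta^l(\alpha)$; by left-invariance of the metric, $\|\alpha'(t)\|^G_{\alpha(t)} = \|\phi(t)\|^G$, so we may rewrite $q_\alpha = \phi/\sqrt{\|\phi\|}$, a formula living entirely in $\mathfrak g$. Setting $w = \delta^l_{*\alpha}(u)$, the product and chain rules yield
\begin{equation*}
Dq_\alpha(u) \;=\; \frac{w}{\sqrt{\|\phi\|}} \;-\; \frac{\langle \phi, w\rangle^G}{2\,\|\phi\|^{5/2}}\,\phi.
\end{equation*}
Now translate into the notation of the statement: let $T = \phi/\|\alpha'\|$, so that $w = \|\alpha'\|\,D_s u$ and $\phi = \|\alpha'\|\,T$. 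Substituting and using $D_s u^T = \langle D_s u, T\rangle^G T$, $D_s u = D_s u^N + D_s u^T$, the expression collapses to
\begin{equation*}
Dq_\alpha(u) \;=\; \sqrt{\|\alpha'\|}\bigl(D_s u - \tfrac12 D_s u^T\bigr) \;=\; \sqrt{\|\alpha'\|}\bigl(D_s u^N + \tfrac12 D_s u^T\bigr).
\end{equation*}
Taking the inner product with the analogous expression for $v$, the orthogonality $D_s u^N \perp D_s v^T$ kills the mixed terms, giving
\begin{equation*}
\langle Dq_\alpha(u), Dq_\alpha(v)\rangle^G \;=\; \|\alpha'\|\Bigl(\langle D_s u^N, D_s v^N\rangle^G + \tfrac14\langle D_s u^T, D_s v^T\rangle^G\Bigr).
\end{equation*}
Integrating against $dt$ and applying the change of variables $ds = \|\alpha'(t)\|\,dt$ converts the $L^2$-part of the product metric into the integral in \eqref{eq.metric.AC.G.G}; combining with the boundary term gives the claimed formula.

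The main obstacle is essentially bookkeeping rather than anything conceptually deep: one must carefully exploit left-invariance to identify the pointwise norm $\|\alpha'(t)\|^G_{\alpha(t)}$ with $\|\phi(t)\|^G$ on $\mathfrak g$, and keep the chain rule clean when differentiating the square root in the denominator of $q_\alpha$. The hypothesis $\alpha' \neq 0$ almost everywhere is essential, since division by $\sqrt{\|\alpha'\|}$ and the normal/tangential decomposition relative to $T$ are only meaningful on the set where $\alpha'$ does not vanish—this is also the regularity condition flagged in the preceding remark as necessary for $Q$ to be differentiable at $\alpha$.
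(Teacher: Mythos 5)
Your proposal is correct and follows the same route as the paper: both compute $Q_{*\alpha}u=(u(0),q_{*\alpha}u)$, derive the formula $q_{*\alpha}u=\|\alpha'\|^{1/2}D_su-\tfrac12\|\alpha'\|^{-3/2}\langle D_su,\delta^l(\alpha)\rangle^G\,\delta^l(\alpha)$ (which the paper cites from Celledoni et al.\ and you verify directly), and then pull back the product metric, using the tangential/normal splitting and the change of variables $ds=\|\alpha'\|\,dt$. The only difference is that you carry out explicitly the ``direct computation'' the paper leaves to the reader, and your bookkeeping checks out.
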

\begin{proof}
	First, we have the differential of $Q$ at $\alpha$:
	\begin{equation}
		\begin{cases}
		Q_{*\alpha}: T_{\alpha}AC([0,1], G)\to T_{(\alpha(0),q)}(G\times L^2([0,1],\mathfrak{g}))\notag\\
		Q_{*\alpha}u=\left(u(0),q_{*\alpha}u\right),
		\end{cases}
	\end{equation}
	where we use $q$ to denote the map from $AC(I, G)\to L^2(I, \mathfrak{g})$ defined in \eqref{eq.q.map}. We have the differential $q_{*\alpha}: T_{\alpha}AC([0,1], G)\to T_qL^2([0,1],\mathfrak{g})$ and
	\begin{align}
	q_{*\alpha}u=&\|\alpha'\|^{1/2}D_s(u)-\frac{1}{2} \| \alpha'\|^{-3/2} \langle D_s u, \delta^l(\alpha)\rangle^G \delta^l(\alpha).
	\end{align}
	A computation of this can be found in \cite{CeEsSch2015}. Note that the pullback metric $\mathcal{G}$ on $AC(I, G)$ at $\alpha$ is defined by
	\begin{align}
	\mathcal{G}_{\alpha}(u, v)&=\langle Q_{*\alpha}u, Q_{*\alpha}v\rangle^P_{Q(\alpha)}\notag\\
	&=\langle u(0), v(0)\rangle^G + \int_I\langle q_{*\alpha}u, q_{*\alpha}v\rangle^Gdt.
	\end{align}
	By direct computation, the expression \eqref{eq.metric.AC.G.G} of $\mathcal{G}$ follows immediately.
\end{proof}
In the case of $G =\mathbb{R}^N$, the last two terms in the formula of the metric $\mathcal{G}$ will become the elastic metric as defined in \cite{MiSrJo2007}. On Lie groups, if using right trivialization instead of left, the last two terms form the pullback metric obtained by Celledoni et al. in \cite{CeEsSch2015}. However, it is different from the metrics introduced by Le Brigant et al. \cite{LeArBa2015} and Zhang et al. \cite{ZhSuKlLeSr2015} for arbitrary Riemannian manifolds, since their methods are based on parallel transport, which is different from the left translation on Lie groups used in our method.

\subsection{Unparametrized curves with values in a Lie group}

Since we are interested in unparametrized curves, we now define an equivalence relation on $AC(I, G)$ as follows:
given $\alpha_1$ and $\alpha_2$, we say $\alpha_1\sim\alpha_2$ if and only if there exist $\bar{\alpha}\in AC(I, G)$, $\gamma_1, \gamma_2 \in\tilde{\Gamma}$ such that 
$\alpha_1=\bar{\alpha}\circ\gamma_1$ and $\alpha_2=\bar{\alpha}\circ\gamma_2$. This is equivalent to $\overline{\alpha_1\Gamma} = \overline{\alpha_2\Gamma}$ with the metric defined on $AC(I, G)$, see \cite{LaRoKl2015,Bru2016}. Then we define the shape space $\mathcal{S}(I, G)$ as the set of equivalence classes under $\sim$, that is, the quotient space of parametrized curves with values in $G$
\begin{equation}
	\mathcal{S}(I,G) = AC(I, G)/\sim.
\end{equation}
By the {\it shape} of a curve with values in $G$, we mean its equivalence class in $\mathcal{S}(I, G)$. The space $\mathcal{S}(I, G)$ is not a manifold but we can endow $\mathcal{S}(I, G)$ with a distance function so that it becomes a metric space: since the distance on $AC(I, G)$ is reparametrization invariant, we can consider the induced quotient distance on $\mathcal{S}(I, G)$
\begin{align}\label{eq.disSG}
	d([\alpha_1], [\alpha_2]) &= \inf_{\gamma_1,\gamma_2\in\tilde{\Gamma}}d(\alpha_1\circ\gamma_1, \alpha_2\circ\gamma_2)\notag\\
	& = \inf_{\gamma_1,\gamma_2\in\tilde{\Gamma}}\sqrt{d_G^2(\alpha_1(0),\alpha_2(0))+ \|q_1\star\gamma_1-q_2\star\gamma_2\|_{L^2}^2}.
\end{align}
We now focus on the following important question: given absolutely continuous curves $\alpha_1$ and $\alpha_2$ with values in $G$, do there always exist $\gamma_1,\gamma_2\in\tilde{\Gamma}$ realizing this infimum? For curves in $\mathbb R^N$ this question has been answered in \cite{Bru2016}. We will show that the analogous results are also true for curves in Lie groups. Following \cite{Bru2016} we immediately obtain the result: 
\begin{thm}\label{CounterExampleLieGroup}
	If $\dim G\geq 2$, there exists a pair of Lipschitz curves with values in $G$ such that the infimum \eqref{eq.disSG} cannot be obtained by any pair of reparametrizations.
\end{thm}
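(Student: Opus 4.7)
The plan is to reduce the Lie group statement directly to the Euclidean counterexample of \cite{Bru2016}, using the inverse of the SRVF map $Q$ of Proposition~\ref{pro.QBijection} to transport the bad pair from $\mathbb R^2$ into $G$. The key observation is that, once $\alpha_1(0)=\alpha_2(0)$ is fixed, the quotient distance \eqref{eq.disSG} depends only on the $L^2$ behaviour of the SRVFs $q_i\in L^2(I,\mathfrak g)$ and on how the reparametrisation semigroup $\tilde\Gamma$ acts on them via $q\mapsto q\circ\gamma\sqrt{\gamma'}$. This action is purely scalar in the $\mathfrak g$-fibre of $L^2(I,\mathfrak g)$ and therefore stabilises any fixed linear subspace of $\mathfrak g$.

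First I would fix any $2$-dimensional linear subspace $V\subset\mathfrak g$, which exists because $\dim G\ge 2$. The restriction of the inner product $\langle\cdot,\cdot\rangle^G$ on $T_eG=\mathfrak g$ makes $V$ isometric to Euclidean $\mathbb R^2$; fix such an isometry. Next I would invoke the Bruveris counterexample to obtain two Lipschitz curves $c_1,c_2:I\to\mathbb R^2$, starting at the origin, whose SRVFs $q_{c_1},q_{c_2}\in L^\infty(I,\mathbb R^2)$ have the non-attainment property: $\inf_{\gamma_1,\gamma_2\in\tilde\Gamma}\|q_{c_1}\star\gamma_1-q_{c_2}\star\gamma_2\|_{L^2}$ is not realised by any pair $(\gamma_1,\gamma_2)\in\tilde\Gamma\times\tilde\Gamma$.

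Transferring this via the isometry $V\cong\mathbb R^2$, I view $q_{c_1},q_{c_2}$ as elements of $L^2(I,V)\subset L^2(I,\mathfrak g)$, and use Proposition~\ref{pro.QBijection} to set $\alpha_i:=Q^{-1}(e,q_{c_i})\in AC(I,G)$. The ODE \eqref{ODE} together with left-invariance of $\langle\cdot,\cdot\rangle^G$ gives $\|\alpha_i'(t)\|=\|q_{c_i}(t)\|^2$ a.e., which is essentially bounded since each $c_i$ is Lipschitz; hence $\alpha_i$ is Lipschitz in $G$. By construction the SRVF of $\alpha_i$ equals $q_{c_i}$, and since $q\mapsto q\circ\gamma\sqrt{\gamma'}$ preserves the subspace $V$, both $q_{\alpha_i}\star\gamma_i$ lie in $L^2(I,V)$, so the $L^2(I,\mathfrak g)$-norm of their difference coincides with the $L^2(I,\mathbb R^2)$-norm of $q_{c_1}\star\gamma_1-q_{c_2}\star\gamma_2$. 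Combined with $\alpha_1(0)=\alpha_2(0)=e$, this reduces \eqref{eq.disSG} for $(\alpha_1,\alpha_2)$ exactly to the Euclidean infimum for $(c_1,c_2)$, so non-attainment in $\mathbb R^2$ forces non-attainment in $G$.

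The main points to verify carefully are that $Q^{-1}$ of an $L^\infty$ SRVF really produces a Lipschitz curve in $G$, which follows from the identity $\|\alpha'\|=\|q\|^2$, and that the $\tilde\Gamma$-action leaves our chosen $2$-dimensional subspace invariant, which is immediate from the scalar nature of $q\mapsto q\circ\gamma\sqrt{\gamma'}$. There is no serious obstacle beyond importing Bruveris's construction; the essential new content is that the SRVF framework on $G$, being defined through left translation, decouples in exactly the way needed to embed the Euclidean counterexample.
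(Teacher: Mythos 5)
Your proposal is correct and follows essentially the same route as the paper: the paper likewise takes an orthonormal pair $\{w_1,w_2\}\subset\mathfrak g$, writes Bruveris's explicit counterexample SRVFs inside their span, and takes preimages under $Q$ to obtain the Lipschitz curves in $G$. The only difference is that you treat the Bruveris construction as a black box while the paper transcribes the explicit functions $p_1,p_2$; your supporting observations (the $\tilde\Gamma$-action preserves the $2$-dimensional subspace, and $\|\alpha'\|=\|q\|^2$ makes $Q^{-1}$ of a bounded SRVF Lipschitz) are exactly what makes the paper's reduction work.
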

\begin{proof}
	We follow the idea in \cite{Bru2016} with small adjustments. Set
	$v_1 = (\cos\epsilon t)w_1+ (\sin\epsilon t)w_2$,
	$v_2 = -\frac12w_1+\frac{\sqrt{3}}{2}w_2$, and
	$v_3 = -\frac12w_1-\frac{\sqrt{3}}{2}w_2$,
	where $0<\epsilon\leq\frac 16$ and $\{w_1, w_2\}$ is an orthonormal pair of elements of $\mathfrak{g}$. Let $B\subset I$ be a closed and nowhere dense set with Lebesgue measure $\frac{1}{2}$ and $A = I\backslash B$. We define
	\begin{align}
		&p_1(t) = v_1(t)\mathbbm{1}_A(t)+v_2(t)\mathbbm{1}_B(t)\notag\\
		&p_2(t) = v_1(t)\mathbbm{1}_A(t)+v_3(t)\mathbbm{1}_B(t);
	\end{align}
	then the preimages of $p_1(t)$ and $p_2(t)$ under the map $Q$ are Lipschitz curves with values in $G$, for which no optimal reparametrizations exist. See more details in \cite{Bru2016}.
\end{proof}

However, under slightly more restrictive conditions we get existence of optimal reparametrizations. The existence of optimal reparametrizations for piecewise linear curves in $\mathbb{R}^N$ has been proven by Lahiri et al. \cite{LaRoKl2015} in 2015: they show the existence of optimal reparametrizations if at least one of the curves is piecewise linear (PL). For $C^1$ curves in $\mathbb{R}^N$, the existence of optimal reparametrizations was shown by Bruveris \cite{Bru2016} in 2016. In the following we generalize these results for curves with values in a Lie group G. First, we first extend the definition of PL curves to Lie Group valued curves. 
\begin{defi}
	We call a curve $\alpha$ with values in a Lie group $G$ a \textbf{generalized PL curve} if there exists a sequence $0= t_0\leq t_1\leq t_2 \cdots t_{k-1}\leq t_{k}= 1$ such that
	\begin{align}\label{PLLieGroup}
	\alpha(t)=\left\{  \begin{array}{lcr}
	\alpha_0\exp(v_1t) & t\in [0, t_1]\\
	\alpha_0\exp(v_1t_1)\exp(v_2(t-t_1))      & t\in [t_1, t_2]\\
	\alpha_0\exp(v_1t_1)\exp(v_2(t_2-t_1))\exp(v_3(t-t_2)) & t\in [t_2, t_3]\\
	\vdots\\
	\alpha_0\exp(v_1t_1)\cdots \exp(v_k(t-t_{k-1})) & t\in [t_{k-1}, 1]
	\end{array} \right.
	\end{align}
	where $\alpha_0\in G$, $v_j\in\mathfrak{g}, 1\leq j\leq k$, and where $\exp: \mathfrak{g}\to G$ denotes the Lie group exponential.
\end{defi}
We are now able to formulate our main result:.

\begin{thm}\label{OpExstLieGroupMain}
	Let $G$ be a Lie group and $\alpha_1, \alpha_2\in AC(I, G)$. Assume in addition that one of the following two conditions is satisfied:
	\begin{enumerate}[a)]
		\item $\alpha_1$ or $\alpha_2$ is a generalized PL curve;
		\item $\alpha_1, \alpha_2\in C^1(I,G)$ with $\alpha_1',  \alpha_2' \neq 0$ a.e.
	\end{enumerate}
	Then there exists a pair of reparametrizations $\gamma_1, \gamma_2\in\tilde{\Gamma}$ such that
	\begin{equation}
	d(\alpha_1\circ\gamma_1, \alpha_2\circ\gamma_2) = d([\alpha_1], [\alpha_2]).
	\end{equation}
	Furthermore, if $\alpha_1$ and $\alpha_2$ are both generalized PL curves, then this distance can be realized by a pair of piecewise linear functions in $\tilde{\Gamma}$.
\end{thm}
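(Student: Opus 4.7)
The plan is to reduce the minimization problem to one that has already been solved in the Euclidean setting. Since the starting-point contribution $d_G(\alpha_1(0),\alpha_2(0))$ in \eqref{eq.disACG} is unaffected by any reparametrization, realizing the distance $d([\alpha_1],[\alpha_2])$ is equivalent to realizing
\[
\inf_{\gamma_1,\gamma_2\in\tilde\Gamma}\|q_1\star\gamma_1 - q_2\star\gamma_2\|_{L^2}^2.
\]
Because $\mathfrak g$ is a finite dimensional inner product space, picking an orthonormal basis yields a linear isometry $\mathfrak g\cong\mathbb R^n$ with $n=\dim G$ that identifies $L^2(I,\mathfrak g)$ with $L^2(I,\mathbb R^n)$ and intertwines the right action $\star$ of $\tilde\Gamma$. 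Thus the whole problem transports verbatim to the SRVF matching problem for $\mathbb R^n$-valued functions, and all that remains is to check that $q_1$ and $q_2$ lie in the regularity classes already treated in \cite{LaRoKl2015,Bru2016}.

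For case (a) the first step is to compute the SRVF of a generalized PL curve. The defining property that $t\mapsto\exp(tv)$ is an integral curve of the left-invariant field generated by $v$ gives $\frac{d}{dt}\exp(tv)=L_{\exp(tv)}v$; consequently, on each interval $[t_{j-1},t_j]$ where $\alpha(t)=g_j\exp((t-t_{j-1})v_j)$ with $g_j\in G$, the left-logarithmic derivative $L_{\alpha(t)^{-1}}\alpha'(t)$ equals the constant $v_j\in\mathfrak g$. Hence $q_\alpha\equiv v_j/\sqrt{\|v_j\|}$ on $(t_{j-1},t_j)$ (with the convention $q_\alpha\equiv 0$ when $v_j=0$), so $q_\alpha$ is a piecewise constant $\mathfrak g$-valued function with breakpoints $0=t_0<\dots<t_k=1$. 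Under the identification $\mathfrak g\cong\mathbb R^n$ this is exactly the SRVF of a PL curve in $\mathbb R^n$, and the existence of optimal reparametrizations follows directly from the theorem of Lahiri, Robinson and Klassen \cite{LaRoKl2015}. When both $\alpha_1,\alpha_2$ are generalized PL, the same theorem additionally yields a pair of PL optimizers, giving the final assertion.

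For case (b) the plan is to verify that $q_\alpha$ is continuous on $[0,1]$ whenever $\alpha\in C^1(I,G)$ with $\alpha'\neq 0$ a.e. At any $t_0$ with $\alpha'(t_0)\neq 0$, continuity is immediate from the smoothness of left translation together with the continuity of $v\mapsto v/\sqrt{\|v\|}$ away from the origin. At any $t_0$ with $\alpha'(t_0)=0$, continuity follows from the bound $\|q_\alpha(t)\|=\sqrt{\|\alpha'(t)\|}$, which tends to zero as $t\to t_0$ by continuity of $\alpha'$. After transport to $\mathbb R^n$, both $q_1$ and $q_2$ are therefore continuous $\mathbb R^n$-valued functions, and Bruveris's existence theorem \cite{Bru2016} applies.

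The main obstacle is conceptual rather than computational: one must be sure that the theorems of \cite{LaRoKl2015,Bru2016} really concern abstract $L^2$ SRVFs valued in a finite dimensional inner product space, rather than depending on genuinely Euclidean features of $\mathbb R^n$ (e.g.\ its global vector-space coordinate chart, or its flatness in the sense of having trivial curvature). Once one checks that those arguments use only the inner product on the target and the right action $q\star\gamma=(q\circ\gamma)\sqrt{\gamma'}$, the present theorem reduces to the two regularity verifications outlined above, with no new analytic work specific to the Lie group $G$.
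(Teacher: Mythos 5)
Your proposal is correct and follows essentially the same route as the paper: drop the starting-point term (reparametrization-invariant), identify $\mathfrak g\cong\mathbb R^N$ isometrically, observe that generalized PL curves correspond exactly to step maps (the paper's Lemma~\ref{PLLieGroupCorrespondence}) and that $C^1$ curves with a.e.\ nonvanishing derivative have continuous SRVFs, then invoke the results of Lahiri--Robinson--Klassen and Bruveris (the paper packages this as Proposition~\ref{OptimalExstStepLieGroup}). Your explicit check of continuity of $q_\alpha$ at isolated zeros of $\alpha'$ via $\|q_\alpha(t)\|=\sqrt{\|\alpha'(t)\|}$ is a small point the paper glosses over, but otherwise the arguments coincide.
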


In preparation for the proof of Theorem \ref{OpExstLieGroupMain}, we give the definition of a step map in $L^2(I,\mathfrak{g})$.

\begin{defi}
	We call $q\in L^2(I,\mathfrak{g})$ a \textbf{step map} if there exists a finite sequence $0= t_0\leq t_1\leq t_2 \cdots t_{k-1}\leq t_{k}= 1$ such that q is constant on each interval $(t_{j-1},t_{j})$, i.e., 
	$$q(t)=q_j, \quad \forall t\in (t_{j-1},t_{j}),$$
	where $q_j\in\mathfrak{g}$ for all $1\leq j\leq k$.
\end{defi}
%
%
The mapping $Q$ is defined using the first derivative of the curve $\alpha$. Thus in the case of curves with values in $\mathbb{R}^n$, there exists a one-to-one correspondence between PL-curves and step functions, see \cite{LaRoKl2015}. In our more intricate situation we have the following result:
\begin{lemma}\label{PLLieGroupCorrespondence}
	Let $\alpha_0\in G$ and let $q\in L^2(I, \mathfrak{g})$. Then $q$ is a step map if and only if $Q^{-1}(\alpha_0, q)$ is a generalized PL curve. 
\end{lemma}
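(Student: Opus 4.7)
The plan is to exploit the bijection established in Proposition \ref{pro.QBijection}: the curve $\alpha = Q^{-1}(\alpha_0, q)$ is characterized as the unique absolutely continuous solution of the initial value problem
\begin{equation*}
\alpha'(t) = L_{\alpha(t)*}\bigl(\|q(t)\| q(t)\bigr), \qquad \alpha(0) = \alpha_0.
\end{equation*}
Both directions of the equivalence then amount to matching the structure of this ODE against the structure of a generalized PL curve.

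For the direction ``$q$ step map $\Rightarrow$ $\alpha$ generalized PL'', I would partition $[0,1]$ according to the subdivision $0=t_0\leq t_1 \leq\dots\leq t_k=1$ on which $q$ is constantly equal to $q_j$, and set $v_j := \|q_j\|\,q_j \in \mathfrak{g}$. On each subinterval $[t_{j-1},t_j]$ the ODE reduces to $\alpha' = L_{\alpha*}(v_j)$ with constant $v_j$, whose solution starting at $\alpha(t_{j-1})$ is $\alpha(t) = \alpha(t_{j-1})\exp(v_j(t-t_{j-1}))$. Chaining these solutions together and invoking uniqueness from Proposition \ref{pro.QBijection} produces precisely the form \eqref{PLLieGroup}.

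For the converse direction, I would take $\alpha$ to be a generalized PL curve of the form \eqref{PLLieGroup} and compute $q_\alpha$ on each subinterval $(t_{j-1},t_j)$. On such an interval, write $\alpha(t) = g_{j-1}\exp(v_j(t-t_{j-1}))$ where $g_{j-1} = \alpha_0\exp(v_1 t_1)\cdots\exp(v_{j-1}(t_{j-1}-t_{j-2}))$ is a constant element of $G$. Using left-invariance of the tangent map and the standard derivative of a one-parameter subgroup, one finds $\alpha'(t) = L_{\alpha(t)*}(v_j)$, so $L_{\alpha(t)^{-1}*}\alpha'(t) = v_j$ is constant. By left-invariance of the metric, $\|\alpha'(t)\| = \|v_j\|$, and thus on $(t_{j-1},t_j)$
\begin{equation*}
q_\alpha(t) = \frac{v_j}{\sqrt{\|v_j\|}},
\end{equation*}
a constant element of $\mathfrak{g}$ (interpreting this as $0$ when $v_j=0$). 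Therefore $q_\alpha$ is a step map with the same subdivision.

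The only subtle point is the derivative computation in the second direction, where one must confirm that differentiating a product of a fixed group element with an exponential really produces left translation of the Lie algebra generator, and that this works uniformly even though the ``initial position'' $g_{j-1}$ differs on each subinterval. This is a direct application of the chain rule together with $\frac{d}{dt}\exp(tv) = L_{\exp(tv)*}v$, so I expect no serious obstacle; the remainder of the argument is essentially bookkeeping on the subdivision.
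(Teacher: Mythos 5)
Your proof is correct and follows essentially the same route as the paper's: the paper's ``direct computation'' in the forward direction is exactly your piecewise solution of the ODE $\alpha' = L_{\alpha*}(\|q\|q)$ with $v_j = \|q_j\|q_j$, and the converse is the same differentiation of \eqref{PLLieGroup} yielding $q_\alpha = v_j/\sqrt{\|v_j\|}$ on each subinterval. You simply make explicit the uniqueness appeal to Proposition \ref{pro.QBijection} and the chain-rule computation that the paper leaves implicit.
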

\begin{proof}
	Assume that $q(t) = q_j\in\mathfrak{g}$ for all $t\in(t_{j-1},t_{j})$, where $0= t_0\leq t_1\leq t_2 \cdots t_{k-1}\leq t_{k}= 1$. By direct computation, we have
	\begin{align}
	\alpha(t)=\left\{  \begin{array}{lcr}
	\alpha_0\exp(v_1t) & t\in [0, t_1]\\
	\alpha_0\exp(v_1t_1)\exp(v_2(t-t_1))       & t\in [t_1, t_2]\\
	\alpha_0\exp(v_1t_1)\exp(v_2(t_2-t_1))\exp(v_3(t-t_2)) & t\in [t_2, t_3]\\
	\vdots\\
	\alpha_0\exp(v_1t_1)\exp(v_2(t_2-t_1))\cdots \exp(v_k(t-t_{k-1})) & t\in [t_{k-1}, 1]
	\end{array} \right.
	\end{align}
	where $v_j=q_j\|q_j\|$, $1\leq j\leq k$. Conversely, let $\alpha$ be a curve of the form (\ref{PLLieGroup}), then $q(t) = \frac{v_j}{\sqrt{\|v_j\|}}, \forall t\in(t_{j-1}, t_{j})$, which is obviously a step map. 	
\end{proof}

Using that $\mathfrak{g}$ is isomorphic to some $\mathbb{R}^N$, we can apply \cite[Theorem~4, Theorem~5]{LaRoKl2015} and \cite[Proposition ~15]{Bru2016} to this case to get the following proposition.

\begin{proposition}\label{OptimalExstStepLieGroup}
	Let $q_1, q_2\in L^2(I, \mathfrak{g})$. Assume in addition that one of the following two conditions is satisfied:
	\begin{enumerate}[a)]
		\item $q_1$ or $q_2$ is a step map;
		\item $q_1$ and $q_2$ are continuous with $q_1, q_2\neq 0$ a.e.
	\end{enumerate}
	Then there exist $\gamma_1, \gamma_2 \in\tilde{\Gamma}$ such that
	\begin{equation}
	\|q_1\star\gamma_1-q_2\star\gamma_2\|_{L^2}= \inf_{\tilde{\gamma_1},\tilde{\gamma_2}\in\tilde{\Gamma}}\|q_1\star\tilde{\gamma_1}- q_2\star\tilde{\gamma_2}\|_{L^2}.
	\end{equation}
	Furthermore, if $q_1$ and $q_2$ are both step maps, then this infimum can be realized by a pair of piecewise linear functions in $\tilde{\Gamma}$.
\end{proposition}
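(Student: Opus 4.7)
The plan is to reduce the statement to the already-established results for curves in Euclidean space, namely \cite[Theorems~4 and~5]{LaRoKl2015} for the step-map case and \cite[Proposition~15]{Bru2016} for the continuous case. The reduction rests on the observation that the Lie algebra $\mathfrak{g}$, equipped with the inner product $\langle\cdot,\cdot\rangle^G_e$ obtained by restricting the left-invariant Riemannian metric on $G$ to $T_eG$, is a finite-dimensional real inner-product space and is therefore isometric to Euclidean $\mathbb{R}^N$ with $N=\dim G$; fix such a linear isometry $\phi:\mathfrak{g}\to\mathbb{R}^N$, e.g.\ by sending an orthonormal basis of $\mathfrak{g}$ to the standard basis of $\mathbb{R}^N$.

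Next I would check that post-composition with $\phi$ yields a linear isometry $\Phi:L^2(I,\mathfrak{g})\to L^2(I,\mathbb{R}^N)$ that intertwines the right action $\star$ of the monoid $\tilde{\Gamma}$. The isometry property is immediate from
\[
\|\Phi(q)\|_{L^2}^2=\int_I\|\phi(q(t))\|^2\,dt=\int_I\|q(t)\|_{\mathfrak{g}}^2\,dt=\|q\|_{L^2}^2,
\]
and the equivariance $\Phi(q\star\gamma)=\Phi(q)\star\gamma$ follows from the linearity of $\phi$ together with the pointwise formula $(q\star\gamma)(t)=q(\gamma(t))\sqrt{\gamma'(t)}$. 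Combining these two properties gives
\[
\|q_1\star\gamma_1-q_2\star\gamma_2\|_{L^2}=\|\Phi(q_1)\star\gamma_1-\Phi(q_2)\star\gamma_2\|_{L^2}
\]
for every $\gamma_1,\gamma_2\in\tilde{\Gamma}$, so the infima over $\tilde{\Gamma}\times\tilde{\Gamma}$ on the two sides are equal and any pair attaining the infimum on one side attains it on the other.

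Finally I would verify that the hypotheses transfer correctly under $\Phi$: since $\phi$ is a linear bijection, $q_i$ is a step map with nodes $0=t_0\le\cdots\le t_k=1$ if and only if $\Phi(q_i)$ is, and $q_i$ is continuous with $q_i\neq 0$ almost everywhere if and only if $\Phi(q_i)$ enjoys the same property. Under hypothesis (a), \cite[Theorems~4 and~5]{LaRoKl2015} supply the desired $\gamma_1,\gamma_2\in\tilde{\Gamma}$ for the transferred problem in $L^2(I,\mathbb{R}^N)$, together with the additional conclusion that they may be taken piecewise linear when both $\Phi(q_i)$ are step maps; under hypothesis (b), \cite[Proposition~15]{Bru2016} does the same for the continuous case. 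Pulling these reparametrizations back via the equality displayed above yields the proposition. The main obstacle in this argument is conceptual rather than technical: one has to recognize that the structure on $\mathfrak{g}$ relevant for all three ingredients (norm, step-function structure, continuity, and the $\star$-action of $\tilde{\Gamma}$) is only the inner-product space structure, and this is precisely what the left-invariant metric on $G$ provides via the identification with $\mathbb{R}^N$.
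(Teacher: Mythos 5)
Your proposal is correct and follows exactly the route the paper takes: the paper simply notes that $\mathfrak{g}$ is isomorphic (as an inner-product space) to $\mathbb{R}^N$ and then invokes \cite[Theorems~4 and~5]{LaRoKl2015} and \cite[Proposition~15]{Bru2016}. You have merely spelled out the details of that reduction (the induced $L^2$ isometry, its $\tilde{\Gamma}$-equivariance, and the transfer of hypotheses), which the paper leaves implicit.
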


We now give a proof the main theorem. 

\begin{proof}[Proof of Theorem \ref{OpExstLieGroupMain}]
	Since the reparametrization group $\tilde{\Gamma}$ does not change the starting points of the curves, we only need to consider the term $\inf_{\tilde{\gamma_1},\tilde{\gamma_2}\in\tilde{\Gamma}}\|q_1\star\tilde{\gamma_1}- q_2\star\tilde{\gamma_2}\|_{L^2}$. If $\alpha_1$ or $\alpha_2$ is a generalized PL curve with values in $G$, by Lemma \ref{PLLieGroupCorrespondence}, $q_1$ or $q_2$ is a step map; if $\alpha_1, \alpha_2\in C^1(I,G)$ with $\alpha_1',  \alpha_2' \neq 0$ almost everywhere, then $q_1 = q(\alpha_1)$ and $q_2=q(\alpha_2)$, $q_1, q_2\neq 0$ a.e. are continuous with values in $\mathfrak{g}$. The results then follow immediately from Proposition~\ref{OptimalExstStepLieGroup}.
\end{proof}

\section{The SRVF for the space of curves with values in a homogeneous space}

In the following, let $M=G/K$ be a homogeneous space, where $G$ is a finite dimensional Lie group and $K$ is a compact Lie subgroup of $G$. The aim of this section is to develop the SRVF framework for curves with values in homogeneous spaces. The basic idea behind our construction is to lift the paths in $M$ to paths in the Lie group $G$ and use the previous defined SRVF framework to compare these curves. From here on, we will assume that the metric on $G$ is not only invariant under left multiplication by $G$, but also under right multiplication by $K$. (It is easy to prove the existence of such a metric by averaging over $K$.) Then the metric on $G$ induces a Riemannian metric on $M$ that is invariant under the left action by $G$, see \cite{Petersen1998}.

\subsection{Parametrized curves with values in a homogeneous space}
Denote by $\mathfrak{k}$ the Lie algebra of $K$ and by $\mathfrak{k}^{\perp}$ the orthogonal complement of $\mathfrak{k}$ in $\mathfrak{g}$. Then $\mathfrak{g} = \mathfrak{k}\oplus\mathfrak{k}^{\perp}$. Let $AC^{\perp}(I, G)$ denote the set of absolutely continuous curves in $G$ which are orthogonal to each coset of $K$ they meet. Then we have the following lemma.
\begin{lemma}
	The map $Q$ restricts to a bijection $AC^{\perp}(I, G)\to G\times L^2(I,\mathfrak{k}^{\perp})$.
\end{lemma}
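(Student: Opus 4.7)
The plan is to reduce the orthogonality condition defining $AC^{\perp}(I,G)$ to an algebraic condition on the image $q_\alpha$, and then invoke Proposition~\ref{pro.QBijection} to transfer the bijection statement on $AC(I,G)$ to the restricted setting.

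First I would translate the geometric condition into Lie algebra terms. The coset through $\alpha(t)$ is $\alpha(t)K$, whose tangent space at $\alpha(t)$ is the left translate $L_{\alpha(t)*}\mathfrak{k}$. Since the metric on $G$ is left invariant, $\alpha'(t)$ is orthogonal to this tangent space if and only if $L_{\alpha(t)^{-1}*}\alpha'(t)\in\mathfrak{k}^{\perp}$. Thus $\alpha\in AC^{\perp}(I,G)$ exactly when $L_{\alpha(t)^{-1}*}\alpha'(t)\in\mathfrak{k}^{\perp}$ for a.e. $t$. Inspecting the definition \eqref{eq.q.map} of $q_\alpha$, scaling by $1/\sqrt{\|\alpha'(t)\|}$ preserves direction, so this is in turn equivalent to $q_\alpha(t)\in\mathfrak{k}^{\perp}$ for a.e. $t$ (the condition being trivially satisfied where $\alpha'(t)=0$). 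This shows immediately that $Q(AC^{\perp}(I,G))\subseteq G\times L^2(I,\mathfrak{k}^{\perp})$.

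For the reverse direction, I would take an arbitrary $(\alpha_0,q)\in G\times L^2(I,\mathfrak{k}^{\perp})$ and apply Proposition~\ref{pro.QBijection} to obtain a unique $\alpha\in AC(I,G)$ with $Q(\alpha)=(\alpha_0,q)$. By the defining ODE \eqref{ODE}, we have $L_{\alpha(t)^{-1}*}\alpha'(t)=\|q(t)\|q(t)$ almost everywhere. Since $\mathfrak{k}^{\perp}$ is a linear subspace and $q(t)\in\mathfrak{k}^{\perp}$, this vector lies in $\mathfrak{k}^{\perp}$, so by the same translation between the orthogonality condition and the Lie algebra condition as above, $\alpha\in AC^{\perp}(I,G)$. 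Injectivity of the restricted map is inherited from the injectivity of $Q$ on all of $AC(I,G)$, completing the bijection.

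I do not expect any serious obstacle here: the statement is essentially the observation that the subspace $\mathfrak{k}^{\perp}\subset\mathfrak{g}$ corresponds, under left translation, precisely to the horizontal vectors relative to the coset foliation, and the only care to exercise is the ``a.e.'' nature of $\alpha'$ (which is harmless, since $q_\alpha$ and the ODE condition are only required almost everywhere) and the handling of points where $\alpha'(t)=0$ (where $q_\alpha(t)=0\in\mathfrak{k}^{\perp}$ trivially).
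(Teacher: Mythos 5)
Your proposal is correct and follows essentially the same route as the paper: both arguments reduce the orthogonality condition to $L_{\alpha(t)^{-1}}\alpha'(t)\in\mathfrak{k}^{\perp}$ via left invariance of the metric and observe this is equivalent to $q_\alpha(t)\in\mathfrak{k}^{\perp}$. You simply spell out the surjectivity step through the ODE of Proposition~\ref{pro.QBijection}, which the paper leaves implicit.
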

\begin{proof}
	Since the metric on $G$ is left invariant, $\alpha\in AC^{\perp}(I, G)$ if and only if $L_{\alpha^{-1}}\alpha'\in \mathfrak{k}^{\perp}$, which is equivalent to $q = \frac{L_{\alpha^{-1}}\alpha'}{\sqrt{\|\alpha'\|}}\in\mathfrak{k}^{\perp}$.
\end{proof}
Now let $K$ act on $G\times L^2(I, \mathfrak{k}^{\perp})$ from the right as follows: 
\begin{equation}\label{eq.Kaction}
	(\alpha_0, q)*y = (\alpha_0y, y^{-1}qy),
\end{equation}
where $y\in K, \alpha_0\in G$ and $q\in L^2(I,\mathfrak{k}^{\perp})$. Since the metric on $G$ is bi-invariant with respect to $K$, this action is by isometries. 
\begin{proposition}\label{pro.Bi.M}
	The map $Q$ induces a bijection $AC(I, M)\to (G \times L^2(I, \mathfrak{k}^{\perp}))/K$.
\end{proposition}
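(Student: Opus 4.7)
The plan is to factor the asserted bijection through the space $AC^{\perp}(I, G)$ of horizontal absolutely continuous lifts, using the preceding lemma as the link. Concretely, I would first establish that the projection $\pi\colon G\to M$ induces a surjective map $\Pi\colon AC^{\perp}(I, G)\to AC(I, M)$, $\alpha\mapsto \pi\circ\alpha$, whose fibers are exactly the orbits of the right $K$-action $(\alpha\cdot y)(t):=\alpha(t)y$. Second, I would check that this $K$-action corresponds, under $Q$, to the action $(\alpha_0, q)*y = (\alpha_0 y, y^{-1} q y)$ from \eqref{eq.Kaction}. Together with the bijection $Q\colon AC^{\perp}(I, G)\to G\times L^2(I, \mathfrak{k}^{\perp})$ from the previous lemma, this yields
\[
AC(I, M)\;\cong\;AC^{\perp}(I, G)/K\;\cong\;(G\times L^2(I, \mathfrak{k}^{\perp}))/K,
\]
which is the claim.

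The main obstacle is the existence of absolutely continuous horizontal lifts. Given $\beta\in AC(I, M)$ and a choice of $g_0\in \pi^{-1}(\beta(0))$, I need to produce $\alpha\in AC^{\perp}(I, G)$ with $\alpha(0)=g_0$ and $\pi\circ\alpha=\beta$. Working on a trivializing neighborhood of $\pi$, I would first pick an AC lift $\widetilde{\alpha}$ of $\beta$ (for example, by composing $\beta$ with a smooth local section of $\pi$) and then look for the horizontal lift in the form $\alpha(t)=\widetilde{\alpha}(t)k(t)$ with $k\in AC(I, K)$. The horizontality requirement $L_{\alpha(t)^{-1}}\alpha'(t)\in\mathfrak{k}^{\perp}$ rearranges into an initial value problem $k'(t)=L_{k(t)}X(t)$ with $X\in L^1(I, \mathfrak{k})$ given explicitly by the $\mathfrak{k}$-component of $L_{\widetilde{\alpha}^{-1}}\widetilde{\alpha}'$. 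By the $L^1$-regularity of $K$ already invoked in Proposition~\ref{pro.QBijection}, this ODE has a unique AC solution, which patches across overlapping trivializations to give a global horizontal lift.

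Next I would verify that any two horizontal lifts $\alpha_1, \alpha_2$ of the same $\beta$ differ by a constant element of $K$. Writing $y(t):=\alpha_1(t)^{-1}\alpha_2(t)\in AC(I, K)$ (the inclusion into $K$ follows because $\alpha_1(t)$ and $\alpha_2(t)$ lie in the same coset) and differentiating $\alpha_2 = \alpha_1 y$ yields
\[
L_{\alpha_2^{-1}}\alpha_2' = \operatorname{Ad}(y^{-1})\bigl(L_{\alpha_1^{-1}}\alpha_1'\bigr) + L_{y^{-1}}y'.
\]
Since $\operatorname{Ad}(K)$ preserves $\mathfrak{k}^{\perp}$ (a consequence of the right $K$-invariance of the inner product on $\mathfrak{g}$) and since both left logarithmic derivatives lie in $\mathfrak{k}^{\perp}$, the $\mathfrak{k}$-part $L_{y^{-1}}y'$ must vanish a.e., forcing $y$ to be constant. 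Hence $\Pi^{-1}(\beta)$ is a single right $K$-orbit.

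The last step is a direct computation: using the right $K$-invariance of $\|\cdot\|$ on $G$,
\[
Q(\alpha\cdot y) = \bigl(\alpha(0)y,\; \operatorname{Ad}(y^{-1})q_\alpha\bigr),
\]
which is exactly the $*$-action of \eqref{eq.Kaction} (and lands in $G\times L^2(I,\mathfrak{k}^{\perp})$ since $\operatorname{Ad}(K)\mathfrak{k}^{\perp}=\mathfrak{k}^{\perp}$). Thus $Q$ is $K$-equivariant and descends to the desired bijection on the quotients; the assignment $\beta\mapsto [Q(\alpha)]$, for any horizontal lift $\alpha$ of $\beta$, provides its inverse.
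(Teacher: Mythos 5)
Your argument is correct and follows essentially the same route as the paper: factor through $AC^{\perp}(I,G)$, show that the horizontal lifts of a given $\beta$ exist and form a single right $K$-orbit, and check that $Q$ intertwines right multiplication by $y\in K$ with the action \eqref{eq.Kaction}; the paper simply asserts the existence and uniqueness of horizontal lifts where you supply the ODE construction, and it deduces $\tilde{\alpha}=\alpha y$ from uniqueness of lifts rather than from your (equally valid) observation that $L_{y^{-1}}y'\in\mathfrak{k}\cap\mathfrak{k}^{\perp}=\{0\}$ forces $y$ to be constant. One small imprecision: the horizontality condition rearranges to $k'=-R_{k}u$, a prescribed \emph{right} logarithmic derivative, rather than $k'=L_{k}X$ with $X$ depending only on $t$ (the left-trivialized driving term is $-\operatorname{Ad}(k^{-1})u$ and involves $k$), but this form is equally covered by the $L^1$-regularity of $K$, so the lift exists as you claim.
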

\begin{proof}
	Denote by $\pi: G\to M$ the quotient map, $V_g = \ker \pi_{*g}$ the vertical distribution and $H_g$ the horizontal distribution that is orthogonal to $V_g$ at $g\in G$. For every $g\in G$, $T_gG = V_g\oplus H_g$ and $\pi_{*g}$ restricts to an isomorphism between $H_g$ and $T_{\pi(g)}M$. Given $\beta\in AC(I, M)$ and $\alpha_0\in \pi^{-1}(\beta(0))$, there is a unique horizontal lift $\alpha$ starting at $\alpha_0$, that is,
	\begin{equation}
		\alpha\in AC^{\perp}(I, G),\quad\pi\circ\alpha = \beta,\quad \alpha(0)=\alpha_0.
	\end{equation}
	Now let $\alpha_0, \tilde{\alpha}_0$ be two lifts of $\beta(0)$, $\alpha$ and $\tilde{\alpha}$ be the unique horizontal lifts of $\beta$ starting at $\alpha_0$ and $\tilde{\alpha_0}$, respectively. Then $\alpha_0^{-1}\tilde{\alpha}_0\in K$. Let $y=\alpha_0^{-1}\tilde{\alpha}_0$. By right invariance of the metric on $G$, $\alpha y\in AC^{\perp}(I, G)$ and is also a lift of $\beta$ starting at $\tilde{\alpha}_0$. Thus $\tilde{\alpha} = \alpha y$. Let $(\alpha_0, q) = Q(\alpha)$ and $(\tilde{\alpha}_0, \tilde{q}) = Q(\tilde{\alpha})$. By computation, we have
	\begin{equation}
		(\tilde{\alpha}_0, \tilde{q}) = (\alpha_0, q)*y.
	\end{equation}
	From here, the statement follows.
\end{proof}
This identification is important because $G\times L^2(I,\mathfrak{k}^{\perp})$ has a natural product Riemannian structure. Furthermore, $K$ is compact and acts freely on this product by isometries, so the quotient has an inherited Riemannian metric. This inherited Riemannian metric is invariant under the left action of $G$ and the right action of $\tilde\Gamma$. By Proposition \ref{pro.Bi.M},  we can transfer the Riemannian metric on $(G\times L^2(I, \mathfrak{k}^{\perp}))/K$ to $AC(I, M)$, making the latter into a Riemannian manifold. Furthermore, the induced Riemannian metric on $AC(I, M)$ is invariant under the right action of $\tilde\Gamma$ and the left action of $G$.

Given $\beta_1, \beta_2\in AC(I, M)$, let $\alpha_1$ and $\alpha_2$ be horizontal lifts of $\beta_1$ and  $\beta_2$, respectively. Let
\begin{equation}
	Q(\alpha_1) = (\alpha_1(0), q_1),\quad Q(\alpha_2) = (\alpha_2(0), q_2).
\end{equation}
A minimal geodesic in the quotient $(G\times L^2(I, \mathfrak{k}^{\perp}))/K$ corresponds to a shortest geodesic between two orbits in $G\times L^2(I, \mathfrak{k}^{\perp})$ under the action of $K$. Thus the distance function on $AC(I, M)$ takes the form:
\begin{equation}\label{eq.disACM}
d(\beta_1, \beta_2) = \inf_{y\in K}\sqrt{d_G^2(\alpha_1(0), \alpha_2(0)y)+\|q_1-y^{-1}q_2y\|_{L^2}^2}.
\end{equation}
Consider now the right action of $\tilde{\Gamma}$ and the left action of $G$ on $G\times L^2(I, \mathfrak{k}^{\perp})$. We have the following proposition.
\begin{proposition}\label{pro.InvarianceDisACM}
	The distance function \eqref{eq.disACM} on $AC(I, M)$ is invariant under the action of $G$ and under the action of $\tilde{\Gamma}$.
\end{proposition}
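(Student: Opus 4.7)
The plan is to reduce both invariance statements to properties of the bijection $AC(I,M) \to (G \times L^2(I,\mathfrak{k}^\perp))/K$ established in Proposition~\ref{pro.Bi.M}, combined with the invariance of the distance on $AC(I,G)$. The key point is that both the left $G$-action and the right $\tilde\Gamma$-action on $AC(I,M)$ admit natural lifts to $G \times L^2(I,\mathfrak{k}^\perp)$ that commute with the $K$-action \eqref{eq.Kaction} used in the quotient, so the infimum over $K$-orbits is preserved.

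For the action of $G$: given $g \in G$ and $\beta_i \in AC(I,M)$ with horizontal lifts $\alpha_i$, I would first verify that $g\alpha_i$ is a horizontal lift of $g\beta_i$. This follows because left multiplication by $g$ is a $G$-isometry (by left-invariance of the metric on $G$) and hence preserves the horizontal distribution $H$. Next, a direct computation gives $(g\alpha)^{-1}(g\alpha)' = \alpha^{-1}\alpha'$, so $q_{g\alpha} = q_\alpha$, while the starting point becomes $g\alpha(0)$. By left invariance of $d_G$ we have $d_G(g\alpha_1(0), g\alpha_2(0)y) = d_G(\alpha_1(0), \alpha_2(0)y)$ for every $y \in K$, and the $L^2$-term is unchanged, so the infimum in \eqref{eq.disACM} defining $d(g\beta_1, g\beta_2)$ equals that defining $d(\beta_1,\beta_2)$.

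For the action of $\tilde\Gamma$: given $\gamma \in \tilde\Gamma$, observe that if $\alpha_i$ is a horizontal lift of $\beta_i$, then $\alpha_i \circ \gamma$ is a horizontal lift of $\beta_i \circ \gamma$ (the horizontality condition is pointwise on tangent vectors, and reparametrization only scales them). Starting points are unchanged, and the SRVF transforms as $q_{\alpha\circ\gamma} = q_\alpha \star \gamma$. The computation that makes everything work is that the $K$-twist commutes with reparametrization, i.e.
\[
y^{-1}(q_2 \star \gamma)y \;=\; y^{-1}\bigl((q_2\circ\gamma)\sqrt{\gamma'}\bigr)y \;=\; (y^{-1}q_2 y)\star \gamma,
\]
because $y$ acts pointwise in the fiber while $\gamma$ acts on the parameter. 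Consequently
\[
\|q_1 \star \gamma - y^{-1}(q_2 \star\gamma) y\|_{L^2} \;=\; \|(q_1 - y^{-1}q_2 y)\star\gamma\|_{L^2} \;=\; \|q_1 - y^{-1}q_2 y\|_{L^2},
\]
using the already established reparametrization invariance of the $L^2$-norm under the $\star$-action. Taking the infimum over $y \in K$ yields $d(\beta_1\circ\gamma, \beta_2\circ\gamma) = d(\beta_1,\beta_2)$.

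There is no genuine obstacle here: the whole argument is essentially bookkeeping, and the only point worth stating carefully is that the $K$-action (pointwise in the fiber of $\mathfrak{k}^\perp$) and the $\tilde\Gamma$-action (on the parameter) are mutually independent, so they commute. Once this is spelled out, both invariances follow from the corresponding invariances on $AC(I,G)$ already proved in the previous section.
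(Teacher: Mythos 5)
Your proof is correct and follows the same route as the paper, which simply observes that the distance \eqref{eq.disACM} is induced from the invariant distance on $AC^{\perp}(I,G)$ and declares the rest obvious. You have merely supplied the bookkeeping the paper omits (horizontality of $g\alpha$ and $\alpha\circ\gamma$, $q_{g\alpha}=q_\alpha$, and the commutation $y^{-1}(q_2\star\gamma)y=(y^{-1}q_2y)\star\gamma$), all of which checks out.
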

\begin{proof}
	Since the distance function \eqref{eq.disACM} on $AC(I, M)$ is induced from the distance function \eqref{eq.disACG} on $AC^{\perp}(I, G)$, the proof is obvious.
\end{proof}


\subsection{Unparametrized curves with values in a homogeneous space}

Similarly to the case of unparametrized curves with values in Lie groups, we define an equivalence relation on $AC(I, M)$ as follows:
given $\beta_1$ and $\beta_2$, we say $\beta_1\sim\beta_2$ if and only if there is there exist $\bar{\beta}\in AC(I, M)$, $\gamma_1, \gamma_2 \in\tilde{\Gamma}$ such that 
$\beta_1=\bar{\beta}\circ\gamma_1$ and $\beta_2=\bar{\beta}\circ\gamma_2$ or $\overline{\beta_1\Gamma} = \overline{\beta_2\Gamma}$. Then we define the shape space $\mathcal{S}(I, M)$ as the set of equivalence classes under $\sim$:
\begin{equation}
\mathcal{S}(I,M) = AC(I, M)/\sim.
\end{equation}
The induced quotient distance on $\mathcal{S}(I, M)$ is as follows:
\begin{align}\label{eq.disSM}
d([\beta_1], [\beta_2]) &= \inf_{\gamma_1,\gamma_2\in\tilde{\Gamma}}d(\beta\circ\gamma_1, \beta_2\circ\gamma_2)\notag\\
& = \inf_{\substack{y\in K\\ \gamma_1,\gamma_2\in\tilde{\Gamma}}}\sqrt{d_G^2(\alpha_1(0),\alpha_2(0)y)+ \|q_1\star\gamma_1-y^{-1}(q_2\star\gamma_2)y\|_{L^2}^2},
\end{align}
where $(\alpha_1(0), q_1) = Q(\alpha_1),\, (\alpha_2(0), q_2) = Q(\alpha_2)$, $\alpha_1, \alpha_2\in AC(I,\mathfrak{k}^{\perp})$ are two horizontal lifts of $\beta_1$ and $\beta_2$, respectively.

We have the following theorem to show the non-existence of optimal reparametrizations between any two absolutely continuous curves with values in homogeneous spaces. 
\begin{thm}
	If $\dim M\geq 2$, there exists a pair of Lipschitz curves with values in $M$ such that the infimum \eqref{eq.disSM} cannot be obtained by any pair of reparametrizations.
\end{thm}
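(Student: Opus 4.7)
The plan is to lift the counterexample of Theorem \ref{CounterExampleLieGroup} to the homogeneous setting by choosing the vectors $w_1, w_2$ inside the horizontal subspace $\mathfrak{k}^{\perp}$, and then exploit compactness of $K$ to reduce the non-existence question to the Lie group case already settled.

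Since $\dim M = \dim \mathfrak{k}^{\perp} \geq 2$, we can pick an orthonormal pair $\{w_1, w_2\} \subset \mathfrak{k}^{\perp}$ and construct $p_1, p_2 \in L^2(I, \mathfrak{k}^{\perp})$ by the same formulas as in the proof of Theorem \ref{CounterExampleLieGroup}. Let $\alpha_1, \alpha_2 \in AC^{\perp}(I, G)$ be the unique horizontal lifts with $Q(\alpha_i) = (e, p_i)$; both are Lipschitz because $p_1, p_2$ are bounded. Setting $\beta_i := \pi \circ \alpha_i$ produces a pair of Lipschitz curves in $M$.

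Suppose, toward a contradiction, that some pair $\gamma_1, \gamma_2 \in \tilde{\Gamma}$ attains the infimum in \eqref{eq.disSM}. Since $K$ is compact, the inner infimum over $y \in K$ in \eqref{eq.disACM} applied to $\beta_1 \circ \gamma_1$ and $\beta_2 \circ \gamma_2$ is attained at some $y^{*} \in K$, and the triple $(y^{*}, \gamma_1, \gamma_2)$ then realizes the full infimum. For any other $\tilde{\gamma}_1, \tilde{\gamma}_2 \in \tilde{\Gamma}$ the Lie group distance $d(\alpha_1 \circ \tilde{\gamma}_1, \alpha_2 y^{*} \circ \tilde{\gamma}_2)$ dominates $d(\beta_1 \circ \tilde{\gamma}_1, \beta_2 \circ \tilde{\gamma}_2)$, which in turn dominates $d([\beta_1], [\beta_2])$. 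Consequently $(\gamma_1, \gamma_2)$ must be optimal reparametrizations for the Lie group pair $(\alpha_1, \alpha_2 y^{*})$ in $AC(I, G)$.

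A short computation using \eqref{eq.Kaction} gives $Q(\alpha_2 y^{*}) = (\alpha_2(0) y^{*}, (y^{*})^{-1} p_2 y^{*})$, so the pair $(\alpha_1, \alpha_2 y^{*})$ corresponds under $Q$ to a Bruveris-type pair in which one factor uses $\{w_1, w_2\}$ and the other uses the rotated orthonormal pair $\{(y^{*})^{-1} w_1 y^{*}, (y^{*})^{-1} w_2 y^{*}\} \subset \mathfrak{k}^{\perp}$. The measure-theoretic obstruction behind the proof of Theorem \ref{CounterExampleLieGroup} (inherited from \cite{Bru2016}) depends only on the step-function structure of $p_1, p_2$ over the interleaved pair $(A, B)$ and not on the specific orthonormal vectors chosen, so it rules out optimal reparametrizations for this rotated pair as well, producing the desired contradiction. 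The main obstacle is precisely this last step: one must check that the Bruveris non-existence mechanism is insensitive to replacing one orthonormal pair in $\mathfrak{k}^{\perp}$ by another of the form obtained by $K$-conjugation, uniformly in $y \in K$, so that the compactness reduction of the previous paragraph remains valid.
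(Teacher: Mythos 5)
Your construction is the same as the paper's: the paper's proof also takes the Bruveris-type pair $p_1,p_2$ built from an orthonormal pair $\{w_1,w_2\}\subset\mathfrak{k}^{\perp}$, lifts horizontally, and projects to $M$. You go further than the paper in explicitly confronting the extra infimum over $K$ in \eqref{eq.disSM}, and your reduction is sound up to a point: if $(\gamma_1,\gamma_2)$ attained the quotient distance and $y^{*}$ attained the inner infimum, then $(\gamma_1,\gamma_2)$ would indeed be optimal for the Lie-group pair $\left(\alpha_1,\alpha_2 y^{*}\right)$, whose SRV data is $\left(p_1,\operatorname{Ad}_{(y^{*})^{-1}}p_2\right)$.

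The gap is exactly where you flag it, and it is a genuine one rather than a routine verification. The non-attainment mechanism of \cite{Bru2016} is \emph{not} insensitive to conjugating only one of the two functions: it relies essentially on the fact that $p_1$ and $p_2$ coincide on $A$ (both equal $v_1$ there) and differ only on $B$, which is what forces near-optimal reparametrizations to degenerate. After replacing $p_2$ by $\operatorname{Ad}_{(y^{*})^{-1}}p_2$ with $y^{*}\neq e$, the two functions generically no longer agree on $A$, the structure of the matching problem changes, and nothing you have said rules out that the infimum for this rotated pair is attained. Nor can you dismiss $y^{*}\neq e$ cheaply: writing $F(y)=d_G^2(e,y)+\inf_{\gamma}\|p_1\star\gamma-y^{-1}p_2y\|_{L^2}^2$, the penalty $d_G^2(e,y)$ is second order in $y$ near $e$ while the change in the second term can be first order, so the naive estimate does not show that $y=e$ is the unique minimizer. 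To close the argument you would need either (i) a proof that every minimizing $y$ acts as the identity on $\operatorname{span}\{w_1,w_2\}$ (after which \cite{Bru2016} applies verbatim), or (ii) a version of the Bruveris argument valid for the pair $\left(p_1,\operatorname{Ad}_{y^{-1}}p_2\right)$ uniformly in $y\in K$. Neither is supplied. (For what it is worth, the paper's own two-line proof does not address the $K$-infimum at all, so you have correctly isolated the one point that actually requires work beyond Theorem~\ref{CounterExampleLieGroup}.)
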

\begin{proof}
	To get such Lipschitz curves with values in $M$, we can project a pair of horizontal Lipschitz curves in $G$ to $M$. The construction of such Lipschitz curves in $G$ is the same as the construction stated in Theorem \ref{CounterExampleLieGroup} except that we set $\{w_1, w_2\}$ to be an orthogonal basis for a two dimensional subspace of $\mathfrak{k}^{\perp}$ instead of $\mathfrak{g}$.
\end{proof}

The results concerning the existence of optimal reparametrizations for curves with values in Lie groups can be extended to curves with values in homogeneous spaces. Before we state the main theorem, the definition of a symmetric space is needed.

\begin{defi}
	A Riemannian manifold $M$ is called a {\it symmetric space} if for every point $p\in M$ there exists an isometry $s_p: M\to M$ such that for any tangent vector $X\in T_pM$,
	\begin{equation}
	s_p(\operatorname{Exp}^M_p(tX)) = \operatorname{Exp}^M_p(-tX),
	\end{equation}
	where $\operatorname{Exp}^M_p: T_pM\to M$ is the Riemannian exponential map.
\end{defi}

In fact, a symmetric space is a special case of a Riemannian homogeneous space. The connected component of the isometry group of $M$ is a Lie group $G$ that acts on $M$ transitively. Denote by $K$ the subgroup of $G$ that fixes a point $p\in M$. Then there is a diffeomorphism $G/K\to M$. Under this diffeomorphism, the metric on $M$ corresponds to the metric on $G/K$ that is induced by a metric on $G$ that is left-invariant with respect to $G$ and bi-invariant with respect to $K$; see \cite{Mi2008}. 

Now, we give the main theorem of the existence of optimal reparametrizations for curves with values in homogeneous spaces.
\begin{thm}\label{OpExstHomogeneousMain}
	Let $M = G/K$ be a homogeneous space and let $\beta_1, \beta_2\in AC(I, M)$. Assume in addition that one of the following two conditions is satisfied:
	\begin{enumerate}[a)]
		\item $M$ is a symmetric space and one of $\beta_1$ or $\beta_2$ is a piecewise geodesic;
		\item $\beta_1, \beta_2\in C^1(I, M)$ with $\beta_1', \beta_2' \neq 0$ a.e.
	\end{enumerate}
	Then there exist $\gamma_1, \gamma_2\in\tilde{\Gamma}$ such that
	\begin{equation}
	d(\beta_1\circ\gamma_1, \beta_2\circ\gamma_2)=d([\beta_1], [\beta_2]).
	\end{equation}
	Furthermore, if $M$ is a symmetric space,  $\beta_1$ and $\beta_2$ are both piecewise geodesics, then this distance above can be realized by a pair of piecewise linear functions in $\tilde{\Gamma}$.
\end{thm}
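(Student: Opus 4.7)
The plan is to reduce to the Lie group result Theorem~\ref{OpExstLieGroupMain} via horizontal lifting and then handle the extra infimum over the compact group $K$ by a compactness argument. First, I would fix horizontal lifts $\alpha_1,\alpha_2\in AC^{\perp}(I,G)$ of $\beta_1,\beta_2$ with corresponding SRVFs $(q_1,q_2)$. For each $y\in K$ the curve $\alpha_2 y$ is another horizontal lift of $\beta_2$, now based at $\alpha_2(0)y$, with SRVF $\operatorname{Ad}(y^{-1})q_2$, so the distance \eqref{eq.disSM} rewrites as
\[
d([\beta_1],[\beta_2])^2 \;=\; \inf_{y\in K}\!\Big(d_G^2(\alpha_1(0),\alpha_2(0)y) + \inf_{\gamma_1,\gamma_2\in\tilde\Gamma}\|q_1\star\gamma_1 - \operatorname{Ad}(y^{-1})(q_2\star\gamma_2)\|_{L^2}^2\Big).
\]
For each fixed $y$ the inner infimum is precisely the reparametrization-matching problem in the Lie group for the pair $(\alpha_1,\alpha_2 y)$, to which I intend to apply Theorem~\ref{OpExstLieGroupMain}.

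The key transfer step is to verify that the hypotheses of Theorem~\ref{OpExstLieGroupMain} pass from $\beta_i$ to the horizontal lifts. In case (b) the horizontal distribution is smooth and $\pi_{*g}|_{H_g}$ is a linear isomorphism onto $T_{\pi(g)}M$, so the lifts are $C^1$ with non-vanishing derivative a.e., a property preserved by right multiplication by $y$. Case (a) is where the symmetric-space assumption is essential: in a symmetric space the geodesics through $\pi(g)$ have the form $t\mapsto \pi(g\exp(tv))$ with $v\in\mathfrak{k}^{\perp}$, and the curve $g\exp(tv)$ is automatically horizontal because $\operatorname{Ad}(K)$ preserves $\mathfrak{k}^{\perp}$. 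Concatenating such segments at the breakpoints of a piecewise geodesic shows that its horizontal lift is a generalized PL curve in $G$. Under either hypothesis, Theorem~\ref{OpExstLieGroupMain} applied to $(\alpha_1,\alpha_2 y)$ yields $(\gamma_1^y,\gamma_2^y)\in\tilde\Gamma\times\tilde\Gamma$ realizing the inner infimum, which I denote $f(y)$.

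The main obstacle is to extract a minimizer from the outer infimum over $y\in K$. I would show that $f:K\to\mathbb{R}$ is continuous via the estimate
\[
|f(y)-f(y')| \;\le\; \|\operatorname{Ad}(y^{-1})-\operatorname{Ad}((y')^{-1})\|_{\mathrm{op}}\,\|q_2\|_{L^2},
\]
obtained by a near-optimality argument at one of the two points, together with the fact that the SRVF $L^2$-norm is invariant under reparametrization and that $\operatorname{Ad}:K\to GL(\mathfrak{g})$ is continuous on the compact group $K$. Combined with the continuity of $y\mapsto d_G(\alpha_1(0),\alpha_2(0)y)$, this makes the outer objective continuous on $K$, and by compactness it attains its minimum at some $y^*\in K$; the pair $(\gamma_1^{y^*},\gamma_2^{y^*})$ then realizes $d([\beta_1],[\beta_2])$. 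For the ``furthermore'' clause, if both $\beta_i$ are piecewise geodesics then both horizontal lifts are generalized PL, so both $q_1$ and $\operatorname{Ad}((y^*)^{-1})q_2$ are step maps, and the strengthened conclusion of Proposition~\ref{OptimalExstStepLieGroup} guarantees that $\gamma_1^{y^*},\gamma_2^{y^*}$ may be chosen piecewise linear.
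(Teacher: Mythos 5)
Your proposal is correct and follows essentially the same route as the paper: lift to horizontal curves in $G$, prove continuity of the $K$-indexed infimum (the paper's Lemma~\ref{FyContinuous}) and use compactness of $K$ to extract an optimal $y_0$, then apply the Lie-group reparametrization result at that $y_0$, with the symmetric-space hypothesis entering exactly as in the paper's Lemma~\ref{PGgPL} to make piecewise geodesics lift to generalized PL curves. The only cosmetic remark is that your Lipschitz estimate is for the unsquared infimum (as is the paper's, implicitly), which still yields the needed continuity.
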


Note that in the case of homogeneous spaces, the distance between $[\beta_1]$ and $[\beta_2]$ on $\mathcal{S}(I, M)$ is the infimum between the orbits of their horizontal lifts under the action of $K$ and $\tilde{\Gamma}$ both, which means we need to find not only the optimal reparametrizations but also the optimal $y_0\in K$. The following lemma provides the key ingredient to achieve this result.

\begin{lemma}\label{FyContinuous}
	Let $g_1, g_2\in G$ and $q_1, q_2\in L^2(I, \mathfrak{k}^{\perp})$. Then the map 
	$\mathcal{F}: K\to \mathbb{R}$ given by
	\begin{equation}
	\mathcal{F}(y)=\inf_{\gamma_1, \gamma_2\in\tilde{\Gamma}}\left(d_G^2(g_1, g_2y)+\|q_1\star\gamma_1-y^{-1}(q_2\star\gamma_2)y\|_{L^2}^2\right)
	\end{equation}
	is continuous. Furthermore, there exists $y_0\in K$ minimizing $\mathcal{F}$.
\end{lemma}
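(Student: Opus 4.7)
The plan is to first reduce the second term in $\mathcal F(y)$ to a purely quotient-type distance by using the bi-invariance of the metric, then establish uniform continuity of this quotient distance in $y$, and finally invoke compactness of $K$.

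The key observation is that conjugation by $y^{-1}$ commutes with the reparametrization action: $y^{-1}(q_2\star\gamma_2)y = (y^{-1}q_2y)\star\gamma_2$, since conjugation acts pointwise and $\sqrt{\gamma_2'}$ is scalar. Setting $\tilde q_2^{\,y}(t):=y^{-1}q_2(t)y$ and
\begin{equation}
\phi(y):=\inf_{\gamma_1,\gamma_2\in\tilde\Gamma}\bigl\|q_1\star\gamma_1-\tilde q_2^{\,y}\star\gamma_2\bigr\|_{L^2},
\end{equation}
we have $\mathcal F(y)=d_G^2(g_1,g_2y)+\phi(y)^2$. The first term is continuous in $y$ because $d_G$ and right multiplication are continuous, so the real content lies in showing $\phi$ is continuous.

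The central estimate is $|\phi(y_1)-\phi(y_2)|\le \|\tilde q_2^{\,y_1}-\tilde q_2^{\,y_2}\|_{L^2}$. To prove it, given $\epsilon>0$ pick $\gamma_1^*,\gamma_2^*\in\tilde\Gamma$ almost-realizing $\phi(y_2)$; then, using the triangle inequality in $L^2$ together with the fact that the $\star$-action preserves the $L^2$ norm (change of variables $u=\gamma_2^*(t)$, since $q\in L^2$ and $\gamma_2^*$ is absolutely continuous with $\gamma_2^{*\,\prime}\ge 0$ a.e.),
\begin{equation}
\phi(y_1)\le \bigl\|q_1\star\gamma_1^*-\tilde q_2^{\,y_1}\star\gamma_2^*\bigr\|_{L^2}\le \bigl\|q_1\star\gamma_1^*-\tilde q_2^{\,y_2}\star\gamma_2^*\bigr\|_{L^2}+\bigl\|\tilde q_2^{\,y_2}-\tilde q_2^{\,y_1}\bigr\|_{L^2},
\end{equation}
and letting $\epsilon\to 0$ gives the estimate in one direction; the other direction follows by symmetry.

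It then remains to verify that $y\mapsto \tilde q_2^{\,y}\in L^2(I,\mathfrak k^\perp)$ is continuous. Pointwise this is immediate from continuity of the adjoint action $\mathrm{Ad}:K\to\mathrm{GL}(\mathfrak g)$. To upgrade to $L^2$-continuity, observe that $\|\tilde q_2^{\,y}(t)\|=\|q_2(t)\|$ by bi-invariance of the metric on $G$ (which is exactly the assumption that $K$ acts by isometries in \eqref{eq.Kaction}), so $\|\tilde q_2^{\,y_n}(t)-\tilde q_2^{\,y}(t)\|^2\le 4\|q_2(t)\|^2\in L^1(I)$, and dominated convergence applies as $y_n\to y$. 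This gives continuity of $\phi$, hence of $\mathcal F$, and since $K$ is compact the infimum is attained.

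The main obstacle is the inequality for $|\phi(y_1)-\phi(y_2)|$: it must be handled carefully because the inf over $\tilde\Gamma\times\tilde\Gamma$ does not literally give a triangle inequality via composition (elements of $\tilde\Gamma$ are not invertible). The trick is to use the \emph{same} pair $(\gamma_1^*,\gamma_2^*)$ as a test pair for both $y_1$ and $y_2$, which bypasses any composition issues and reduces the problem to the elementary isometric property of $\star$ on $L^2$ together with continuity of $\mathrm{Ad}$.
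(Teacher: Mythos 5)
Your proof is correct and follows essentially the same route as the paper's: split off the continuous $d_G^2$ term, show that the reparametrization infimum is $1$-Lipschitz as a function of $y^{-1}q_2y \in L^2(I,\mathfrak{g})$ by testing with a near-optimal pair of reparametrizations, check that $y\mapsto y^{-1}q_2y$ is continuous into $L^2$, and conclude by compactness of $K$. The only cosmetic differences are that the paper first collapses the double infimum over $\tilde\Gamma\times\tilde\Gamma$ to a single infimum over $\Gamma$ (via density and invariance) before applying the Lipschitz estimate, whereas you keep the double infimum and reuse the same test pair via the commutation of conjugation with the $\star$-action, and your dominated-convergence argument for the $L^2$-continuity of conjugation is somewhat more explicit than the paper's appeal to smoothness of $(y,q)\mapsto y^{-1}qy$.
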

\begin{proof}
	Since $q\Gamma$ is dense in $q\tilde{\Gamma}$ and the distance function is reparametrization invariant, we have
	\begin{align}
	\mathcal{F}(y) &= \inf_{\gamma_1, \gamma_2\in\tilde{\Gamma}}\left(d_G^2(g_1, g_2y)+\|q_1\star\gamma_1-y^{-1}(q_2\star\gamma_2)y\|_{L^2}^2\right)\notag\\
	&= d_G^2(g_1, g_2y)+\inf_{\gamma_1, \gamma_2\in\tilde{\Gamma}}\|q_1\star\gamma_1-y^{-1}(q_2\star\gamma_2)y\|_{L^2}^2\notag\\
	&= d_G^2(g_1, g_2y)+\inf_{\gamma_1, \gamma_2\in\Gamma}\|q_1\star\gamma_1-y^{-1}(q_2\star\gamma_2)y\|_{L^2}^2\notag\\
	&= d_G^2(g_1, g_2y)+\inf_{\gamma\in\Gamma}\|q_1\star\gamma-y^{-1}q_2y\|_{L^2}^2.
	\end{align}
	It is easy to see that the first term $\mathcal{F}_1(y) = d_G^2(g_1, g_2y)$ is continuous. For $\mathcal{F}_2(y) = \inf_{\gamma\in\Gamma}\|q_1\star\gamma-y^{-1}q_2y\|_{L^2}^2$, we have 
	\begin{equation}
		\mathcal{F}_2 = \Phi_2\circ\Phi_1,
	\end{equation}
	where
	\begin{align}
		&\Phi_1: K \to \mathfrak{g}\notag\\
		&\Phi_1 (y) = y^{-1}q_2 y
	\end{align}
	and 
	\begin{align}
		&\Phi_2: L^2(I, \mathfrak{g})\to \mathbb R\notag\\
		&\Phi_2(u) = \inf_{\gamma\in\Gamma}\|q_1\star\gamma-u\|_{L^2}^2.
	\end{align}
	Since the multiplication and inversion maps on the Lie group $G$ are smooth, the map $G\times G\to G$ defined by $(y, g)\mapsto y^{-1}gy$ is smooth, thus the differential $K\times\mathfrak{g}\to\mathfrak{g}$, $(y, q)\mapsto y^{-1}qy$ is smooth. Therefore, $\Phi_1(y)$ is continuous. Also we have  
	\begin{align}
	\inf_{\gamma\in\Gamma}\|q_1\star\gamma-u_1\|_{L^2}
	\leq\|q_1\star\gamma-u_1\|_{L^2}\leq \|u_1-u_2\|_{L^2} + \|q_1\star\gamma-u_2\|_{L^2}
	\end{align}
	for any $\tilde{\gamma}\in \Gamma$. Taking the infimum over $\gamma\in\Gamma$, we obtain
	\begin{align}
	\Phi_2(u_1) - \Phi_2(u_2) \leq \|u_1-u_2\|_{L^2}.
	\end{align}
	Similarly,
	\begin{align}
	\Phi_2(u_2) - \Phi_2(u_1) \leq \|u_1-u_2\|_{L^2}.
	\end{align}
	Therefore, $\Phi_2(u)$ is continuous. Then the composition $\mathcal{F}_2(y)$ is continuous and thus $\mathcal{F}$ is continuous. Note that $K$ is compact. Thus there exists $y_0\in K$ realizing the minimum of $F(y)$.
\end{proof}

\begin{proposition}\label{OptimalExstHomogeneous}
	Let $g_1, g_2\in G$ and $q_1, q_2\in L^2(I, \mathfrak{k}^{\perp})$. Assume in addition that one of the following two conditions is satisfied:
	\begin{enumerate}[a)]
		\item $q_1$ or $q_2$ is a step map;
		\item $q_1$ and $q_2$ are continuous with $q_1, q_2\neq 0$ a.e.
	\end{enumerate}
	Then there exist $y_0\in K$, $\gamma_1, \gamma_2\in\tilde{\Gamma}$ such that
	\begin{multline}\label{eq.infdSquare}
	d_G^2(g_1, g_2y_0)+\|q_1\star\gamma_1-y_0^{-1}(q_2\star\gamma_2)y_0\|_{L^2}^2\\
	=\inf_{\substack{y\in K\\ \tilde{\gamma_1}, \tilde{\gamma_2}\in\tilde{\Gamma}}}d_G^2(g_1, g_2y)+\|q_1\star\tilde{\gamma_1}-y^{-1}(q_2\star\tilde{\gamma_2})y\|_{L^2}^2
	\end{multline}
	Furthermore, if $q_1$ and $q_2$ are both step maps, then this infimum can be realized by $y_0\in K$ and a pair of piecewise linear functions $\gamma_1, \gamma_2\in\tilde{\Gamma}$.
\end{proposition}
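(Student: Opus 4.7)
The strategy is to decouple the joint infimum over $y \in K$ and $\gamma_1,\gamma_2 \in \tilde{\Gamma}$ by first minimizing over the reparametrizations for each fixed $y$, and then minimizing the resulting function $\mathcal{F}(y)$ over the compact group $K$. Specifically, writing the joint infimum on the right-hand side of \eqref{eq.infdSquare} as $\inf_{y \in K} \mathcal{F}(y)$ where $\mathcal{F}$ is the function from Lemma~\ref{FyContinuous}, that lemma already provides us with a minimizer $y_0 \in K$. It therefore remains to show that for this particular $y_0$, the inner infimum over $\gamma_1, \gamma_2$ defining $\mathcal{F}(y_0)$ is actually attained.

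The key observation that enables the last step is that the conjugation by the constant element $y_0$ commutes with the right action $\star$ of $\tilde{\Gamma}$, i.e.\
\begin{equation*}
y_0^{-1}(q_2 \star \gamma_2) y_0 = (y_0^{-1} q_2 y_0) \star \gamma_2,
\end{equation*}
because $(q_2 \star \gamma_2)(t) = q_2(\gamma_2(t))\sqrt{\gamma_2'(t)}$ and conjugation is pointwise linear. Setting $\tilde{q}_2 := y_0^{-1} q_2 y_0 \in L^2(I,\mathfrak{k}^\perp)$, the inner infimum becomes $\inf_{\gamma_1,\gamma_2 \in \tilde{\Gamma}} \|q_1 \star \gamma_1 - \tilde{q}_2 \star \gamma_2\|_{L^2}^2$, which is exactly the situation treated by Proposition~\ref{OptimalExstStepLieGroup}. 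To invoke it, I would check that conjugation by $y_0$ preserves the two hypotheses: if $q_2$ is a step map with jumps at $t_0 < \cdots < t_k$, then $\tilde{q}_2$ is again a step map on the same partition (with constant values $y_0^{-1} q_2^{(j)} y_0$); and if $q_2$ is continuous with $q_2 \neq 0$ a.e., then $\tilde{q}_2$ inherits these properties because $v \mapsto y_0^{-1} v y_0$ is a linear isomorphism of $\mathfrak{g}$. Thus, under hypothesis (a) (resp.\ (b)), the pair $(q_1, \tilde{q}_2)$ satisfies condition (a) (resp.\ (b)) of Proposition~\ref{OptimalExstStepLieGroup}, and we obtain optimal $\gamma_1, \gamma_2 \in \tilde{\Gamma}$ realizing the inner infimum, which are moreover piecewise linear when both $q_1, q_2$ are step maps.

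Combining these pieces, the triple $(y_0, \gamma_1, \gamma_2)$ realizes the joint infimum \eqref{eq.infdSquare}, and the "furthermore" statement follows from the last assertion of Proposition~\ref{OptimalExstStepLieGroup}. I expect the main technical subtlety to be the verification that the $K$-conjugation preserves the structural assumptions on $q_2$ together with the commutation identity $y_0^{-1}(q_2 \star \gamma_2) y_0 = \tilde{q}_2 \star \gamma_2$; everything else is a matter of carefully separating the two infima and invoking the two previously established results (Lemma~\ref{FyContinuous} for the outer minimization over the compact group $K$, and Proposition~\ref{OptimalExstStepLieGroup} for the inner one over $\tilde{\Gamma}$). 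No further limiting or compactness argument over $\tilde{\Gamma}$ itself is required, since it has already been carried out in the Lie group setting.
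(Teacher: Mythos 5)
Your proposal is correct and follows essentially the same route as the paper: apply Lemma~\ref{FyContinuous} to obtain the minimizer $y_0\in K$, then invoke Proposition~\ref{OptimalExstStepLieGroup} to realize the inner infimum for that fixed $y_0$. Your explicit verification that $y_0^{-1}(q_2\star\gamma_2)y_0=(y_0^{-1}q_2y_0)\star\gamma_2$ and that conjugation preserves the step-map/continuity hypotheses is a detail the paper leaves implicit, and it is a worthwhile addition rather than a deviation.
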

\begin{proof}
	Using Lemma \ref{FyContinuous}, there exists $y_0\in K$ minimizing the function
	\begin{align}
	F(y) = d_G^2(g_1, g_2y)+\inf_{\tilde{\gamma_1}, \tilde{\gamma_2}\in\tilde{\Gamma}}\|q_1\star\tilde{\gamma_1}-y^{-1}(q_2\star\tilde{\gamma_2})y\|_{L^2}^2.
	\end{align}
	For this particular optimal $y_0\in K$, by Proposition \ref{OptimalExstStepLieGroup}, we obtain $\gamma_1, \gamma_2\in\tilde{\Gamma}$ realizing the infimum of the second term of $F$ above and thus realizing the infimum in (\ref{eq.infdSquare}).
\end{proof}

For symmetric spaces, we have the following lemma.
\begin{lemma}\label{PGgPL}
	let $M = G/K$ be a symmetric space. Then every horizontal generalized PL curve with values in $G$ projects to a piecewise geodesic with values in $M$. Conversely, every piecewise geodesic with values in $M$ can be lifted to a horizontal generalized PL curve with values in $G$.
\end{lemma}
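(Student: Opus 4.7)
The plan is to reduce both directions to the following standard characterization of geodesics in a Riemannian symmetric space $M = G/K$: for any $g\in G$ and any $X\in\mathfrak{k}^\perp$, the curve $t\mapsto\pi(g\exp(tX))$ is a geodesic in $M$, its horizontal lift starting at $g$ is precisely $t\mapsto g\exp(tX)$, and conversely every geodesic in $M$ arises this way. This follows from the symmetric space identity $[\mathfrak{k}^\perp,\mathfrak{k}^\perp]\subset\mathfrak{k}$ together with the observation that the horizontal distribution is $H_g = L_{g*}\mathfrak{k}^\perp$; a convenient reference is \cite{Mi2008}. I would state this as a preliminary remark and then apply it piece by piece.

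For the forward direction, let $\alpha$ be a horizontal generalized PL curve with breakpoints $0=t_0\le t_1\le\cdots\le t_k=1$, so that on each $[t_{j-1},t_j]$ one has $\alpha(t)=g_j\exp(v_j(t-t_{j-1}))$ for some $g_j\in G$ and $v_j\in\mathfrak{g}$. Using $\tfrac{d}{dt}\exp(tX)=L_{\exp(tX)*}X$, a short computation gives $L_{\alpha(t)^{-1}*}\alpha'(t)=v_j$ on $(t_{j-1},t_j)$, so horizontality (via the lemma characterizing $AC^\perp(I,G)$) forces $v_j\in\mathfrak{k}^\perp$ for every $j$. The cited characterization then yields that $\pi\circ\alpha$ restricted to each $[t_{j-1},t_j]$ is a geodesic segment, whence $\pi\circ\alpha$ is a piecewise geodesic in $M$.

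For the converse, given a piecewise geodesic $\beta$ in $M$ with breakpoints $0=t_0\le t_1\le\cdots\le t_k=1$, I construct a horizontal generalized PL lift inductively. Choose any $\alpha_0\in\pi^{-1}(\beta(0))$. Since $\beta|_{[t_0,t_1]}$ is a geodesic starting at $\pi(\alpha_0)$, the characterization supplies a unique $v_1\in\mathfrak{k}^\perp$ with $\beta(t)=\pi(\alpha_0\exp(v_1t))$ on $[t_0,t_1]$; set $\alpha(t)=\alpha_0\exp(v_1t)$. Now suppose $\alpha$ has been defined on $[t_0,t_{j-1}]$ as a horizontal generalized PL lift of $\beta$ there. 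Apply the characterization to the geodesic $\beta|_{[t_{j-1},t_j]}$ starting at $\pi(\alpha(t_{j-1}))$ to obtain a unique $v_j\in\mathfrak{k}^\perp$ with $\beta(t)=\pi\bigl(\alpha(t_{j-1})\exp(v_j(t-t_{j-1}))\bigr)$, and extend by $\alpha(t)=\alpha(t_{j-1})\exp(v_j(t-t_{j-1}))$ on $[t_{j-1},t_j]$. After $k$ steps, $\alpha$ is continuous (by construction at every breakpoint), of the required PL form, and horizontal on each open subinterval because each $v_j\in\mathfrak{k}^\perp$; since the breakpoint set has Lebesgue measure zero, $\alpha\in AC^\perp(I,G)$.

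The main obstacle, such as it is, lies in the preliminary characterization of geodesics in symmetric spaces and their horizontal lifts; once that is in hand both directions are essentially bookkeeping on each linear piece. The rest of the argument is routine: the PL form is preserved because each segment is an exponential, continuity at breakpoints holds automatically, and horizontality is only required almost everywhere.
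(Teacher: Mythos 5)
Your proposal is correct and follows essentially the same route as the paper: both rest on the symmetric-space identity $g\operatorname{Exp}^M_K(t\,\pi_*Y)=\pi(g\exp(tY))$ for $Y\in\mathfrak{k}^{\perp}$ (the paper cites \cite[28.5.10]{Mi2008}, you derive it from $[\mathfrak{k}^{\perp},\mathfrak{k}^{\perp}]\subset\mathfrak{k}$ and the left-invariance of the horizontal distribution) together with equivariance of $\pi$ under left translation. You merely spell out the segment-by-segment bookkeeping and the inductive construction of the lift that the paper compresses into one line.
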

\begin{proof}
	Since the action of $G$ on $M$ by left multiplication is by isometries, every geodesic can be expressed in the form $g\operatorname{Exp}^M_K(tV)$, where $g\in G$, $V\in T_K M$. In addition, the submersion $\pi: G\to M, \pi(g) = gK$ is equivariant under the actions of $G$ on $G$ and $M$ by left multiplication, that is, for $g, x\in G$, $g\pi(x) = \pi(gx)$. By \cite[28.5.10]{Mi2008}, the result follows from $g\operatorname{Exp}_K^M (t \pi_{*} Y)= g \pi(\exp(t Y)) = \pi(g\exp(tY)),\ Y\in\mathfrak{k}^{\perp}$.
\end{proof}

We now give a proof of the main theorem in the case of homogeneous spaces.
\begin{proof}[Proof of Theorem \ref{OpExstHomogeneousMain}]
	By lifting $\beta_1, \beta_2$ to horizontal curves $\alpha_1, \alpha_2$ with values in the Lie group $G$, a) follows immediately from Proposition~\ref{OptimalExstHomogeneous}, Lemma \ref{PLLieGroupCorrespondence} and Lemma \ref{PGgPL}; for b), we obtain $\alpha_1, \alpha_2\in AC(I, \mathfrak{k}^{\perp})$ with $\alpha_1', \alpha_2' \neq 0$ a.e.. Then $q_1=q(\alpha_1)$ and $q_2=q(\alpha_2)$ are continuous with $q_1, q_2\neq 0$ a.e. in $\mathfrak{k}^{\perp}$. The results then follow from Proposition~\ref{OptimalExstHomogeneous}.
\end{proof}

\subsection{Other quotient spaces}

By Proposition \ref{pro.InvarianceDisACM}, $G$ acts on $AC(I, M)$ by isometries. Therefore, we can consider also the quotient space $AC(I, M)/G$. There the induced distance function is given by
\begin{equation}
d([\beta_1], [\beta_2]) = \inf_{y\in K,\ g\in G}\sqrt{d_G^2(\alpha_1(0), g\alpha_2(0)y)+\|q_1-y^{-1}q_2 y\|_{L^2}^2}.
\end{equation}
Note that for every $y\in K$, we can choose $g=\alpha_2(0)y\alpha_1^{-1}(0)$ such that $$d(\alpha_1(0), g\alpha_2(0)y)=0.$$ Thus we just need to find $y\in K$ minimizing $\|q_1-y^{-1}q_2 y\|_{L^2}$. This distance function can be simplified to 
\begin{equation}
d([\beta_1], [\beta_2])=\inf_{y\in K}\|q_1-y^{-1}q_2 y\|_{L^2}.
\end{equation} 

Now we consider both the actions of $\tilde{\Gamma}$ and $G$ on $AC(I, M)$. For the quotient space $\mathcal{S}(I, M)/G$, the induced distance function is given by
\begin{align}
d([\beta_1], [\beta_2])
=\inf_{\substack{y\in K\\ \gamma\in\Gamma, g\in G}}\sqrt{d_G^2(\alpha_1(0), g\alpha_2(0)y)+\|q_1-y^{-1}(q_2\star\gamma) y\|_{L^2}^2}.
\end{align}
Similarly, the distance function on $S(I, M)/G$ can be simplified to
\begin{align}
d([\beta_1], [\beta_2])
=\inf_{y\in K, \gamma\in\Gamma}\|q_1-y^{-1}(q_2\star\gamma) y\|_{L^2}.
\end{align}
\begin{remark}
A given \emph{Riemannian} manifold might be representable in multiple ways as a homogeneous space. Our construction of a Riemannian metric on the path space of curves with values in $M$ depends on the choice of representation. This is analogous to the framework of Celledoni et. al, as observed in \cite{CeEiEsSc2017a}.  
\end{remark}

\section{Implementation and Examples}
In this section we will describe the implementation of the proposed matching framework for  specific examples of homogeneous spaces, namely the $n$-dimensional sphere and the space of $n\times n$ positive definite symmetric matrices with determinant 1 ($\operatorname{PDSM}$). Note that the second example includes as an important special case the hyperbolic plane.  We will demonstrate the efficiency of the proposed numerical framework by computing minimizing geodesics in all of these cases and demonstrate the effects of the geometry of the manifold $M$ on the resulting optimal deformations. 

\subsection{The geodesic distance for parametrized curves}
We recall from Section 3 the formula for the geodesic distance on the space of parametrized curves with values in a homogeneous space $M$:
\begin{equation}\label{eq.disACM1}
	d(\beta_1, \beta_2) = \inf_{y\in K}\sqrt{d_G^2(\alpha_1(0), \alpha_2(0)y)+\|q_1-y^{-1}q_2y\|_{L^2}^2}.
\end{equation}
Here $\beta_1, \beta_2\in AC(I, M)$ are given curves, $\alpha_1$ and $\alpha_2$ are their horizontal lifts, and $(\alpha_1(0), q_1) = Q(\alpha_1),\, (\alpha_2(0), q_2) = Q(\alpha_2)$ are the corresponding square-root velocity functions. Before beginning a specific implementation, we develop some tools that will be helpful for computing the optimization over $K$ required by equation \ref{eq.disACM1} for {\it any} implementation of this method.

To compute the geodesic between $\beta_1$ and $\beta_2$, we need to compute the geodesic of minimal length between the orbits of $(\alpha_1(0), q_1)$ and $(\alpha_2(0), q_2)$ under the action of $K$. Since geodesics on $AC(I, G)$ can be calculated explicitly, we only need to find the element $y_0\in K$ that minimizes 
\begin{align}
d_G^2(\alpha_1(0), \alpha_2(0)y)+\|q_1-y^{-1}q_2y\|_{L^2}^2.
\end{align} 
Then the geodesic between
$(\alpha_1(0),q_1)$ and $(\alpha_2(0)y_0,y_0^{-1}q_2y_0)$ will project to a geodesic between $\beta_1$ and $\beta_2$ and thus we will obtain the induced geodesic distance, see \cite{Oneill1966,Mic2016} for more details regarding Riemannian submersions. 

To find the optimal $y\in K$ we employ a Riemannian gradient descent method. Therefore, we define $F: K\to \mathbb{R}$ by
\begin{equation}\label{eq:F}
F(y)=d_G^2(\alpha_1(0), \alpha_2(0)y)+\|q_1-y^{-1}q_2y\|_{L^2}^2.
\end{equation}
Since $K$ acts transitively on $(\alpha_2(0), q_2)\ast K$ we only need to calculate the gradient of $F$ at the identity.
We have:
\begin{lemma}
The gradient of the function $F: K\rightarrow \mathbb R$ at the identity is given by
\begin{align}
\nabla F=-2\operatorname{Proj}_{\mathfrak{k}}\left(\operatorname{Log}_I(\alpha_2(0)^{-1}\alpha_1(0))+\int_I \operatorname{ad}^T_{q_2}q_1dt\right),
\end{align}
where $\operatorname{Log}$ denotes the inverse Riemannian exponential at $\operatorname{Id}\in G$.
\end{lemma}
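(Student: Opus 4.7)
The plan is to split $F=F_1+F_2$ with $F_1(y)=d_G^2(\alpha_1(0),\alpha_2(0)y)$ and $F_2(y)=\|q_1-y^{-1}q_2y\|_{L^2}^2$, compute the differential of each at $e\in K$ along a direction $X\in\mathfrak{k}$, read off the Riesz representative with respect to $\langle\cdot,\cdot\rangle^G$ restricted to $\mathfrak{k}$, and use the fact that $T_eK=\mathfrak{k}$ (with inner product inherited from $\mathfrak{g}$) to obtain the gradient on $K$ as the $\mathfrak{k}$-component of the gradient on $G$.

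For $F_1$, I would note first the general Riemannian identity that the gradient of $y\mapsto d_G^2(p,y)$ at $y$ equals $-2\operatorname{Log}_y(p)$. Applied at $y=\alpha_2(0)$ with the chain rule for the left-translation inside $F_1$, and using that left translation is an isometry (so $\operatorname{Log}_{\alpha_2(0)}(\alpha_1(0))=L_{\alpha_2(0)*}\operatorname{Log}_I(\alpha_2(0)^{-1}\alpha_1(0))$ by left invariance of the metric), I get
\begin{equation*}
dF_1|_e(X)=-2\bigl\langle \operatorname{Log}_I(\alpha_2(0)^{-1}\alpha_1(0)),\,X\bigr\rangle^G.
\end{equation*}

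For $F_2$, I would differentiate the conjugation: with $y(t)=\exp(tX)$, $\frac{d}{dt}|_{t=0}\,y(t)^{-1}q_2\,y(t)=[q_2,X]=\operatorname{ad}_{q_2}X$, so pointwise differentiating the square norm at $t=0$ gives
\begin{equation*}
dF_2|_e(X)=-2\int_I\bigl\langle q_1-q_2,\operatorname{ad}_{q_2}X\bigr\rangle^G\,dt.
\end{equation*}
Using the definition of the transpose, $\langle v,\operatorname{ad}_{q_2}X\rangle^G=\langle\operatorname{ad}^T_{q_2}v,X\rangle^G$, and observing that $\operatorname{ad}^T_{q_2}q_2=0$ since $[q_2,q_2]=0$, this reduces to
\begin{equation*}
dF_2|_e(X)=-2\left\langle\int_I\operatorname{ad}^T_{q_2}q_1\,dt,\,X\right\rangle^G.
\end{equation*}

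Adding the two contributions and restricting $X$ to $\mathfrak{k}$, the Riesz representative with respect to the restricted metric on $\mathfrak{k}$ is obtained by orthogonal projection onto $\mathfrak{k}$, yielding the claimed formula. The only subtle point is the identification $\operatorname{Log}_{\alpha_2(0)}(\alpha_1(0))=L_{\alpha_2(0)*}\operatorname{Log}_I(\alpha_2(0)^{-1}\alpha_1(0))$, which rests on the fact that left translations are isometries of the chosen left-invariant metric and hence intertwine the Riemannian exponential at different base points; everything else is a routine differentiation combined with the definition of $\operatorname{ad}^T$.
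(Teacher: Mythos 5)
Your decomposition $F=F_1+F_2$, the treatment of $F_1$ via left invariance and the standard gradient formula for the squared distance, and the differentiation of the conjugation term are all exactly the route the paper takes, and the final formula comes out correctly. There is, however, one incorrectly justified step: you claim $\operatorname{ad}^T_{q_2}q_2=0$ ``since $[q_2,q_2]=0$.'' That conflates $\operatorname{ad}^T_{q_2}$ with $\operatorname{ad}_{q_2}$. By definition $\langle\operatorname{ad}^T_{q_2}q_2,Z\rangle^G=\langle q_2,[q_2,Z]\rangle^G$, and for a metric that is merely left-invariant this need not vanish for all $Z\in\mathfrak{g}$ (e.g.\ in the Lie algebra of the affine group of the line with $[e_1,e_2]=e_2$ and the standard inner product, taking $q_2=e_1+e_2$ gives $\langle q_2,[q_2,e_2]\rangle=1$). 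What you actually need, and what is true, is the weaker statement $\operatorname{Proj}_{\mathfrak{k}}\bigl(\operatorname{ad}^T_{q_2}q_2\bigr)=0$: since the metric is assumed right-invariant under $K$, the operator $\operatorname{ad}_X$ is skew-adjoint for $X\in\mathfrak{k}$, whence $\langle q_2,[q_2,X]\rangle^G=-\langle q_2,\operatorname{ad}_Xq_2\rangle^G=0$ for all $X\in\mathfrak{k}$. Since you only ever pair against directions $X\in\mathfrak{k}$ before projecting, this repairs the argument with no other changes.

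The paper sidesteps this point entirely by expanding $F_2(y)=\|q_1\|^2+\|y^{-1}q_2y\|^2-2\langle q_1,y^{-1}q_2y\rangle_{L^2}$ and observing that $\|y^{-1}q_2y\|_{L^2}=\|q_2\|_{L^2}$ for $y\in K$ (again by the bi-invariance with respect to $K$), so the self-term is constant and only the cross term needs differentiating. That is the cleaner bookkeeping, but it rests on the same invariance hypothesis you need to invoke; once you cite it, your version is equivalent.
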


\begin{proof}
To calculate the gradient of $F$ we will consider the two terms of $F$ separately. We extend the first term of $F$ to a function $F_1: G\to\mathbb{R}$. Since the metric on $G$ is left invariant we have
\begin{equation}
	F_1(y) = d_G^2(\alpha_1(0), \alpha_2(0)y) = d_G^2(\alpha_2(0)^{-1}\alpha_1(0), y).
\end{equation}
By \cite[Theorem~7.1]{SrKl2016}, the gradient of $F_1$ at $y=\operatorname{Id}$ is given by 
\begin{equation}
	\nabla F_1 = -2\operatorname{Log}(\alpha_2(0)^{-1}\alpha_1(0)).
\end{equation}
If $\operatorname{Log}$ is multi-valued, we will take the value with the smallest norm. Restricting $F_1$ to $K$, we obtain the gradient of the first term of $F(y)$ at the identity to be
\begin{equation}
	-2\operatorname{Proj}_{\mathfrak{k}}\left(\operatorname{Log}(\alpha_2(0)^{-1}\alpha_1(0))\right).
\end{equation}
Denote by $F_2$ the second term of $F$. Using again the left invariance of the metric on $G$ we have
\begin{align}
F_2(y)&=\|q_1-y^{-1}q_2y\|_{L^2}^2=\|q_1\|_{L^2}^2+\|y^{-1}q_2y\|_{L^2}^2-2\langle q_1, y^{-1}q_2y\rangle_{L^2}\notag\\
&=\|q_1\|_{L^2}^2+\|q_2\|_{L^2}^2-2\langle q_1, y^{-1}q_2y\rangle_{L^2}.
\end{align}
For $v\in\mathfrak{k}$ the directional derivative of $F_2$ at $\operatorname{Id}$ in the direction $v$ is given by
\begin{align}
	\frac{d}{ds}|_{s=0}F_2(\exp(sv)) &= -2\frac{d}{ds}|_{s=0}\langle q_1, \exp(sv)^{-1}q_2\exp(sv)\rangle_{L^2}\notag\\
	& = -2\frac{d}{ds}|_{s=0}\int_I\langle q_1, \exp(sv)^{-1}q_2\exp(sv)\rangle dt\notag\\
	&=-2\int_I\langle q_1, \operatorname{ad}_{(-v)}q_2\rangle dt=-2\int_I\langle q_1, [-v , q_2]\rangle dt\notag\\
	&=-2\int_I\langle q_1, [q_2 , v]\rangle dt=-2\int_I\langle q_1, \operatorname{ad}_{q_2}v\rangle dt\notag\\
	&=-2\int_I\langle \operatorname{ad}^T_{q_2}q_1, v\rangle dt\notag\\
	&=\langle-2\int_I \operatorname{ad}^T_{q_2}q_1dt, v\rangle,
\end{align}
where $\operatorname{ad}^T_{X}:\mathfrak{g}\to\mathfrak{g}$ is the adjoint of $\operatorname{ad}_{X}$ with respect to the bilinear form $\langle\,,\,\rangle$ on $\mathfrak{g}$, i.e., $\langle\operatorname{ad}^T_XY, Z\rangle = \langle Y, \operatorname{ad}_XZ\rangle$. Therefore the gradient of the second term at the identity is given by
\begin{equation}
-2\operatorname{Proj}_{\mathfrak{k}}\left(\int_I \operatorname{ad}^T_{q_2}q_1dt\right).
\end{equation}
Hence we obtain the desired formula for the gradient.
\end{proof}
\begin{remark}
For $K$ a matrix Lie group with inner product given by $\langle u, v\rangle =\operatorname{tr}(uv^t)$, the gradient of $F$ at the identity simplifies to
\begin{align}
\nabla F=-2\operatorname{Proj}_{\mathfrak{k}}\left(\operatorname{Log}_I(\alpha_2(0)^{-1}\alpha_1(0))+\int_0^1(q_2^tq_1-q_1q_2^t)dt\right).
\end{align}
\end{remark}
Using the explicit formula of the gradient of $F$, it is straight-forward to implement a gradient descent based method to find a (local) minimizer of $F$.

\subsection{The geodesic distance for un-parametrized curves}
For unparametrized curves, the distance function on $\mathcal{S}(I, M)$ is given by
\begin{align*}
d([\beta_1], [\beta_2]) &= \inf_{\substack{y\in K\\ \gamma_1,\gamma_2\in\tilde{\Gamma}}}\sqrt{d_G^2(\alpha_1(0),\alpha_2(0)y)+ \|q_1\star\gamma_1-y^{-1}(q_2\star\gamma_2)y\|_{L^2}^2}
\end{align*}
where $(\alpha_1(0), q_1) = Q(\alpha_1),\, (\alpha_2(0), q_2) = Q(\alpha_2)$ and $\alpha_1, \alpha_2\in AC(I,\mathfrak{k}^{\perp})$ are two horizontal lifts of $\beta_1$ and $\beta_2$, respectively. Note that Theorem~\ref{OpExstHomogeneousMain} guarantees the existence of both optimal reparametrizations  $\gamma_1,\gamma_2\in \tilde \Gamma$ and $y_0\in K$. However, the solutions become highly non-unique, since for any diffeomorphisms $\gamma\in \Gamma$ and any pair of optimal reparametrizations 
$\gamma_1^*,\gamma_2^*\in \tilde \Gamma$ the pair $\gamma_1^*\circ\gamma,\gamma_2^*\circ\gamma\in \tilde \Gamma$ is also optimal. 

For the purpose of this article, we decided to solve the simpler problem
\begin{align}\label{unparamprob}
\inf_{y\in K, \gamma\in\Gamma}\sqrt{d_G^2(\alpha_1(0),\alpha_2(0)y)+ \|q_1-y^{-1}(q_2\star\gamma)y\|_{L^2}^2}
\end{align}
Using that $\Gamma$ is a dense subset of $\tilde \Gamma$, we can approximate the geodesic distance arbitrarily well using this approach. For each fixed $y_0\in K$ 
the problem reduces to the optimal reparametrization problem for the SRVF for curves with values in Euclidean spaces. This problem is well-studied and there exists a variety of different approaches to solve the optimization problem, c.f., \cite{SrKl2016,HuGaSrAb2016,BaEsGr2017}.  We choose to use dynamic programming, see \cite{Bell2010}, to approximate the reparametrization $\gamma$. Since the action of $K$ and the action of $\Gamma$ on $G\times L^2(I,\mathfrak{k}^{\perp})$ commute, we can iteratively use the gradient method and the dynamic programming algorithm to obtain a satisfactory approximation of the geodesic distance.
\begin{figure}[htbp]
	\begin{center}
		\includegraphics[width=0.23\linewidth]{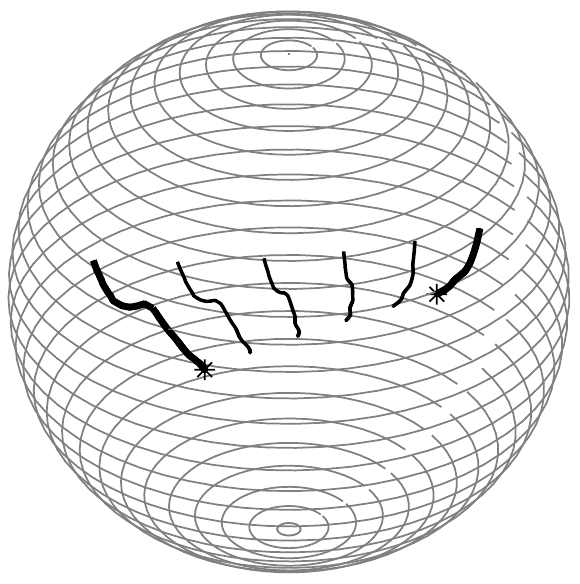}
		\includegraphics[width=0.23\linewidth]{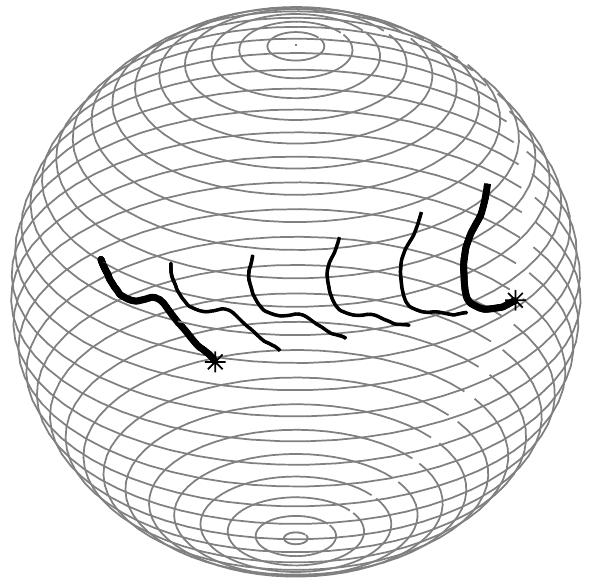}
		\includegraphics[width=0.23\linewidth]{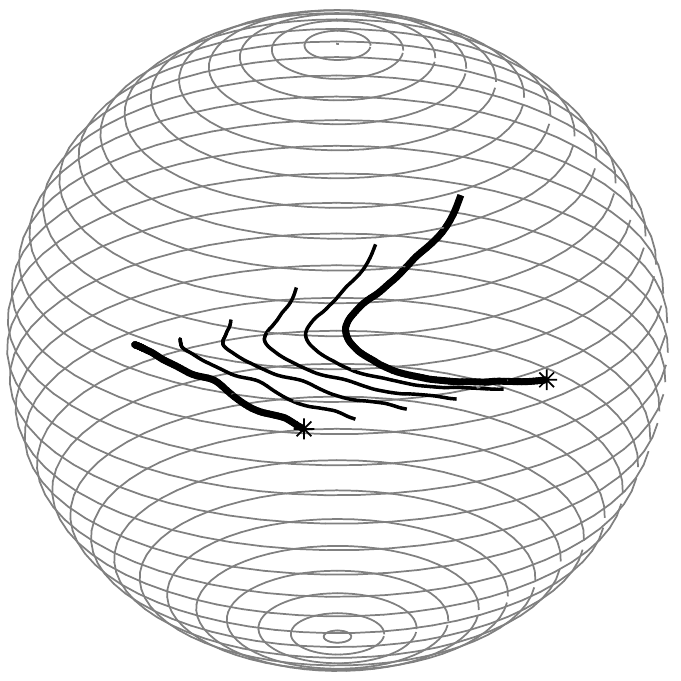}
		\includegraphics[width=0.23\linewidth]{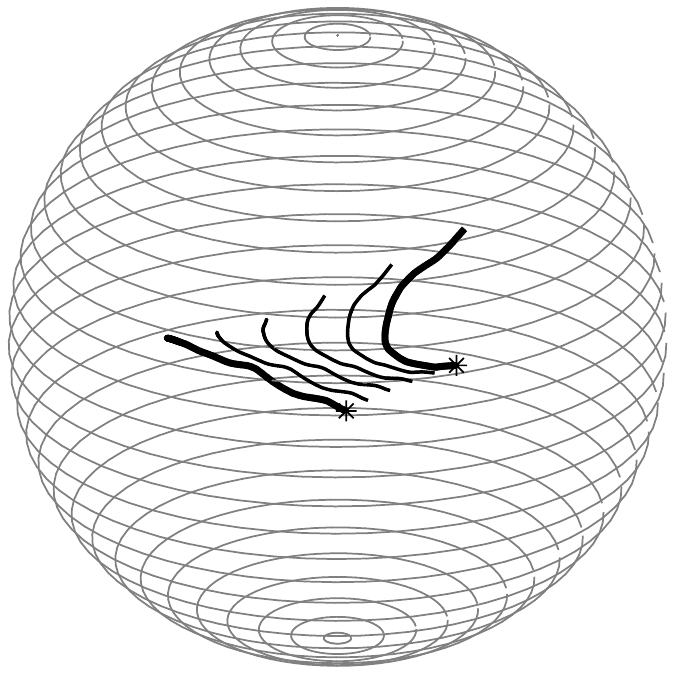}
	\end{center}
	\caption{Examples of minimizing geodesics between two curves in $\mathcal{S}(I, S^2)$. Starting points of the curves are marked with an~$\star$.}
	\label{fig.geodesic.S2}
\end{figure}

\subsection{Specific examples}
In this section we will present selected examples in the cases of  $M$ being the two-dimensional sphere, the hyperbolic plane, or the set of three by three positive definite symmetric matrices ($\operatorname{PDSM}$).

In Fig.~\ref{fig.geodesic.S2} we show four examples of minimizing geodesics on the sphere. The initial and target curves represent the shapes of hurricanes, taken from the National Hurricane Center website: \url{http://www.nhc.noaa.gov/data/}, where each hurricane track is discretized using 100 points. Statistical analysis of this data set using a previous adaptation of elastic shape analysis (TSRVF) can be found in the article \cite{SuKuKlSr2014}.  Analysis using the current method can be found in our earlier conference proceedings \cite{SuKlBa2017a}. In \ref{Appendix:A} of the current paper, we derive several of the specific formulas required for implementations involving a sphere $S^n$ of arbitrary dimension.

\begin{figure}[htbp]
	\begin{center}
		\includegraphics[width=0.233\linewidth]{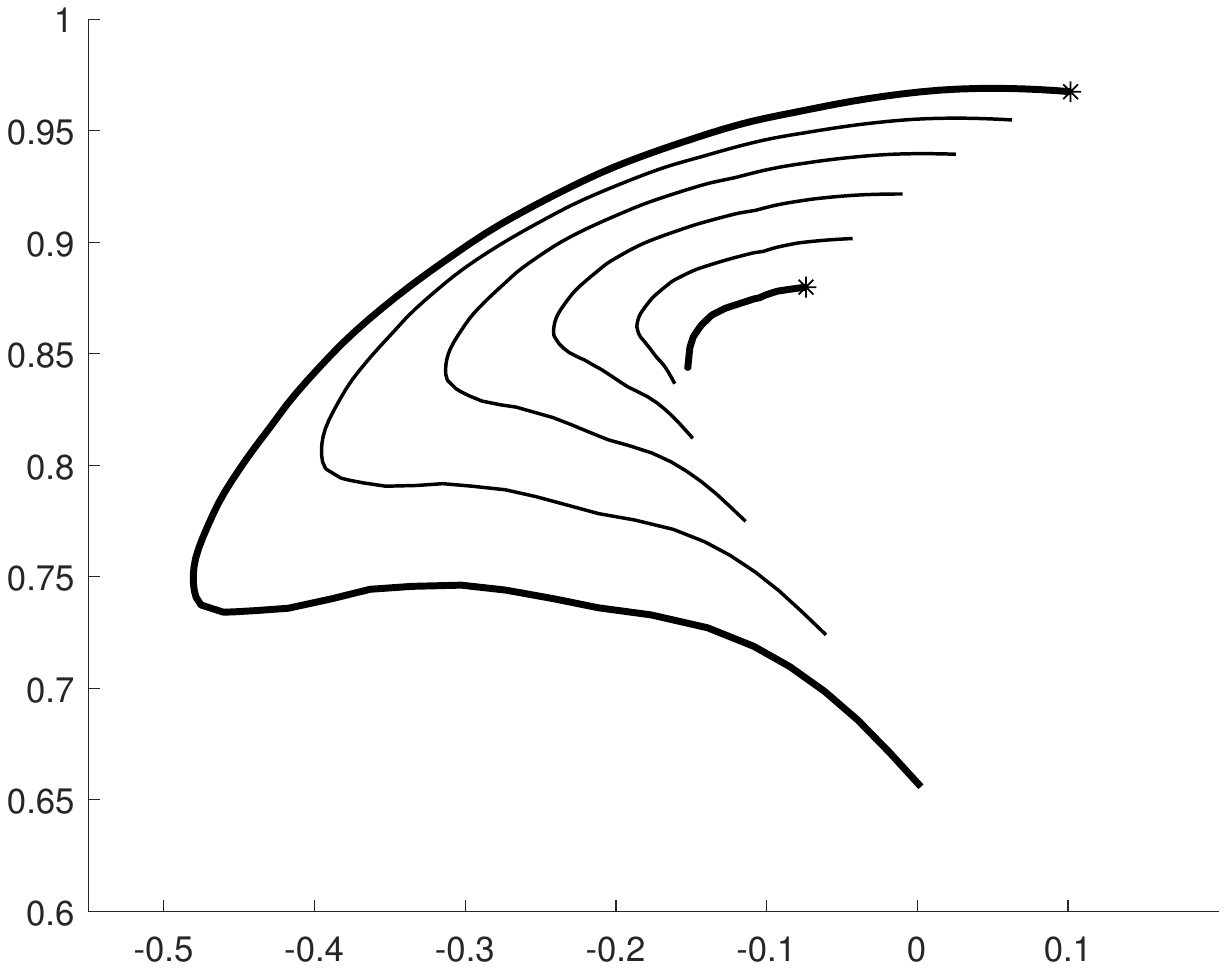}
		\includegraphics[width=0.233\linewidth]{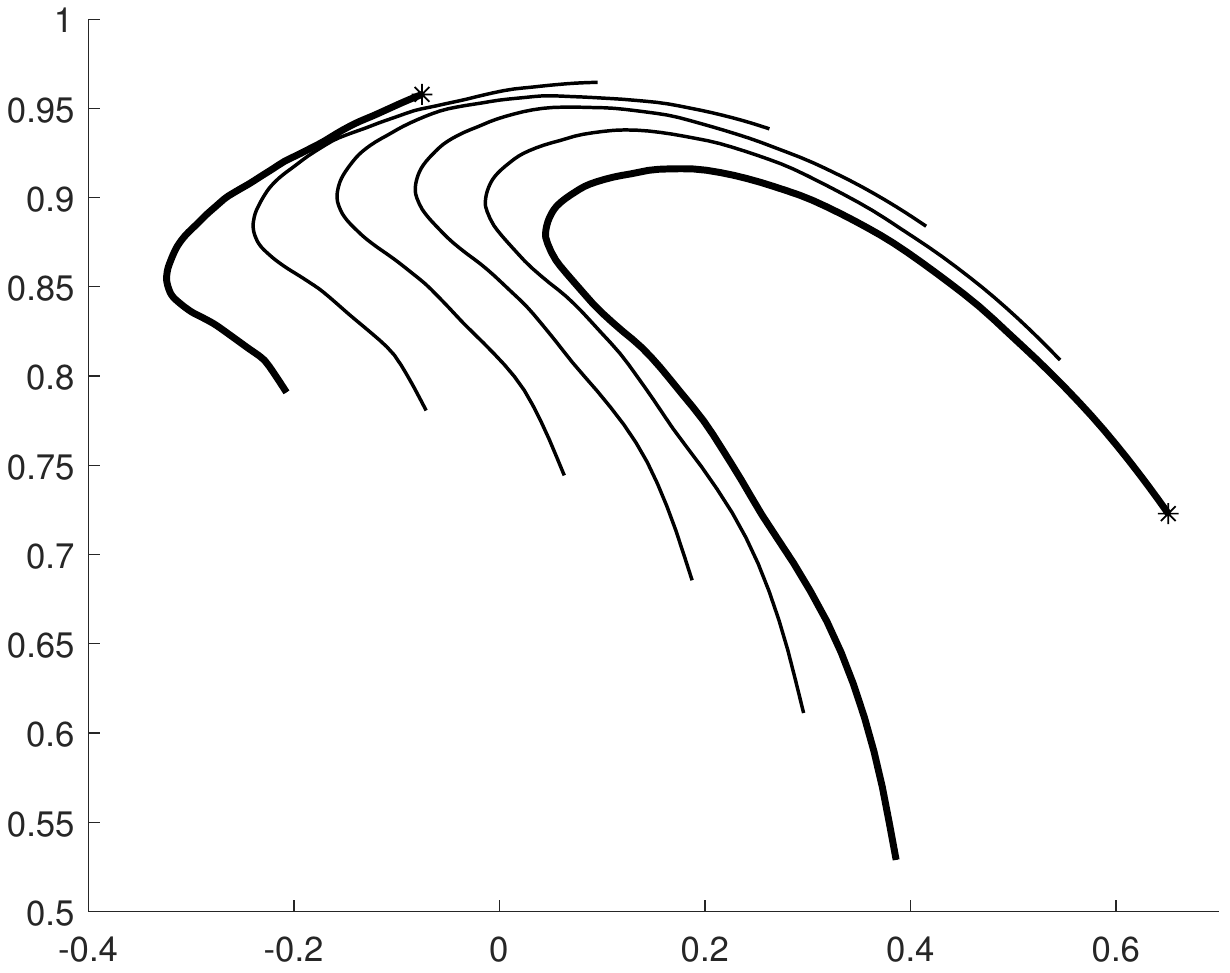}
		\includegraphics[width=0.233\linewidth]{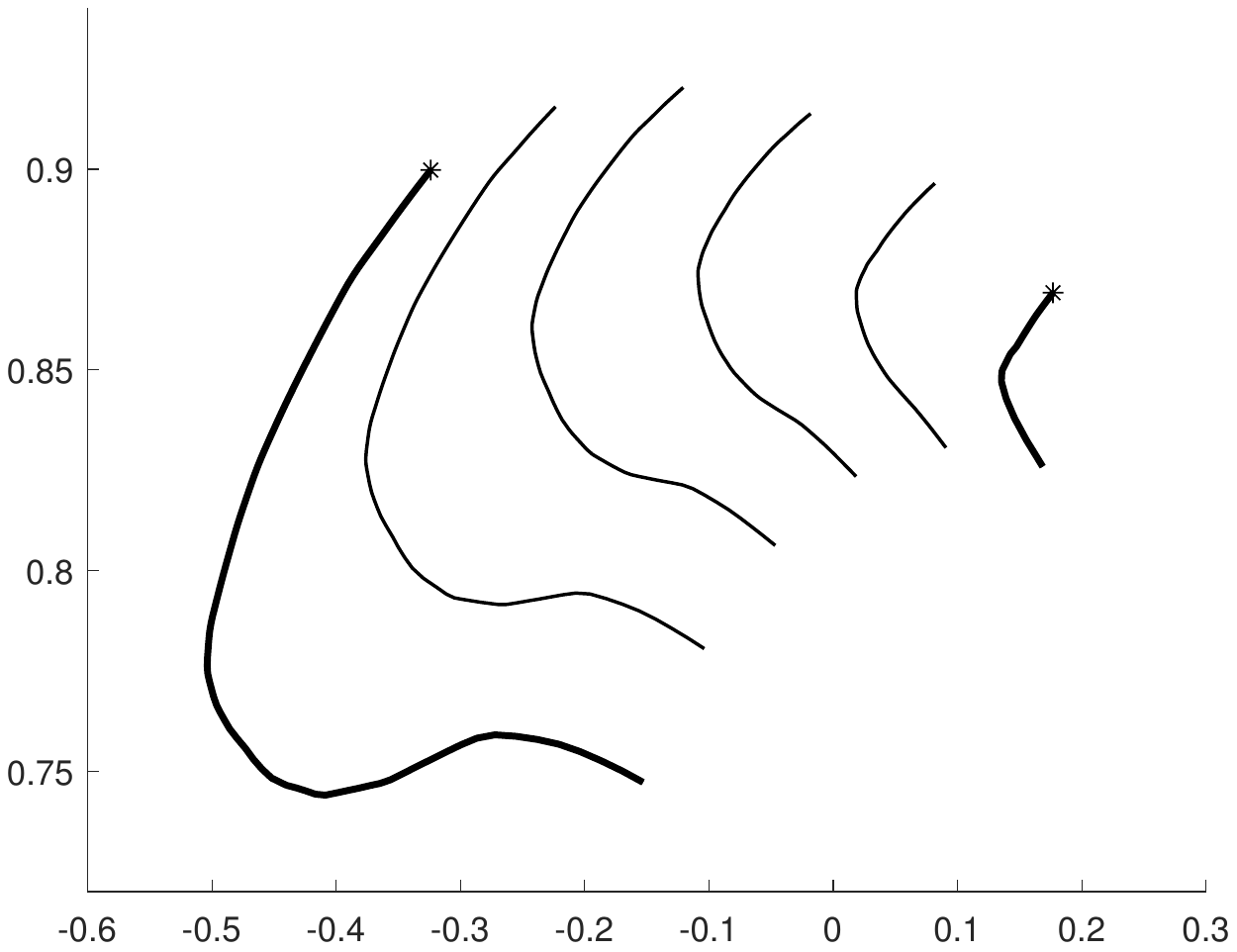}
		\includegraphics[width=0.233\linewidth]{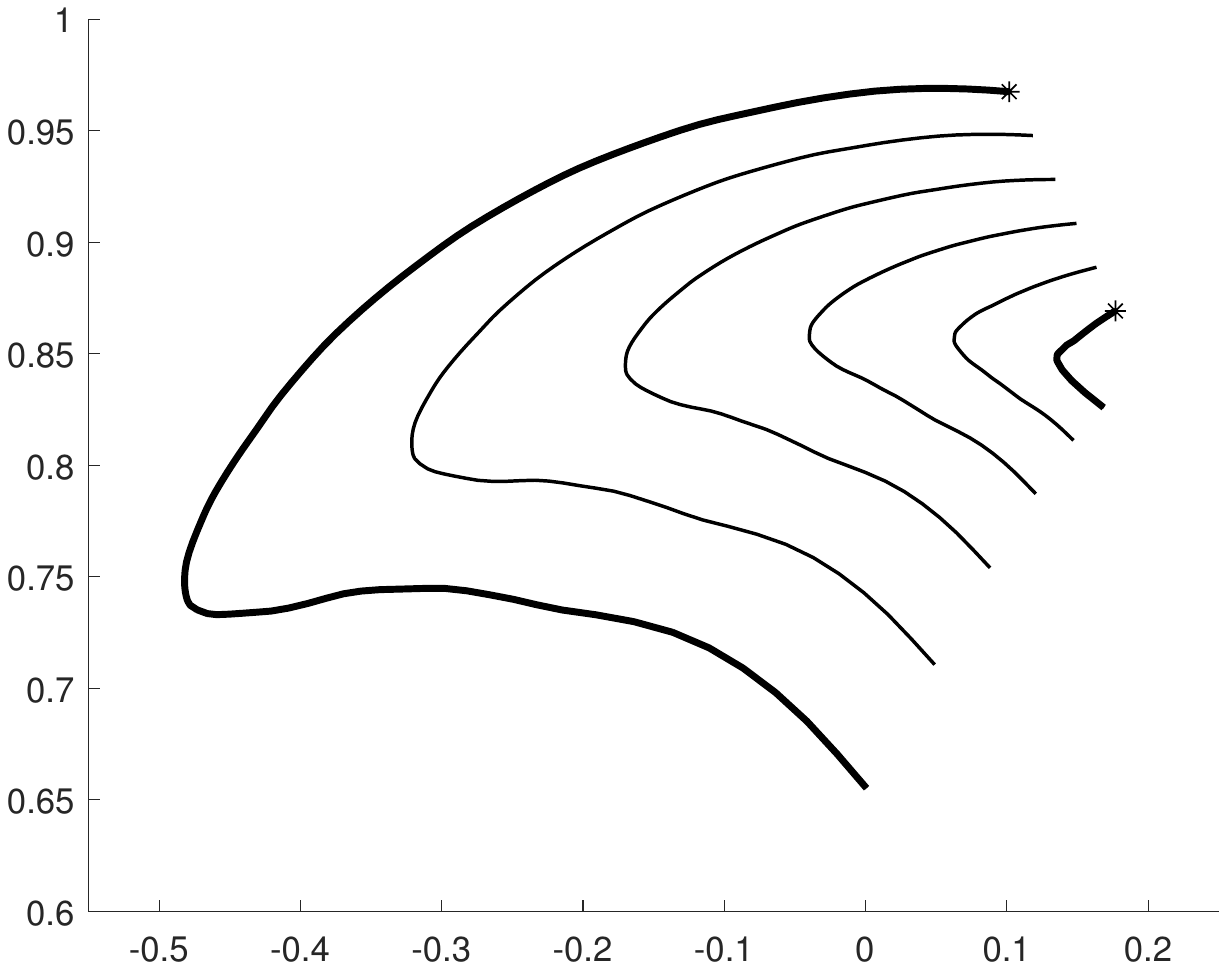}
	\end{center}
	\caption{Examples of minimizing geodesics between two curves in $\mathcal{S}(I, \mathbb H^2)$. Starting points of the curves are marked with an~$\star$.}
	\label{fig.geodesic.H2}
\end{figure}

As a second example we consider curves in the hyperbolic plane, see Fig.
\ref{fig.geodesic.H2}. Again, we discretize the curves using 100 points.  To visualize the curves, we used the upper-half plane model for hyperbolic space. This allows us to demonstrate the effect of the geometry of $M$ on the resulting optimal deformations and registrations. In Fig.~\ref{fig.H2&R2} we show the minimizing geodesic between curves, where we interpret these curves either as curves in the hyperbolic plane or as curves in Euclidean space. One can see that the choice of Riemannian metric has a large effect on the resulting geodesics. This serves as a strong motivation for the developed framework, as it suggests that one should not ignore the geometry of the ambient space for applications in shape analysis. In the example on the very left one can also see that a whole part of the first curve wants to be deformed to a single point on the second curve, which demonstrates the result that the optimal deformation is only an element of the closure $\tilde \Gamma$. The formulas for the hyperbolic plane and in particular the calculation of the inverse of the Riemannian exponential map is described in \ref{Appendix:B}.

\begin{figure}[htbp]
	\begin{center}
		\includegraphics[width=0.233\linewidth]{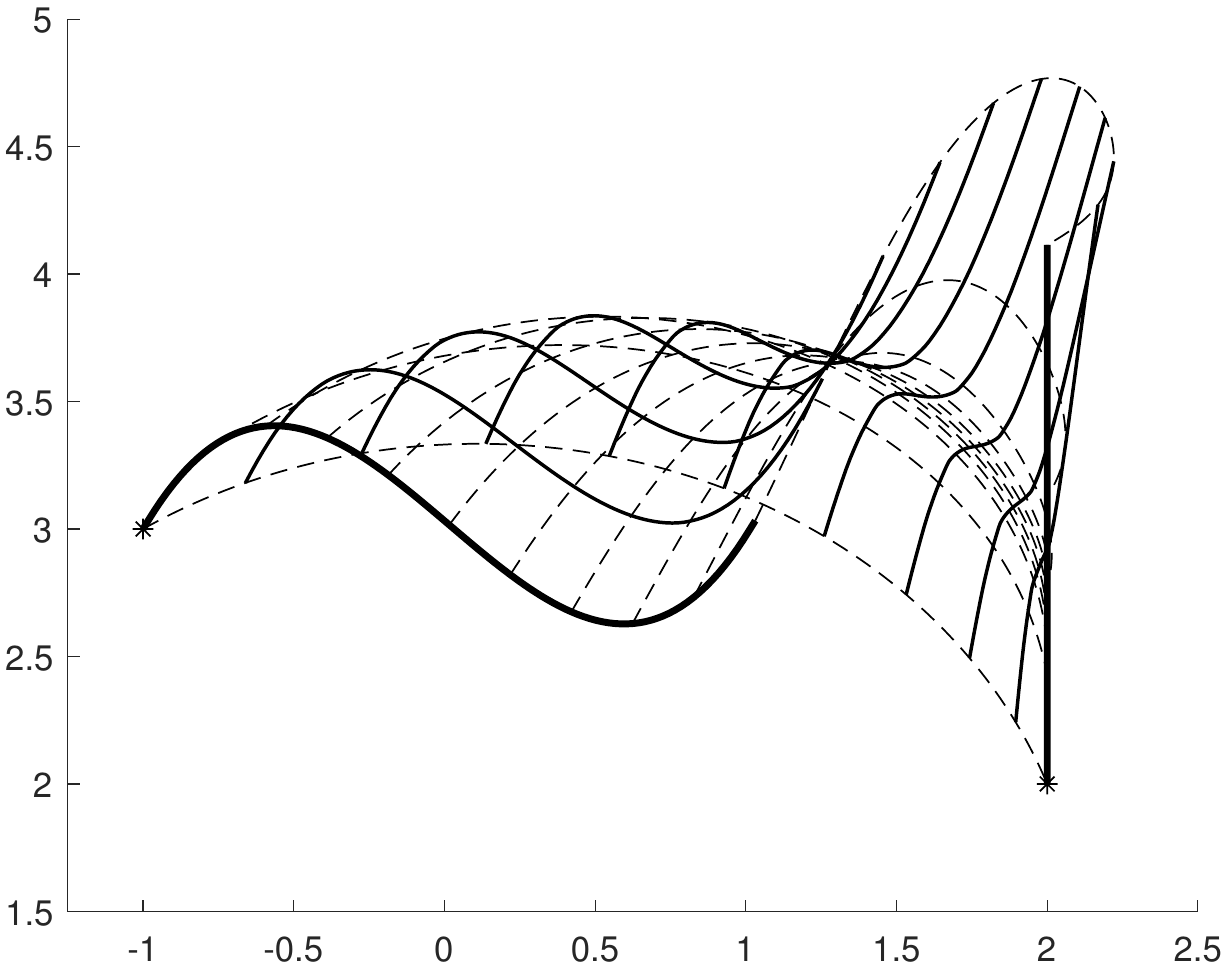}
		\includegraphics[width=0.233\linewidth]{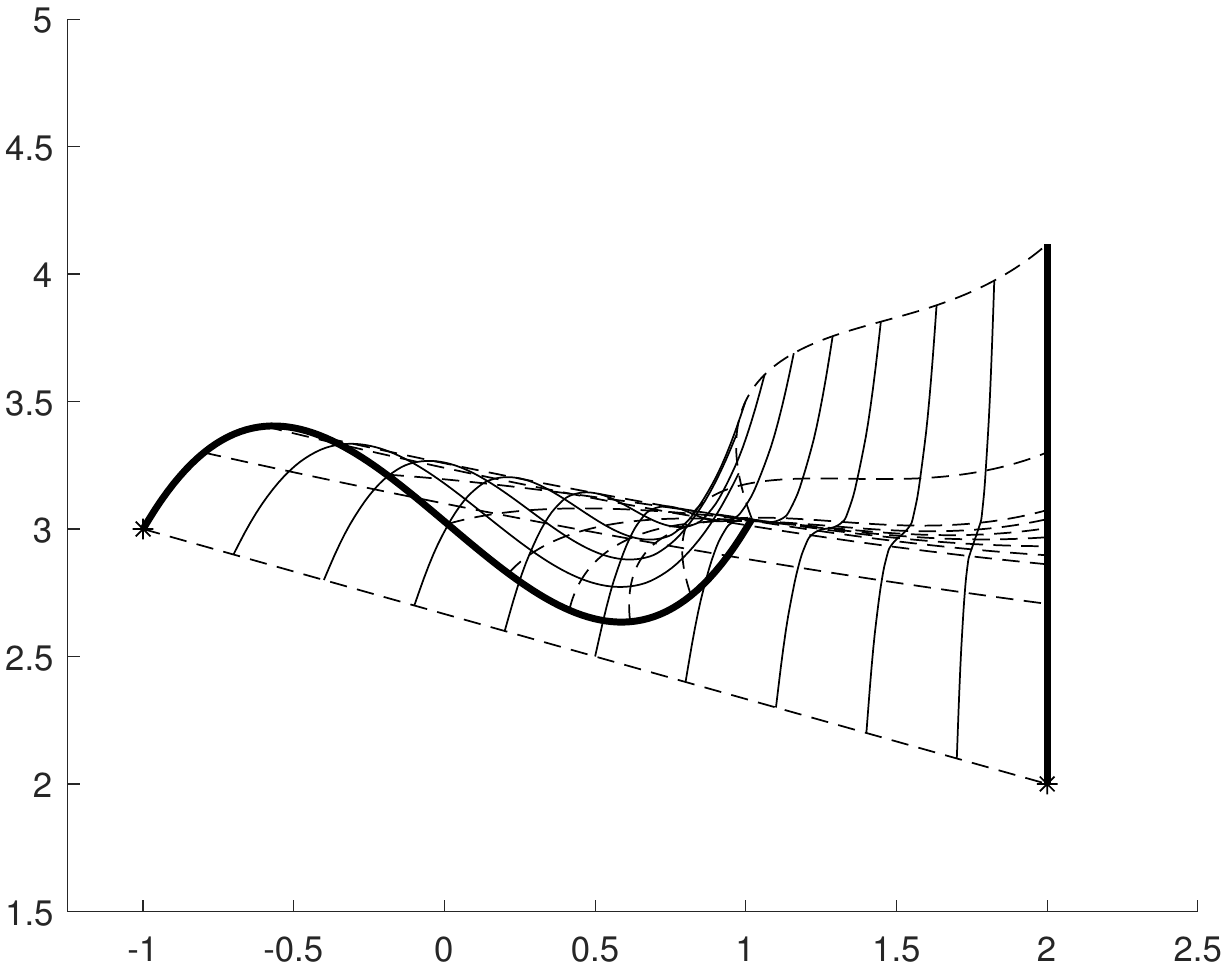}
		\includegraphics[width=0.233\linewidth]{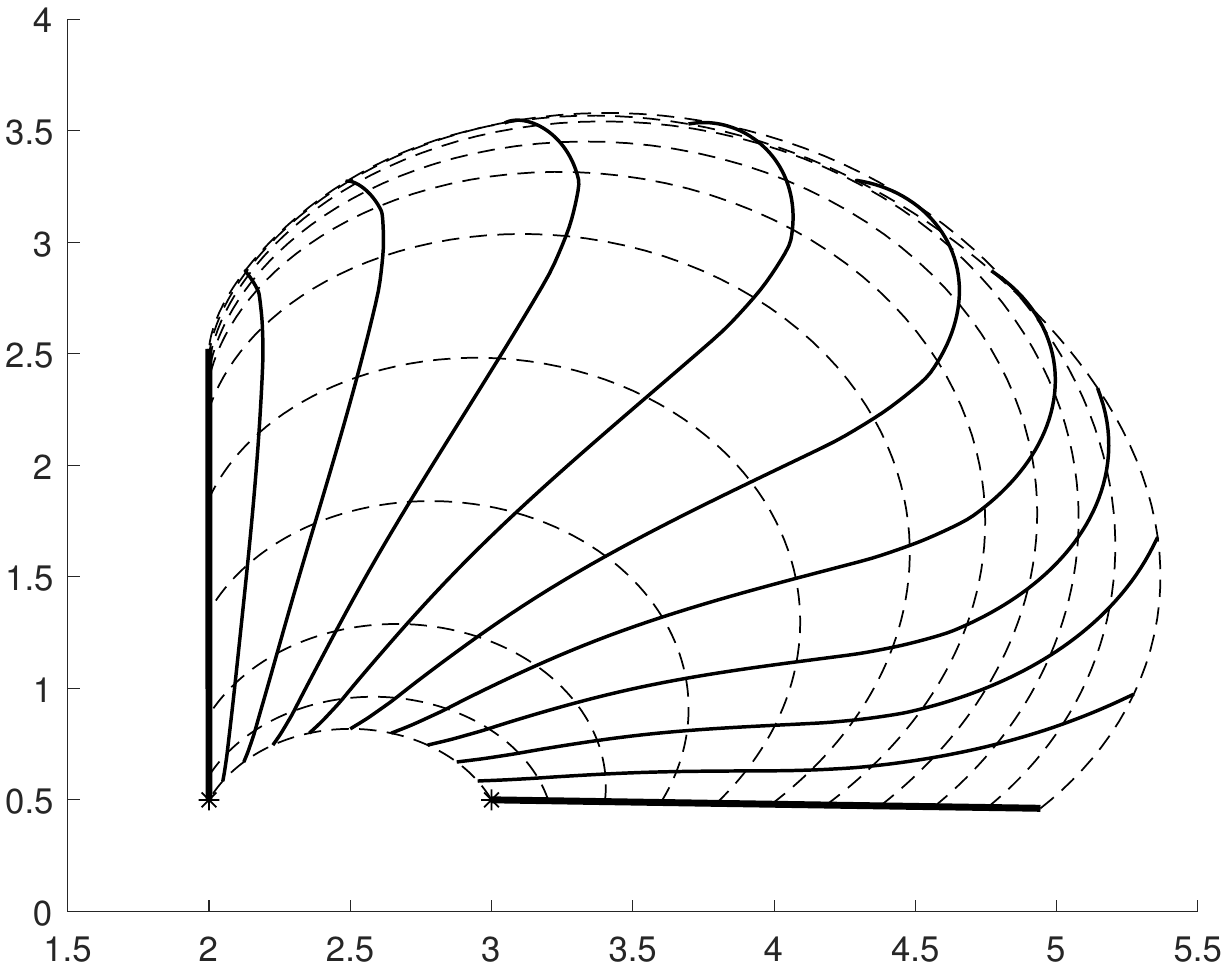}
		\includegraphics[width=0.233\linewidth]{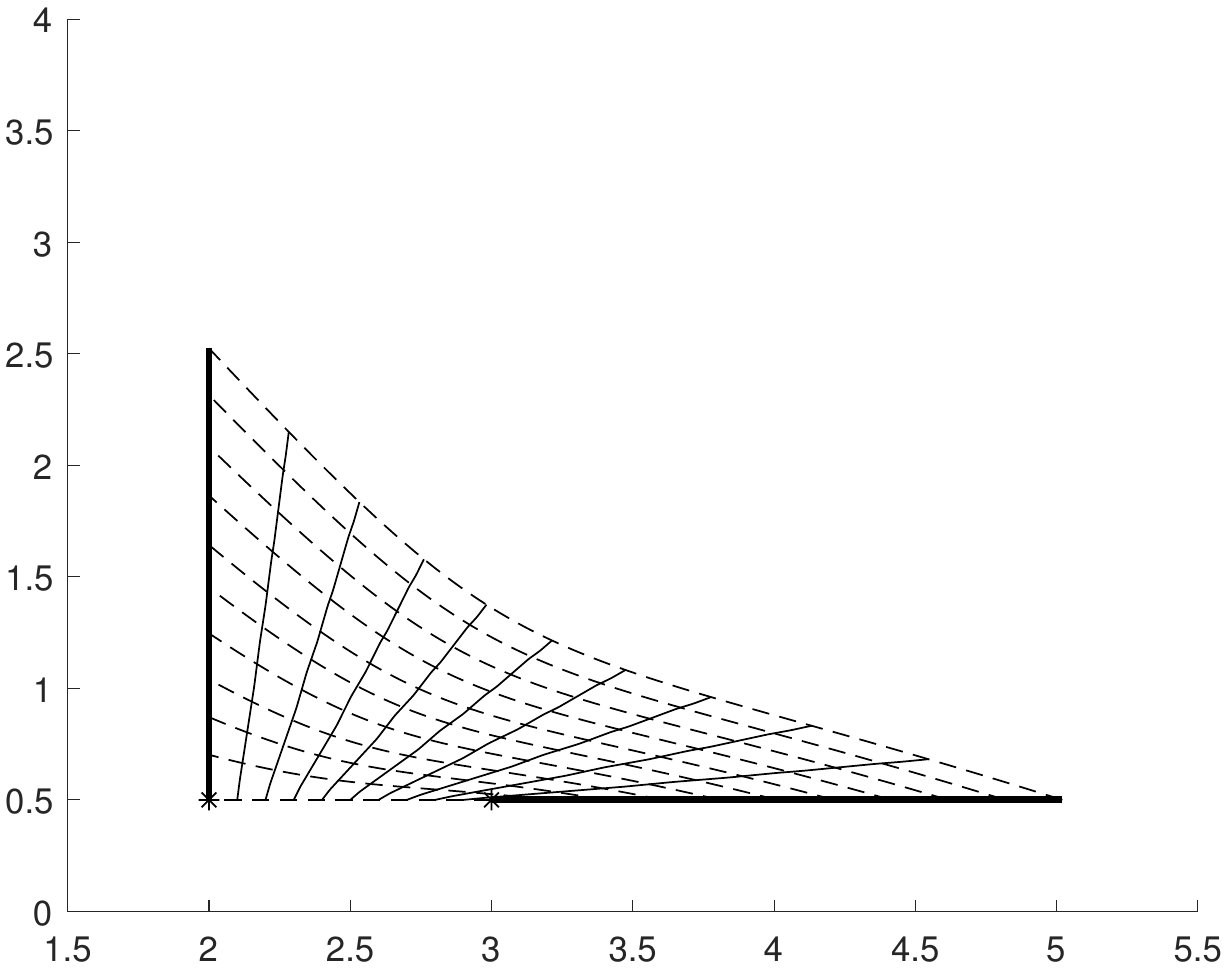}
	\end{center}
	\caption{Comparison between matching of curves in $\mathcal{S}(I, \mathbb H^2)$ and in $\mathcal{S}(I, \mathbb{R}^2)$. The solid lines show the inter-mediate curves, whereas the broken lines depict specific particle paths and can be used to visualized the optimal  registrations.}
	\label{fig.H2&R2}
\end{figure}

Finally in Fig.~\ref{fig.geodesic.PDSM} we show an example of minimizing geodesics between two curves in the space of $3\times 3$ positive definite symmetric matrices with determinant $1$ ($\operatorname{PDSM}_{3\times 3}$). Note that each $3\times 3$ positive definite symmetric matrix can be visualized as an ellipsoid with principal directions parallel to its eigenvectors and axes proportional to its eigenvalues. This enables us to use a sequence of ellipsoids to represent a discrete curve with values in $\operatorname{PDSM}_{3\times 3}$. The details of how our framework was applied on curves with values in $\operatorname{PDSM}_{3\times 3}$ can be found in \ref{Appendix:B}.

\begin{figure}
	\begin{center}
		\includegraphics[width=1\linewidth]{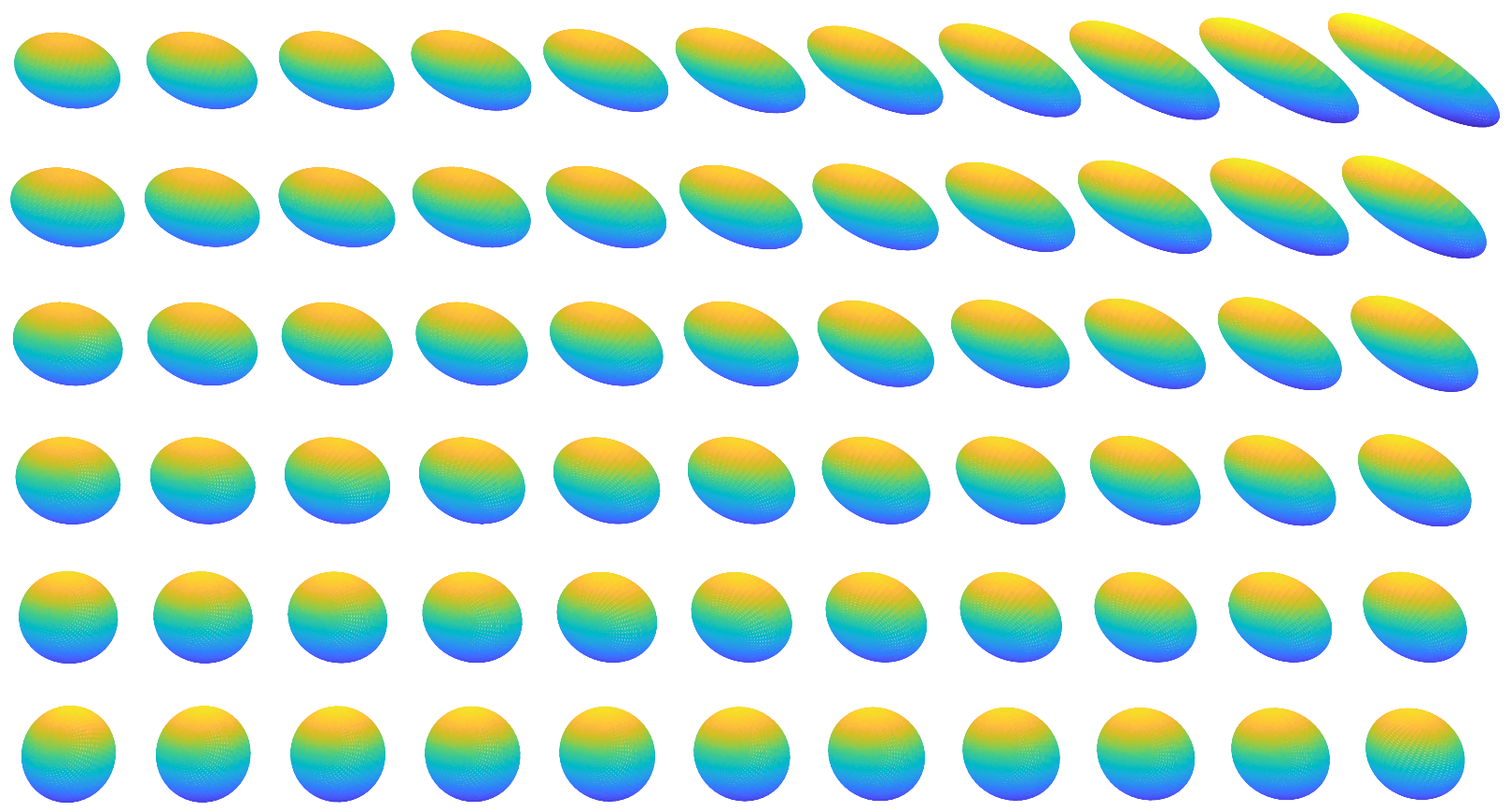}
	\end{center}
	\caption{An example of minimizing geodesic between two curves in $\mathcal{S}(I, \operatorname{PDSM}_{3\times 3})$. The right column and the left column of ellipsoids represent the boundary curves.} 
	\label{fig.geodesic.PDSM}
\end{figure}

To demonstrate the efficiency of our numerical framework we present the average time for matching a pair of curves in $\mathbb R^2$, $\mathbb H^2$, $S^2$ and in $\operatorname{PDSM}_{3\times 3}$. For the calculation of the average times we solved  for each case 1225 boundary value problems on an Intel Core i7-4510U (2.00GHz) machine. Note that passing from curves in Euclidean space to curves with values in a homogeneous space leads to a significantly slower performance. However, the obtained average time is still more satisfactorily fast; in particular, it is faster by an order of magnitude than the methods presented in \cite{ZhSuKlLeSr2015} and \cite{LeArBa2015}.

\begin{table}\label{time_results}
\begin{center}
	\begin{tabular}{ | c | c | c | c |}
		\hline
		{\multirow{2}{*}{Homogeneous Spaces} }  
		& \multicolumn{3}{c|}{\raisebox{.1cm}{\phantom{M}}\raisebox{-.1cm}{\phantom{M}}Average Time (seconds)} \\ 
		\cline{2-4}
		& \raisebox{.1cm}{\phantom{M}}\raisebox{-.1cm}{\phantom{M}}100 points & 300 points & 500 points \\ 
		\hline
		$\raisebox{.1cm}{\phantom{M}}\raisebox{-.1cm}{\phantom{M}}\mathbb R^2$ & 0.012314 & 0.096156 & 0.278207  \\ 
		\hline
		$\raisebox{.1cm}{\phantom{M}}\raisebox{-.1cm}{\phantom{M}}\mathbb H^2$ (evaluation) & 1.013337 & 1.103829 & 1.303323  \\ \hline
		$\raisebox{.1cm}{\phantom{M}}\raisebox{-.1cm}{\phantom{M}}\mathbb H^2$ (gradient method) & 0.626906 & 1.708868 & 2.715912  \\ 
		\hline
		$\raisebox{.1cm}{\phantom{M}}\raisebox{-.1cm}{\phantom{M}}S^2$ (evaluation) & 0.016318 & 0.110217 & 0.292127  \\ \hline
		$\raisebox{.1cm}{\phantom{M}}\raisebox{-.1cm}{\phantom{M}}S^2$ (gradient method) & 0.104503 & 0.376404 & 0.733898  \\ 
		\hline
		$\raisebox{.1cm}{\phantom{M}}\raisebox{-.1cm}{\phantom{M}} \operatorname{PDSM}_{3\times 3}$ & 1.002971 & 2.359541  & 3.914588 \\ \hline
	\end{tabular}
	\end{center}
	\caption{Average computation time for solving the geodesic boundary value problem on the space of unparametrized curves.}
\end{table}

\appendix

\section{The homogeneous space $S^n$}\label{Appendix:A}
To view the $n$-dimensional sphere as a homogeneous space, we represent it as the quotient space $S^n\cong\operatorname{SO}(n+1)/\operatorname{SO}(n)$, where $\operatorname{SO}(n)$ denotes the special orthogonal group
\begin{equation}
\operatorname{SO}(n)=\{A\in GL(n,\mathbb{R})\,|\, A^tA=AA^t=I, \det(A)=1\}
\end{equation} with corresponding Lie algebra
\begin{equation}
\mathfrak{so}(n)=\{X\in M(n,\mathbb{R})\,|\,X+X^t=0\}.
\end{equation}
Let ${\bf n}=(0,...0,1)^t\in S^n$ be the north pole of the sphere. We identify $\operatorname{SO}(n)$ as a subgroup of $\operatorname{SO}(n+1)$ using the inclusion
$$	A \to \left ( \begin{array}{lcr}
A & 0 \\ 0 & 1
\end{array}  \right ).$$
The quotient map $\pi: \operatorname{SO}(n+1)\to\operatorname{SO}(n+1)/\operatorname{SO}(n)\cong S^n$ is then given by $\pi(\alpha) = \alpha{\bf n}$.

We use the Riemannian metric 
\begin{equation}
	\langle u, v\rangle_g = \operatorname{tr}(uv^t).
\end{equation}
on $\operatorname{SO}(n+1)$.
It is straightforward to check that this metric is bi-invariant with respect to $\operatorname{SO}(n+1)$ and thus in particular with respect to $\operatorname{SO}(n)\subset \operatorname{SO}(n+1)$. Thus, the metric descends to a Riemannian metric on the quotient space and it turns out that this metric is equal to the standard metric on $S^n$. Furthermore, using the bi-invariance of the metric, the Riemannian exponential on $T_I\operatorname{SO}(n+1)$  is equal to the Lie group exponential \cite{Petersen1998} and is thus of the form $v\to \exp(v)$, where $\exp$ denotes the matrix exponential. The inverse Riemannian exponential map at the identity is the matrix log function $g\to \log(g)$. 

The following well-known lemma is needed in calculating the horizontal lifts of a curve with values on $S^n$:

\begin{lemma}\label{lem.Sn.effiRotation}
	If $p, q\in S^n$ and $p\neq-q$, then the most efficient rotation that takes $p$ to $q$ can be expressed as
	\begin{equation}\label{rot:formula}
	R_{p, q}=\left(I-\dfrac{2}{|p+q|^2}(p+q)(p^t+q^t)\right)(I-2pp^t).
	\end{equation}
	By most efficient, we mean the rotation closest to $I$ with respect to the bi-invariant metric on $\operatorname{SO}(n+1)$.
\end{lemma}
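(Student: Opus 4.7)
The plan has two parts: first verify algebraically that $R_{p,q}$ is a rotation carrying $p$ to $q$; second, check that it minimizes the bi-invariant distance to the identity among all such rotations. Throughout I set $\phi = \arccos(p^t q) \in [0,\pi)$, which is well-defined because $p \neq -q$.

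For the algebraic step I would recognize the factorization $R_{p,q} = R_2 R_1$, where $R_1 = I - 2pp^t$ is the Householder reflection across the hyperplane $p^\perp$ and $R_2 = I - \tfrac{2}{|p+q|^2}(p+q)(p+q)^t$ is the Householder reflection across $(p+q)^\perp$. Each factor lies in $\operatorname{O}(n+1)$ with determinant $-1$, so the composition is in $\operatorname{SO}(n+1)$. Using the identity $|p+q|^2 = 2(1+p^t q)$, a one-line calculation yields $R_1 p = -p$ and then $R_2(-p) = -p + \tfrac{2(1+p^t q)}{|p+q|^2}(p+q) = q$. The same formulas show that every vector in $\operatorname{span}(p,q)^\perp$ is fixed by both reflections, so $R_{p,q}$ acts as the identity on that $(n-1)$-dimensional subspace and as a rotation by angle $\phi$ on $\operatorname{span}(p,q)$.

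The optimality step I would handle as follows. Because $R_{p,q}$ has exactly one pair of non-trivial eigenvalues $e^{\pm i\phi}$, its logarithm $X_{p,q}$ satisfies $\operatorname{tr}(X_{p,q}X_{p,q}^t) = 2\phi^2$, giving $d(I,R_{p,q}) = \sqrt{2}\,\phi$. For an arbitrary $R \in \operatorname{SO}(n+1)$ with $Rp = q$, write $R = \exp X$ with $X \in \mathfrak{so}(n+1)$ the norm-minimizing logarithm, so that $d(I,R) = \|X\|_F$. The curve $c(t) = \exp(tX)p$ on $S^n$ connects $p$ to $q$ with speed $|c'(t)| = |X\exp(tX)p| \le \|X\|_{\mathrm{op}}$, hence $\phi \le \operatorname{length}(c) \le \|X\|_{\mathrm{op}}$. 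Since the non-zero eigenvalues of a real skew-symmetric matrix come in imaginary conjugate pairs $\pm i\theta_j$, one has $\|X\|_F^2 = 2\sum \theta_j^2 \ge 2\|X\|_{\mathrm{op}}^2$. Chaining these yields $d(I,R) = \|X\|_F \ge \sqrt{2}\,\|X\|_{\mathrm{op}} \ge \sqrt{2}\,\phi$, matching $d(I,R_{p,q})$.

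The main obstacle I anticipate is the sharp lower bound $d(I,R) \ge \sqrt{2}\,\phi$: the constraint $Rp=q$ is purely geometric while $d(I,R)$ is spectral, and the bridge between them requires both the spherical curve-length estimate and the algebraic identity $\|X\|_F^2 \ge 2\|X\|_{\mathrm{op}}^2$ on $\mathfrak{so}(n+1)$. Once these two ingredients are in hand, the fact that $R_{p,q}$ saturates every inequality in the chain gives optimality for free.
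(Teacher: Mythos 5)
Your proof is correct, and it goes substantially further than the paper's own argument. Both proofs start from the same observation — that $R_{p,q}$ is the composition of the Householder reflections across $p^\perp$ and $(p+q)^\perp$, hence a rotation by $\phi=\arccos(p^tq)$ in $\operatorname{span}(p,q)$ fixing the orthogonal complement, with $d(I,R_{p,q})=\sqrt{2}\,\phi$. But the paper only carries out the optimality step for $n=1$, where it is trivial ($\operatorname{SO}(2)$ is one-dimensional, so ``closest to $I$'' just means ``smallest angle''), and asserts without proof that the result generalizes to $n\geq 2$. Your inequality chain $d(I,R)=\|X\|_F\geq\sqrt{2}\,\|X\|_{\mathrm{op}}\geq\sqrt{2}\,\phi$ — combining the spherical length estimate for the curve $t\mapsto\exp(tX)p$ with the spectral identity $\|X\|_F^2=2\sum_j\theta_j^2$ for skew-symmetric $X$ — is exactly the missing general-dimensional argument, and it cleanly exhibits $R_{p,q}$ as saturating both inequalities (a single nontrivial eigenvalue pair, and a one-parameter subgroup projecting to the minimizing great-circle arc). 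The one point worth making explicit is the standing fact that for the bi-invariant metric the distance $d(I,R)$ equals the Frobenius norm of the norm-minimizing logarithm of $R$ (geodesics through $I$ are one-parameter subgroups, and $\exp$ is surjective on the compact connected group $\operatorname{SO}(n+1)$); you use this twice and it deserves a citation or a sentence, but it is standard and does not constitute a gap.
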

\begin{proof}
	Here we just give a proof of the simplest case, that is, $n=1$. The result can be generalized to $S^n$ for $n\geq 2$. Let $p\in S^1 \subset \mathbb{R}^2$. Using basic Euclidean geometry, the operator that reflects through the orthogonal complement of $p$ is of the form $\rho_p= I- 2pp^t$. Suppose $p, q\in S^1$, then
	the formula of the most efficient rotation is
	\begin{equation}
	R_{p,q} = \rho_{\frac{p+q}{\|p+q\|}}\circ\rho_p.
	\end{equation}
	By most efficient, we mean the rotation with the smallest angle, assumed by $\theta$. Note that the geodesic from the identity $I\in\operatorname{SO}(2)$ to $R_{p,q}$ is of the form
	$\exp\begin{pmatrix}
	0 & -\theta\\
	\theta & 0
	\end{pmatrix}$. The result follows from $d(I, R_{p,q}) = \left\|\begin{matrix}
	0 & -\theta\\
	\theta & 0
	\end{matrix}\right\| =\theta\sqrt{2}.$
\end{proof}

This formula is only valid if $p\neq -q$, since if $p = -q$ there is no unique shortest rotation taking $p$ to $q$.

Now we will use the above lemma to find a discrete horizontal lift $\alpha$ of a (discrete) curve $\beta: I \to S^n$. Suppose we are given the values of $\beta(t)$ sampled at $N+1$ equidistant points of $I$, i.e., we are given $\{\beta(t_i)\}$ for $t_i = \frac{i}{N},\, i = 0, 1,\cdots,N$ and we assume that $\beta(t)$ be a piecewise geodesic connecting $\beta(t_i)$. We want to find points $\alpha(t_i)\in\operatorname{SO}(n+1)$ such that
\begin{enumerate}[(a)]
	\item\label{Sn.propertyLift(a)} $\beta(t)=\pi(\alpha(t))$ for all $t\in I$, where $\alpha(t)$ is the generalized PL curve connecting the points $\alpha(t_i)$;
	\item\label{Sn.propertyLift(b)} $\alpha'(t)\perp \alpha(t)\mathfrak{so}(n)$ for all $t\in I$.
\end{enumerate}
We have the following algorithm to calculate $\alpha\in AC^{\perp}(I, \operatorname{SO}(n+1))$:
\begin{enumerate}[(1)]
	\item For
	$\beta(0)\neq-{\bf n}$, let $\alpha(0)=R_{{\bf n},\beta(0)}$. If $\beta(0)=-{\bf n}$, let 
	$$
	\alpha(0)=\left(  \begin{array}{lcr}
	-1 & 0 & 0\\0 & I_{n-1} & 0\\0 & 0 & -1
	\end{array} \right),
	$$ where $I_{n-1}$ is the $(n-1)\times(n-1)$ identity matrix. 
	\item Given $\alpha(t_i)$, set $\alpha(t_{i+1})=R_{\beta(t_i), \beta(t_{i+1})}\alpha(t_i)$.
\end{enumerate}
It is easy to see that $\alpha(t_i)$ satisfies the first condition (\ref{Sn.propertyLift(a)}) above, i.e.,
$\pi(\alpha(t_i))=\beta(t_i)$, for all $i\in \{1,...,N\}$.
It remains to check that the discrete form of the second condition (\ref{Sn.propertyLift(b)})  holds, that is, the geodesic between $\alpha(t_i)$ and $\alpha(t_{i+1})$ is perpendicular to the orbits with respect to these two elements. Assume that $\alpha(t_{i+1})=B\alpha(t_i)$ for $B\in \operatorname{SO}(n+1)$. By the bi-invariance of the metric, we have the distance $d(\alpha(t_i), \alpha(t_{i+1}))=d(\alpha(t_i), B\alpha(t_i))=d(I, B).$ It is easy to see that $B$ left translates the orbit $\alpha(t_i)$ to the orbit $\alpha(t_{i+1})$, which is equivalent to left translating $\beta(t_i)$ to $\beta(t_{i+1})$, that is, $B\beta(t_i)=\beta(t_{i+1})$. By Lemma \ref{lem.Sn.effiRotation}, we know that $R_{\beta(t_i),\beta(t_{i+1})}\in \operatorname{SO}(n+1)$ is the most efficient rotation such that 
$d(\alpha(t_i), R_{\beta(t_i),\beta(t_{i+1})}\alpha(t_i))=d(I, R_{\beta(t_i),\beta(t_{i+1})})$
is smallest, which means the distance between $\alpha(t_i)$ and $R_{\beta(t_i),\beta(t_{i+1})}\alpha(t_i)$ realizes the shortest possible distance between all pairs of representatives of these two orbits. 

\begin{remark}
We have now described our method to calculate the horizontal lifts of curves in $S^n$. To calculate the geodesic it remains to solve the 
optimization problem \eqref{eq.disSM}. 
In the case of $S^2 =\operatorname{SO}(3)/\operatorname{SO}(2)$ this is a minimization over the one-dimensional compact group $\operatorname{SO}(2)$. Thus we can use, as an alternative to the gradient method, an evaluation based method to find the optimal $y\in\operatorname{SO}(2)$, i.e., 
discretize the one-dimensional compact group by a finite number of points and find the optimal element of this discretization.
\end{remark}


\section{The homogeneous space of all positive definite symmetric matrices}\label{Appendix:B}
We now describe the space of $n\times n$ positive definite symmetric matrices  with determinant one equipped with a natural metric. The main complication, as compared to the case $M=S^n$, will be the lack of an explicit formula for the inverse exponential map. 

To view the set of all $n\times n$ positive definite symmetric matrices with determinant  one as a homogeneous space we start by considering the special linear group
\begin{equation}
	\operatorname{SL}(n, \mathbb{R}) = \{A\in GL(n,\mathbb R)\,|\, \det(A) = 1\}
\end{equation}
with corresponding Lie algebra
\begin{equation}
	\mathfrak{sl}(n, \mathbb R) = \{X\in M(n, \mathbb)\,|\,\operatorname{tr}(X) = 0\}
\end{equation}
Let $\operatorname{PDSM}_{n\times n}$ denote the space of real $n\times n$ positive definite symmetric matrices with unit determinant. By the polar decomposition theorem, each matrix $B\in \operatorname{SL}(n,\mathbb{R})$ has a unique expression as $B=PV$, where $P\in \operatorname{PDSM}_{n\times n}$ and $V\in \operatorname{SO}(n)$. We  define the map $$\psi: \operatorname{SL}(n,\mathbb{R})\to \operatorname{PDSM}_{n\times n}$$ by $\psi(B)=P$, where $B = PV$ is the polar decomposition of $B$. An explicit formula for $\psi$ is $\psi(B)=\sqrt{BB'}$, where $\sqrt{\phantom{B} }$ denotes the unique positive definite symmetric square root. Clearly $\psi$ induces a diffeomorphism $\operatorname{SL}(n,\mathbb{R})/\operatorname{SO}(n)\to \operatorname{PDSM}_{n\times n}$.
We will now define a Riemannian metric on these spaces: for $u, v\in T_g\operatorname{SL}(n, \mathbb R)$ with $g\in\operatorname{SL}(n, \mathbb R)$ let
\begin{equation}
	\langle u, v\rangle_g = \operatorname{tr}\left(g^{-1}u\; (g^{-1}v)^t\right),
\end{equation}
where $A^t$ denotes the transpose of a matrix $A$.
It is easy to verify that this is a left invariant metric on  $\operatorname{SL}(n, \mathbb{R})$, which is right invariant with respect to $\operatorname{SO}(n)$. The Riemannian exponential map at the identity is of the form $v\to \exp(v^t)\exp(v-v^t)$, where $\exp$ denotes the standard matrix exponential, see \cite[Theorem~2.14]{AnLaReVa2014}. However, to our knowledge there exists no explicit formula for the inverse Riemannian exponential map. 

In the following we will describe an algorithm to numerically approximate it:
denote by $\operatorname{Exp}: \mathfrak{sl}(n, \mathbb{R})\to\operatorname{SL}(n, \mathbb R)$ the Riemannian exponential map and by $\operatorname{Log}: \operatorname{SL}(n, \mathbb R)\to \mathfrak{sl}(n, \mathbb{R})$ the inverse Riemannian exponential map (both at the identity).  For every $B\in\operatorname{SL}(n, \mathbb{R})$ we  define the function $F: B\cdot \operatorname{SO}(n)\to B \cdot\operatorname{SO}(n)$ by
\begin{align}
	F(z)=z\exp(\log(z^t)-\log(z)).
\end{align}
Here $\exp$ (resp. $\log$) denotes the matrix exponential (resp. matrix logarithm), which can be computed easily.
The important property of the map $F$ is as follows:  if $F(z_0) = B$ for some $z_0\in B\cdot \operatorname{SO}(n)$, then $X = \log(z_0^t)$ is the inverse Riemannian exponential of $B$, that is, $X = \operatorname{Log}(B)$. 

To solve this problem numerically we need to calculate the differential of the function $F$ at $z$:
\begin{equation}
	F_{*z}: T_z(B\cdot\operatorname{SO}(n))\to T_{F(z)}(B\cdot\operatorname{SO}(n)).
\end{equation}
The tangent space of the orbit $B\cdot\operatorname{SO}(n)$ for $B\in\operatorname{SL}(n, \mathbb R)$ is isomorphic to the Lie algebra $\mathfrak{so}(n)$ by left translation. We use this identification to compute
\begin{equation}
	D_zF: \mathfrak{so}(n)\to T_z(B\cdot\operatorname{SO}(n))\to T_{F(z)}(B\cdot\operatorname{SO}(n))\to\mathfrak{so}(n),
\end{equation}
which  can be approximated using finite differences:
\begin{equation}
	D_zF(x_i)=\operatorname{Proj}_{\mathfrak{so}(n)}\left(F(z)^{-1}\dfrac{F(z\exp(\delta x_i))-F(z)}{\delta}\right),
\end{equation}
where $\delta>0$ and $\{x_i\}$ denotes a basis of $\mathfrak{so}(n)$. Thus we obtain the following algorithm for the computation of the inverse Riemannian exponential map:
\begin{enumerate}[(1)]
\item Given $B_0\in\operatorname{SL}(n, \mathbb{R})$, set $z=\sqrt{B_0B_0^t}.$
	\item Compute $v=\log({F(z)^{-1}B_0})$.
	\item If $\|v\|$ is small, then stop. Otherwise update $z$ by $z=z\exp{\epsilon x}$, where $x=D_zF^{-1}(v)$ and $\epsilon>0$ is a chosen step size, and go back to step 1.
	\item Let $X_0 = \operatorname{Log}(B_0)=\log(z^t)$.
\end{enumerate}
Note that we are assuming that $D_zF$ is invertible, which turned out to be true in all of our numerical examples, but which we do not know how to prove.
To calculate the horizontal lifts of a curve with values in $\operatorname{PDSM}_{n\times n}$ we will make use of the following lemma:
\begin{lemma}\label{lem.PDSM.effi}
	Given $B_1, B_2\in \operatorname{SL}(n, \mathbb{R})$, the element in the orbit $B_2\cdot \operatorname{SO}(n)$ that is closest to $B_1$ is of the form $B_1\sqrt{B_1^{-1}B_2(B_1^{-1}B_2)^t}$.
\end{lemma}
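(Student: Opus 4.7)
The plan is to combine the left-invariance of the metric on $\operatorname{SL}(n,\mathbb R)$, the polar decomposition, and the Riemannian submersion structure of $\psi:\operatorname{SL}(n,\mathbb R)\to\operatorname{PDSM}_{n\times n}$.

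First, since left translation by $B_1^{-1}$ is an isometry of $\operatorname{SL}(n,\mathbb R)$ that carries the orbit $B_2\cdot\operatorname{SO}(n)$ to $C\cdot\operatorname{SO}(n)$ with $C=B_1^{-1}B_2$, it suffices to find the closest point of $C\cdot\operatorname{SO}(n)$ to the identity $I$ and then translate the answer back by $B_1$. By the polar decomposition $C=PW$ with $P=\sqrt{CC^t}\in\operatorname{PDSM}_{n\times n}$ and $W\in\operatorname{SO}(n)$, the orbit is $C\cdot\operatorname{SO}(n)=P\cdot\operatorname{SO}(n)$, so every element has the form $PU$ for some $U\in\operatorname{SO}(n)$. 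Thus the task reduces to showing that $P$ itself is the closest such element to $I$; translating back by $B_1$ then yields the claimed formula $B_1 P = B_1\sqrt{(B_1^{-1}B_2)(B_1^{-1}B_2)^t}$.

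For the reduced claim, I would use that $\psi$ is a Riemannian submersion with fibres $g\cdot\operatorname{SO}(n)$, hence distance non-increasing, which gives
\[
d_{\operatorname{SL}}(I,PU)\ \geq\ d_{\operatorname{PDSM}}(\psi(I),\psi(PU))\ =\ d_{\operatorname{PDSM}}(I,P)
\]
for every $U\in\operatorname{SO}(n)$. In the opposite direction, I would consider the curve $\gamma(t)=\exp(t\log P)$ from $I$ to $P$. Since $\log P$ is symmetric and traceless, the Riemannian exponential formula $\operatorname{Exp}_I(v)=\exp(v^t)\exp(v-v^t)$ collapses to the matrix exponential, so $\gamma$ is the Riemannian geodesic generated by the initial velocity $\log P$; moreover its left-trivialised velocity $\log P$ is symmetric throughout, hence orthogonal to $\mathfrak{so}(n)$ and therefore horizontal.

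Because Riemannian submersions preserve the lengths of horizontal curves and send horizontal geodesics to geodesics, $\psi\circ\gamma$ is a geodesic from $I$ to $P$ in $\operatorname{PDSM}_{n\times n}$ of length $\mathrm{length}(\gamma)$. As $\operatorname{PDSM}_{n\times n}$ with the induced metric is a Hadamard manifold, every geodesic is globally distance-minimizing, so $d_{\operatorname{PDSM}}(I,P)=\mathrm{length}(\gamma)=d_{\operatorname{SL}}(I,P)$. Combining this identity with the submersion inequality above yields $d_{\operatorname{SL}}(I,PU)\geq d_{\operatorname{SL}}(I,P)$ for every $U\in\operatorname{SO}(n)$, which is exactly the reduced claim. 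The main obstacle is justifying the two structural ingredients just used — that $\gamma$ is genuinely a Riemannian geodesic in $\operatorname{SL}(n,\mathbb R)$, and that $\psi$ is a Riemannian submersion — but the first follows directly from the explicit exponential formula cited from~\cite{AnLaReVa2014}, and the second from the right-invariance of the chosen metric under $\operatorname{SO}(n)$.
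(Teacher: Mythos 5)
Your proof is correct, but the core optimality step takes a genuinely different route from the paper's. You share the paper's framing: reduce to $B_1=I$ by left invariance, use the polar decomposition, and observe that the Riemannian exponential $v\mapsto\exp(v^t)\exp(v-v^t)$ collapses to the matrix exponential on symmetric velocities. From there the paper argues by necessary conditions: a minimizing geodesic from $I$ to the compact orbit must meet it perpendicularly, hence is horizontal, hence has symmetric initial velocity, hence consists of positive definite symmetric matrices and must end at the unique such element $P=\sqrt{CC^t}$ of the orbit. You instead verify the candidate directly: the submersion inequality $d_{\operatorname{SL}}(I,PU)\geq d_{\operatorname{PDSM}}(I,P)$ bounds every competitor from below, and the explicit horizontal geodesic $t\mapsto\exp(t\log P)$, combined with the Hadamard property of $\operatorname{PDSM}_{n\times n}$, shows that $d_{\operatorname{SL}}(I,P)$ attains this bound. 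The trade-off is clear: the paper needs the existence of a minimizing geodesic to the orbit (completeness of $\operatorname{SL}(n,\mathbb{R})$ plus compactness of $\operatorname{SO}(n)$) and the first-variation perpendicularity argument, but no curvature input; you sidestep existence entirely but import the global fact that the quotient metric on $\operatorname{PDSM}_{n\times n}$ is that of a symmetric space of noncompact type, so that $\psi\circ\gamma$ is globally minimizing --- a standard fact, but one you should cite or justify rather than assert. A minor further difference: the paper's necessary-condition argument delivers uniqueness of the closest point (any minimizer must be the unique positive definite symmetric element of the orbit), whereas your direct verification only shows that $P$ attains the minimum; that suffices for the lifting algorithm, and you could recover uniqueness by noting that equality in the submersion inequality forces the competing minimizing geodesic to be horizontal, which feeds back into the paper's argument.
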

\begin{proof}
We first consider the case $B_1 = I$. A geodesic in $\operatorname{SL}(n,\mathbb{R})$ from $I$ to an element in $B_2\cdot \operatorname{SO}(n)$ that is perpendicular to the orbit $B_2\cdot \operatorname{SO}(n)$ will be perpendicular to all $\operatorname{SO}(n)$ orbits it encounters. In particular, it will be perpendicular to    $\operatorname{SO}(n)$ at $I$; thus, the corresponding tangent vector at $I$ is a symmetric matrix. Since the geodesic in $\operatorname{SL}(n,\mathbb{R})$ starting from $I$ with direction $u$ is of the form $\exp(tu^t)\exp(t(u-u^t))$, it follows that the shortest geodesic from $I$ to $B_2\cdot \operatorname{SO}(n)$ is of the form $t\to\exp(tu)$, where $u$ is a symmetric matrix in $\mathfrak{sl}(n, \mathbb{R})$.	Thus the whole geodesic consists of symmetric matrices.  As a consequence the matrix in $B_2\cdot \operatorname{SO}(n)$ that is closest to $I$ will be the unique symmetric matrix in the orbit $B_2\cdot \operatorname{SO}(n)$, that is precisely the symmetric matrix $P=\sqrt{B_2B_2^t}$ that appears in the polar decomposition of $B_2=PV$, where $P\in \operatorname{PDSM}_{n\times n}$ and $V\in \operatorname{SO}(n)$. Now let $B_1$ be any arbitrary element in $\operatorname{SL}(n, \mathbb{R})$. Since the metric on $\operatorname{SL}(n, \mathbb{R})$ is left invariant, we first find the element in the orbit $B_1^{-1}B_2\cdot\operatorname{SO}(n)$ that is closest to $I$, and then left translate it by $B_1$. Hence, the element in the orbit $B_2\cdot\operatorname{SO}(n)$ that is closest to $B_1$ is simply $B_1\sqrt{B_1^{-1}B_2(B_1^{-1}B_2)^t}$.
\end{proof}

We will now use the above lemma to find a discrete horizontal lift $\alpha$ of a (discrete) curve $\beta: I\to \operatorname{PDSM}_{n\times n}$. 
Suppose that we are given the values of $\beta(t)$ sampled at $N+1$ equidistant points of $I$, i.e., we are given $\{\beta(t_i)\}$ for $t_i = \frac{i}{N},\, i = 0,\ldots,N$. Let $\beta(t)$ 
be the piecewise geodesic connecting the points $\beta(t_i)$.
We aim to find points $\alpha(t_i)\in\operatorname{SL}(n, \mathbb R)$ such that
\begin{enumerate}[(a)]
	\item\label{PDSM.propertyLift(a)}
	$\beta(t)=\pi(\alpha(t))$ for all $t\in I$, where $\alpha(t)$ is the generalized PL curve connecting the points $\alpha(t_i)$;
	\item\label{PDSM.propertyLift(b)} $\alpha'(t)\perp\alpha(t)\mathfrak{so}(n)$ for all $t\in I$.
\end{enumerate}
We have the following algorithm to calculate this  horizontal lift $\alpha\in AC^{\perp}(I, \operatorname{SL}(n, \mathbb{R}))$:
\begin{enumerate}[(1)]
	\item Set $\alpha(0)=\beta(0)$,
	\item Given $\alpha(t_i)$, set $\alpha(t_{i+1}) = \alpha(t_i)\sqrt{\alpha(t_i)^{-1}\beta(t_{i+1})(\alpha(t_i)^{-1}\beta(t_{i+1}))^t}$.
\end{enumerate}
It is easy to see that $\alpha$ satisfies the first condition \eqref{PDSM.propertyLift(a)}. 
By Lemma \ref{lem.PDSM.effi} we choose $\alpha(t_{i+1})$ to be the element of the orbit
$\beta(t_{i+1})\cdot\operatorname{SO}(n)$ that is closest to $\alpha(t_i)$.
Thus the geodesic between $\alpha(t_i)$ and $\alpha(t_{i+1})$ is horizontal, i.e., it is perpendicular to the orbits with respect to these two elements. Thus the discrete form of the second condition \eqref{PDSM.propertyLift(b)} holds.\\

\begin{remark}
We have now described our method to calculate the horizontal lifts of curves in $\operatorname{PDSM}_{n\times n}$. To calculate the geodesic it remains to solve the 
optimization problem \eqref{eq.disSM}. 
In the case $n=2$, i.e, for  $K= \operatorname{SO}(2)$, $\mathbb H^2 \cong \operatorname{PDSM}_{2\times 2}$,  this is a minimization over a one-dimensional compact group. Thus we can use, as an alternative to the gradient method, an evaluation method to find the optimal $y\in\operatorname{SO}(2)$ of \eqref{eq.disSM}, i.e., 
discretize the one-dimensional compact group by a finite number of points and find the optimal element of this discretization.
\end{remark}

%
%

%


\bibliography{mybibfile}

\end{document}